\setlist[itemize]{noitemsep} 
\newcommand{%
  
  \import{./figs/}{.pdf_tex}
}[1]{%
  
  \import{./figs/}{#1.pdf_tex}
}
\tikzstyle{state}=[
\DeclareMathSymbol{\shortminus}{\mathbin}{AMSa}{"39}
\definecolor{bubbles}{rgb}{0.91, 1.0, 1.0}
\definecolor{aquamarine}{rgb}{0.5, 1.0, 0.83}
\definecolor{bubblegum}{rgb}{0.99, 0.76, 0.8}
\definecolor{bluebell}{rgb}{0.69, 0.69, 0.92}
\definecolor{dollarbill}{rgb}{0.72, 0.93, 0.6}
\newtheorem{theorem}{Theorem}
\newtheorem{remark}[theorem]{Remark}
\newtheorem{lemma}[theorem]{Lemma}
\newtheorem{proposition}[theorem]{Proposition}
\theoremstyle{definition}
\newenvironment{example}
{\pushQED{\qed}\examplex}
{\popQED\endexamplex}
\newcommand{\figref}[1]{Fig.~\ref{#1}}
\newcommand{\exref}[1]{Example~\ref{#1}}
\newcommand{\tabref}[1]{Table~\ref{#1}}
\newcommand{\thmref}[1]{Theorem~\ref{#1}}
\newcommand{\propref}[1]{Proposition~\ref{#1}}
\newcommand{\lemref}[1]{Lemma~\ref{#1}}
\newcommand{\secref}[1]{Section~\ref{#1}}
\newcommand{\pr}{\operatorname{pr}}
\newcommand{\Gal}{\operatorname{Gal}}
\newcommand{\DT}{\rm d\mathbb Z_2}
\newcommand{\tS}{{\rm t}S_3}
\newcommand{\nK}{{\rm n}K_4}
\newcommand{\Z}{\mathbb Z}
\newcommand{\Q}{\mathbb Q}
\newcommand{\C}{\mathbb C}
\newcommand{\bbP}{\mathbb P}
\newcommand{\res}{\operatorname{Res}}
\newcommand{\PH}{\mathit{PH}}
\newcommand{\calL}{\mathcal{L}}
\newcommand{\Pic}{\operatorname{Pic}}
\newcommand{\Tr}{\operatorname{Tr}}
\newcommand{\Hdr}{H_\mathrm{dR}}
\newcommand{\PHdr}{\PH_\mathrm{dR}}
\newcommand{\ud}{\mathrm{d}}
\def\st{\ \middle|\ }
\title{\bf Galois Groups of Symmetric Cubic Surfaces} 
\date{}
\author{Eric Pichon-Pharabod and Simon Telen}
\begin{document}

\maketitle

\begin{abstract}
\noindent The Galois group of a family of cubic surfaces is the monodromy group of the 27 lines of its generic fibre. 
We describe a method to compute this group for linear systems of cubic surfaces using certified numerical computations.
Applying this to all families which are invariant under the action of a subgroup of $S_5$, we find that the Galois group is often much smaller than the Weyl group $W(E_6)$. As a byproduct, we compute the discriminants of these~families.
Our method allows to compute the monodromy representation on homology of any family of generically smooth projective hypersurfaces. 
To illustrate this broader scope, we include computations for symmetric quartic surfaces. 
\end{abstract}

\section{Introduction}\label{sec:introduction}

A cubic surface $X \subset \mathbb{P}^3 = \mathbb{C}\mathbb{P}^3$ is defined by the vanishing of a quaternary cubic form $f$: 
\begin{align} \label{eq:f}
\begin{split}
 f & \, = \, z_0 \, x_0^3 + z_1 \, x_0^2x_1 + z_2 \, x_0^2 x_2 + z_3 \, x_0^2 x_3 + z_4 \, x_0x_1^2 + z_5 \, x_0x_1x_2 + z_6 \, x_0x_1x_3 + z_7 \, x_0x_2^2 \\
 & \, \, \, \, \, \, \, \, + z_8 \, x_0x_2x_3 + z_9 \, x_0x_3^2 + z_{10} \, x_1^3 + z_{11} \, x_1^2x_2 + z_{12} \, x_1^2x_3 + z_{13} \, x_1x_2^2 + z_{14} \, x_1 x_2 x_3 \\
 & \, \, \, \, \, \, \, \, + z_{15} \, x_1x_3^2 + z_{16} \, x_2^3 + z_{17} \, x_2^2x_3 + z_{18} \, x_2 x_3^2  + z_{19} \, x_3^3 \, \, \, \in \mathbb{C}[x_0,x_1,x_2,x_3] \setminus \{0\}.
 \end{split}
 \end{align}
We write $X_z = V(f) \subset \mathbb{P}^3$. The parameters $z_i, i = 0, \ldots, 19$ take complex values. The surface $X_z$ is smooth if and only if the partial derivatives of $f$ do not vanish simultaneously on $\mathbb{P}^3$: 
\begin{equation}  \label{eq:smooth} \Big  \{ x \in \mathbb{P}^3 \, : \, \frac{\partial f}{\partial x_0} \, = \, \frac{\partial f}{\partial x_1}  \, = \, \frac{\partial f}{\partial x_2}  \, = \, \frac{\partial f}{\partial x_3} \, = \, 0  \Big \} \, = \,  \emptyset. \end{equation}
This condition holds if and only if $z$ does not belong to the discriminant $\nabla$, a hypersurface in $\mathbb{P}^{19}$ of degree 32. Cayley \cite{cayley1849triple} and Salmon \cite{salmon1849triple} showed that every smooth cubic surface $X_z$, for $z \in \mathbb{P}^{19} \setminus \nabla$, contains 27 complex lines. This is a standard result in classical algebraic geometry. For a detailed discussion, the reader can consult Dolgachev's book \cite[Chapter 9]{dolgachev2012classical}. A standard illustration of this fact shows the 27 lines on the Clebsch surface, see Figure \ref{fig:clebsch}.

\begin{figure}[h]
\includegraphics[width=6cm]{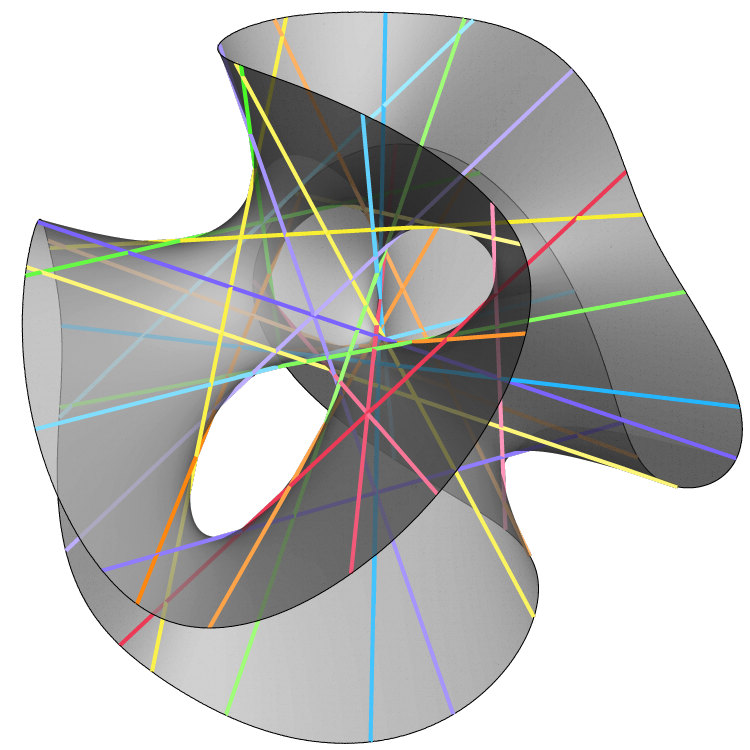}
\centering
\caption{The Clebsch surface and its twenty-seven (real) lines.}
\label{fig:clebsch}
\end{figure}

Fix a point $z \in \mathbb{P}^{19} \setminus \nabla$ and consider a continuous path $\gamma: [0,1] \rightarrow \mathbb{P}^{19} \setminus \nabla$ such that $\gamma(0) = \gamma(1) = z$. 
Such a path induces a permutation of the 27 lines on $X_{z}$ by varying the coefficients $z$ continuously along $\gamma$. 
The set of permutations obtained in this way forms the monodromy group or, equivalently, the Galois group of our family of cubic surfaces. Here ``Galois group'' refers to a finite extension of the field of rational functions on $\mathbb{P}^{19}$~\cite{harris1979galois,sottile2021galois}. 

It is well known that our Galois group is the Weyl group $W(E_6)$ of order 51\,840. This is a relatively small subgroup of the full symmetric group $S_{27}$, which respects incidences/symmetries between our lines. Indeed, $W(E_6)$ is the automorphism group of the incidence graph $(V,E)$ of the 27 lines. The vertices in $V$ are the lines and an edge in $E$ between two vertices indicates that the corresponding lines meet on $X$. The complement of $(V,E)$ is the Schl\"afli graph. 

Here we are interested in smaller families of cubic surfaces. 
Fix $n+1$ linearly independent cubic forms $h = (h_0, h_1, \ldots, h_n), \, h_i \in \mathbb{C}[x_0,x_1,x_2,x_3]_3$ and let ${\cal L}_h \simeq \mathbb{P}^n$ be the linear system generated by $h$. That is, a divisor $X_z \in {\cal L}_h$ corresponds to $z = (z_0: \ldots: z_n) \in \mathbb{P}^n$ via
\[ f \, = \,  z_0 \, h_0(x) + z_1 \, h_1(x) + \cdots + z_n \, h_n(x) \quad \text{ and } \quad  X_z \, =\,  V(f) \, \subset \, \mathbb{P}^3. \]
For instance, if $h = (x_0^3, x_0^2x_1, \ldots, x_3^3)$ consists of all 20 cubic monomials, then ${\cal L}_h \simeq \mathbb{P}^{19}$ is the complete linear system of cubic surfaces. We assume that $ {\cal L}_h \not \subseteq \nabla$, so that a generic element of ${\cal L}_h$ is smooth and $\nabla_h = {\cal L}_h \cap \nabla$ has dimension $n-1$. We are interested in the Galois group ${\rm Gal}({\cal L}_h) \subseteq W(E_6)$ of this family of cubic surfaces. That is, we want to compute the monodromy group generated by all permutations of the 27 lines induced by closed loops $\gamma: [0,1] \rightarrow {\cal L}_h \setminus \nabla_h$. For generic linear subsystems ${\cal L}_h \subseteq \mathbb{P}^{19}$ of dimension $n > 0$ we have ${\rm Gal}({\cal L}_h) = W(E_6)$. In that sense, cases with ${\rm Gal}({\cal L}_h) \subsetneq W(E_6)$ are particularly interesting. We will identify several such ${\cal L}_h$ arising as the invariant subspaces of finite group actions. 

Let $G$ be a group acting on $\mathbb{C}[x_0,x_1,x_2,x_3]_3$ and let $h_G = (h_0, \ldots, h_n)$ be a basis for the invariant subspace $\mathbb{C}[x_0,x_1,x_2,x_3]_3^G = \{f \in \mathbb{C}[x_0,x_1,x_2,x_3]_3 \, : \, g \cdot f = f \text{ for all } g \in G \}$. 
We shall write ${\cal L}_G = {\cal L}_{h_G}$ and $\nabla_G = \nabla_{h_G}$ to simplify notation. The symmetric group $S_5$ acts naturally on Sylvester pentahedral normal forms for general cubic surfaces \cite[\S84]{Segre_1942}
\begin{equation}\label{eq:pentahedral}
 a_0 \, y_0^3 +   a_1 \, y_1^3+ a_2 \, y_2^3+ a_3 \, y_3^3+ a_4 \, y_4^3 \, = \, y_0 + y_1 + y_2 + y_3 +y_4 \, = \, 0
\end{equation}
by permuting coordinates.
To descend this to an action on $\C[x_0, x_1, x_2, x_3]_3$, we choose coordinates on the hyperplane $y_0 + y_1 + y_2 + y_3 +y_4=0$, namely $[y_0:y_1:y_2:y_3]$.

\begin{example}
The double transposition $\tau=(15)(23)\in S_5$ acts on $\C[x_0, x_1,x_2,x_3]$ by
\[ x_0 \mapsto  -x_0-x_1-x_2-x_3, \quad  x_1 \mapsto x_2, \quad x_2 \mapsto x_1, \quad  x_3 \mapsto x_3 . \]
The space of cubic polynomials $\C[x_0,x_1,x_2,x_3]_3^{\langle\tau\rangle}$ invariant under the action of $\tau$ has projective dimension $\dim {\cal L}_{\langle \tau \rangle} = 9$. Writing $x_+ = x_0+ x_1 + x_2 + x_3$, a basis for $h_{\langle \tau\rangle}$ is given by
\begin{equation*}
\begin{gathered}
x_2^3 + x_3^3\,,\quad
-x_+^3 + x_0^3\,,\quad
x_0x_2^2 -x_+x_3^2\,\quad
x_0x_3^2 -x_+x_2^2\,,\quad
x_2x_0^2 + x_3x_+^2\,,\\
x_3x_0^2 + x_2x_+^2\,,\quad
x_2x_3(x_0-x_+)\,,\quad
x_+x_0(x_2+x_3)\,,\quad
x_2x_3(x_2+x_3)\,,\quad
x_+x_0(x_0-x_+)\,.
\end{gathered} \qedhere
\end{equation*}
\end{example}

The leading objective of the present text is the computation of the Galois groups of lines lying on surfaces in ${\cal L}_G$ for $G$ a subgroup of $S_5$, which we summarise in the following theorem.
\begin{theorem} \label{thm:main}
The Galois group ${\rm Gal}({\cal L}_G)$ of the $G$-invariant quaternary cubics, for $G$ any subgroup of $S_5$, is as listed in \tabref{tab:main}. The table shows the dimension $n$ of ${\cal L}_G \simeq \mathbb{P}^n$, the name and order of the Galois group ${\rm Gal}({\cal L}_G)$, and the degrees of the components of $\nabla_G$. 
\end{theorem}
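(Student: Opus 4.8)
The plan is to reduce the computation of each Galois group to a finite numerical monodromy computation, carried out with certified arithmetic, and to verify the output against the group-theoretic constraints available from the $W(E_6)$-action. Concretely, for each subgroup $G \subseteq S_5$ (up to conjugacy, since conjugate subgroups give linearly equivalent systems and hence isomorphic Galois data), I would first compute a basis $h_G$ of $\mathbb{C}[x_0,x_1,x_2,x_3]_3^G$ by averaging monomials over $G$, as illustrated in the example with $\tau = (15)(23)$; this determines $n = \dim {\cal L}_G$. Next I would pick a generic basepoint $z^{\star} \in {\cal L}_G \setminus \nabla_G$, solve for the 27 lines on $X_{z^{\star}}$ (e.g. via the classical parametrisation of lines by pairs of points, or by the Cayley--Salmon equations), and fix an explicit labelling compatible with the incidence graph $(V,E)$.

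The core step is the generation of loops. I would intersect ${\cal L}_G$ with generic lines (or low-degree rational curves) through $z^{\star}$, restrict the discriminant $\nabla$ to such a line to get a univariate polynomial whose roots are the branch points, and then track the 27 lines numerically along small loops encircling each branch point. Certification enters in two places: (i) certifying that the tracked endpoints are the correct permutation of the starting 27 lines — via interval Newton / Krawczyk tests ensuring each endpoint lies in a box containing a unique solution — and (ii) certifying that the sampled loops generate the full monodromy group, which one gets for free once the group generated stabilises and matches an abstract subgroup of $\mathrm{Aut}(V,E) = W(E_6)$ of the expected order; adding more loops cannot shrink it, and a separate argument (genericity of the chosen pencils, or a dimension count on $\nabla_G$) shows no monodromy is missed. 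As a byproduct, factoring the restricted discriminant polynomial and matching multiplicities with the local monodromy types yields the degrees of the irreducible components of $\nabla_G$.

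Finally I would identify each resulting permutation group: compute its order, and match it against the known subgroups of $W(E_6)$ (using a computer algebra system), thereby filling in the ``name and order'' column of \tabref{tab:main}. For the small cases one can cross-check by hand — e.g. when ${\cal L}_G$ is a pencil ($n=1$) the Galois group is cyclic generated by the product of local monodromies, and when $n$ is large and $G$ is trivial or very small one expects to recover all of $W(E_6)$, consistent with the genericity statement in the text.

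The main obstacle I expect is step (ii): rigorously certifying that the finite set of sampled loops generates the \emph{entire} monodromy group rather than a proper subgroup. Numerically one observes stabilisation, but turning this into a proof requires either a Zariski-density / transversality argument for the family of pencils used, or an a priori upper bound on ${\rm Gal}({\cal L}_G)$ (from the $G$-symmetry of the 27 lines, which constrains the monodromy to a subgroup of the centraliser-type structure inside $W(E_6)$) that happens to coincide with the computed lower bound. Getting these two bounds to meet for every one of the subgroups of $S_5$ — including the borderline cases where ${\rm Gal}({\cal L}_G) \subsetneq W(E_6)$ — is the delicate part; the numerics and certification of individual permutations are comparatively routine.
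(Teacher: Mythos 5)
Your proposal differs from the paper in the core numerical engine: you track the 27 lines themselves along loops with certified homotopy continuation, whereas the paper never solves for the lines at all. It computes the certified period matrix of the base fibre (the Clebsch surface), analytically continues the periods along the loops using the Picard--Fuchs equations coming from the Gauss--Manin connection, recovers the integral monodromy matrix on $H_2(X_b)$ exactly from the fact that $\Pi^{-1}\tilde\Pi$ is integral once the certified error is below $1/2$, and then reads off the permutation because each line is isolated in its homology class ($L^2=-1$, $L\cdot H=1$). Both engines are viable for cubic surfaces --- the paper itself notes that modern certified path tracking would have comparable efficiency here --- so this part of your plan is an acceptable alternative route, though it does not extend to the K3 computations in the paper.

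The genuine gap is exactly the one you flag at the end and do not close: certifying that your finite set of loops generates all of $\pi_1({\cal L}_G\setminus\nabla_G,b)$. The paper resolves this with Theorem~\ref{thm:lefschetz_zariski}: for a line $L$ through $b$ that misses the singular locus of $\nabla_G$ and is nowhere tangent to it, the map $\pi_1(L\setminus\nabla_G,b)\to\pi_1({\cal L}_G\setminus\nabla_G,b)$ is surjective, so simple loops around the points of $L\cap\nabla_G$ suffice; and the genericity of $L$ is itself certified by computing the defining equation, hence the degree, of $\nabla_G$ (the elimination, projective-duality and interpolation machinery of Section~\ref{sec:discriminants}) and checking that $L\cap\nabla_G$ consists of $\deg\nabla_G$ distinct points. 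Your two proposed substitutes do not work: observed stabilisation of the generated group is not a proof, and the a priori upper bound from the $G$-symmetry (the centraliser of $\varphi(G)$ in $W(E_6)$) does not in general coincide with ${\rm Gal}({\cal L}_G)$ --- e.g.\ for $G=D_5$ the Galois group is trivial, and for most rows of Table~\ref{tab:main} the two bounds would not meet, so the sandwich argument collapses. Note also that ``restricting the discriminant to a line'' presupposes knowledge of $\nabla_G$: without its degree you cannot certify that the branch points you located are all of them, and the $\Q$-factorisation of the univariate restriction need not reflect the component structure of $\nabla_G$, so your byproduct claim about $\deg\nabla_G$ is unfounded. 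Finally, your cross-check for pencils is incorrect: for $n=1$ the Galois group is generated by all the local monodromies around the points of $\nabla_G\cap\mathbb{P}^1$, not by their product, and it need not be cyclic.
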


\begin{table}[ht]
\footnotesize
\centering
\begin{tabular}{ccccc}
\toprule
$G$ & $\dim {\cal L}_G$ & ${\rm Gal}({\cal L}_G)$ & $|{\rm Gal}({\cal L}_G)|$ & $\deg \nabla_G$ \\ \midrule
  $S_4$  &  2                &    $\Z_2^2$      &          $4$           &   $1^3,3^1$   \\
  $A_4$  &  2                &    $\Z_2^2$        &           $4$          &   $1^3,3^1$         \\
  $D_6$  & 3                  &            $D_6$       &            $12$            &    $1^3,3^1$        \\
  $D_5$  & 1                 &               $\Z_1$    &           $1$            &   $2^1$         \\
  $D_4$  &  3                &    $\Z_2^3$        &            $8$         &   $1^4,3^1$         \\
  $S_3$  &  6                &        $S_3^2$   &               $36$         &   $1^1,4^1,8^1$        \\
  $\tS$  &  3       &            $D_6$      &            $12$           &  $1^3, 3^1$          \\
  $\Z_6$  & 4                  &        $\Z_6\times S_3$        &            $36$          &    $1^2,2^1,3^1$        \\ \bottomrule
\end{tabular}
\quad 
\begin{tabular}{ccccc}
\toprule
$G$ & $\dim {\cal L}_G$ & ${\rm Gal}({\cal L}_G)$ & $|{\rm Gal}({\cal L}_G)|$ & $\deg \nabla_G$ \\ \midrule
  $\Z_5$  & 3                 &               $\Z_5$  &          $5$      &   $4^1$         \\
  $K_4$  &  4                &    $\Z_2^4$      &              $16$       &   $1^5, 3^1$         \\
  $\nK$  & 7     &      $\Z_2^2\times S_4$     &         $96$           &     $2^1, 4^3$       \\
  $\Z_4$  &   4              &       $\Z_2^2\times \Z_4$      &      $16$          &    $1^3, 2^1, 3^1$        \\
  $\Z_3$  &   7              &      $\Z_3\times S_3^2$    &         $108$           &    $2^1,4^1,8^1$        \\
  $\Z_2$  &   12              &         $W(F_4)$       &      $1152$       &   $10^1, 12^1$         \\
  ${\DT}$  &  9          &       $D_4\times S_4$       &      $192$        &  $4^2,8^1$          \\
  $\Z_1$  & 19                &      $W(E_6)$      &       $51840$      &   $32^1$         \\\bottomrule
\end{tabular}
\caption{Galois groups and discriminant degrees of $G$-invariant cubic surfaces.}
\label{tab:main}
\end{table}

Subgroups $G \subseteq S_5$ not appearing in \tabref{tab:main} have $\dim {\cal L}_G = 0$. Details about all considered groups are given in Section \ref{sec:results}. The notation in the table is as follows.
In the column labeled ${\rm Gal}({\cal L}_G)$, the isomorphism classes of the Galois groups are given in terms of symmetric groups on $d$ elements ($S_d$) cyclic groups of order $n$ ($\Z_n$), dihedral groups of the $n$-gon ($D_n$) and Weyl groups of Lie groups $L$ ($W(L)$).
In the column labeled $\deg \nabla_G$, the string $1^2,2^1,3^1$ means that the discriminant $\nabla_G$, viewed as a reduced subscheme of ${\cal L}_G \simeq \mathbb{P}^n$ over $\Q$, has two components of degree one, one component of degree two and one component of degree three.

Our contribution is a symbolic-numerical method for computing ${\rm Gal}({\cal L}_h)$ and $\nabla_h$ given a linear system $h$ of cubic surfaces. The results of our computations are certified, thus leading to a computational proof of \thmref{thm:main}. 
In fact, our method is not restricted to cubic surfaces. 
It can compute monodromy groups of families of generically smooth hypersurfaces. To illustrate this, we have also included results for quartic surfaces in Section \ref{sec:quartic}.

As indicated above, the study of smooth cubic surfaces and their 27 lines is classical. In particular, the entry $G = \Z_1 = \{ 1\}$ of \tabref{tab:main} is well known. At the same time, cubic surfaces remain an active area of research, see for instance \cite{ranestad2020twentyseven}. In fact, this project was motivated by the recent work \cite{BrazeltonRaman_2024}, in which the authors show that the Galois group ${\rm Gal}({\cal L}_{S_4})$ is the Klein group $K_4 \subsetneq W(E_6)$ \cite[Theorem 1.2]{BrazeltonRaman_2024}. 
Furthermore, also motivated by \cite{BrazeltonRaman_2024}, the recent work \cite{Landi_2025} extended this computation to three other symmetric families while we were writing this paper.
Hence, also the entries $G = S_4, S_3, D_6$ and $\Z_2$ in \tabref{tab:main} are not new.
An earlier computation of the monodromy groups of conics lying on a quartic surface with a certain symmetry was carried out by Bouyer in \cite{Bouyer_2020} --- a result that we also recover in \secref{sec:quartic}.
We also reproduce results of \cite{ MedranoMartinDelCampo_2022a,MedranoMartinDelCampo_2022} for monodromy groups of certain families of cubic and quartic surfaces.
A punchline of this paper is that with modern techniques from certified numerical algebraic geometry, such results can be proved computationally. This helps to identify interesting cases with extra symmetry. 
One can then try to find theoretical arguments leading to an alternative proof or more geometric insight. 

Computing Galois groups using numerical homotopy continuation is the topic of \cite{hauenstein2018numerical,leykin2009galois}. 
In particular, \cite[Section 3.1.4]{hauenstein2018numerical} recovers the Galois group $W(E_6)$ of the 27 lines. 
The authors propose to compute the generators of the Galois group by numerical path tracking along a set of generators of the fundamental group of ${\cal L}_h \setminus \nabla_h$ (in our notation). 
Certifying this computation requires certified path tracking and a certified witness set (in the sense of \cite{hauenstein2018numerical}) for the discriminant hypersurface $\nabla_g$. 
In terms of performance, discussions with the authors of \cite{GuillemotLairez_2024} made us aware that the computations of monodromy groups of lines of the cubic surfaces for the examples we consider in this paper are within the reach of recent improvements of certified path tracking methods and implementations, with comparable efficiency.
Our method differs from path-tracking as it rests on period computations and differential tools using Picard-Fuchs equations instead and uses semi-numerical algorithms introduced in \cite{LairezEtAl_2024}.  
We still rely on certifying witness sets of $\nabla_g$, which we do by computing its defining equation.
This gives more information than necessary, but this information is independently interesting. 

The scope of our method is different from that of certified homotopy continuation.
Namely, we compute the action of monodromy on homology groups.
For cubic surfaces this information is equivalent to the knowledge of the permutations of the finite set of lines that lie on the surface.
In higher degrees and dimensions there are however differences.
For instance, lines contained in algebraic varieties may not be isolated in their homology class. It is known that there are 2\,875 lines on generic threefolds of degree five \cite{Schubert_1886}, but their second homology group has rank~$1$.
There is therefore no hope of recovering the action of monodromy on the lines from the action of monodromy on the second homology group.
Conversely, the action of monodromy on enumerative data of a variety may be insufficient to recover the action of monodromy on the homology. 
Indeed, a first obstruction is that the monodromy group may be infinite.
This is the case for families of K3 surfaces, to which our methods are applied in \secref{sec:quartic}.
There, we recover in particular the action of monodromy on the finite number of conics generically lying on these surfaces. One could set up a system of parametric polynomial equations whose solutions are these conics. In our experience, recovering the monodromy group via certified path tracking on such a parametric system is unfeasible with state-of-the-art implementations such as  \cite{GuillemotLairez_2024}. 

Our paper is outlined as follows. 
Section \ref{sec:results} presents the results summarised in Theorem \ref{thm:main} in more detail. 
Section \ref{sec:lines} contains preliminaries on cubic surfaces, and details on how we compute a generating set of the fundamental group $\pi_1(\mathbb P^n\setminus\nabla)$. 
Section \ref{sec:monodromy} explains our algorithm for determining the monodromy action on the 27 lines for a given linear system ${\cal L}_h$. 
For this to certifiably compute the full Galois group, we assume that the intersection of the discriminant hypersurface $\nabla_h \subset {\cal L}_h$ with a generic line is known. 
Section \ref{sec:discriminants} explains how to compute the defining equation of $\nabla_h$, and therewith certify that a candidate \emph{generic} line is indeed sufficiently generic to apply Lefschetz' theorem (Theorem \ref{thm:lefschetz_zariski}). 
In Section \ref{sec:crystal}, we discuss Galois groups under crystallographic symmetries. 
In Section \ref{sec:quartic}, we discuss monodromy groups of certain families of K3 surfaces with symmetries, and compute the Galois groups of configurations of curves on these surfaces.
Our code and data can be downloaded at \cite{code}.

\section*{Acknowledgements}
The problem of computing the action of monodromy on the lines of cubic surfaces through period computations was first suggested to the first author by Duco van Straten. 
Duco van Straten also made the first author aware of \cite{BrazeltonRaman_2024}, which was the main motivation for carrying out this project.
We thank Marc Mezzarobba for kindly sharing code and for helpful e-mail correspondence, which made the computations of Sections \ref{sec:quartic} and \ref{sec:crystallographic_quartic} possible.
We are additionally grateful to Philip Candelas, Alice Garbagnati, Alexandre Guillemot, Pierre Lairez and Rafael Mohr for enlightening discussions and helpful comments.
This work has made use of the Infinity Cluster hosted by Institut d'Astrophysique de Paris for the computations of \secref{sec:crystallographic_quartic}. We thank Stephane Rouberol for his help with running our code on the cluster.

\section{Subgroups of $S_5$ acting on cubic surfaces} \label{sec:results}

This section gives more details regarding the groups appearing in Theorem \ref{thm:main} and Table \ref{tab:main}.
We consider cubic surfaces whose defining equation is invariant under the linear automorphisms induced by conjugacy classes of subgroups of $S_5$.
There are 19 such conjugacy classes. They form a poset with respect to inclusion as depicted in the left part of  \figref{fig:tower_inclusion}.

\begin{figure}[h]
\hspace{-0.3cm}\makebox[\textwidth][c]{
\begin{subfigure}{.45\textwidth}
    \centering
    \begin{tikzpicture}[scale=0.58, transform shape]
\begin{scope}
   \path [fill=gray!25] plot [smooth cycle, tension=.5] coordinates {(2,-2.5)  (0,-4.5) (1,-6) (-.5, -8) (2,-10.5) (9,-6)};
\end{scope}

\begin{scope}[every node/.style={rectangle,thick}]
    \node (S5) at (2,-1) {$S_5$};
    \node (A5) at (-2,-3) {$A_5$};
    \node (D6) at (0,-3) {$D_6$};
    \node (S4) at (2,-3) {$S_4$};
    \node (F5) at (6,-3) {$F_5$};
    \node (A4) at (.5,-4.5) {$A_4$} ;
    \node (D4) at (6,-4.5) {$D_4$} ;
    \node (Z6) at (-4,-6) {$\Z_6$} ;
    \node (tS3) at (-2,-6) {$\tS$} ;
    \node (S3) at (2,-6) {$S_3$} ;
    \node (D5) at (0,-6) {$D_5$} ;
    \node (K4) at (4,-6) {$K_4$} ;
    \node (nK4) at (6,-6) {$\nK$} ;
    \node (Z4) at (8,-6) {$\Z_4$} ;
    \node (Z5) at (-2,-8) {$\Z_5$} ;
    \node (Z3) at (.5,-8) {$\Z_3$} ;
    \node (Z2) at (3,-8) {$\Z_2$} ;
    \node (DT) at (6,-8) {$\DT$} ;
    \node (Z1) at (2,-10) {$\Z_1$} ;
\end{scope}

\begin{scope}[>={Stealth[black]},
              every node/.style={fill=white,circle},
              every edge/.style={draw=black}]
\path [->] (A5) edge (S5);
\path [->] (S4) edge (S5);
\path [->] (F5) edge (S5);
\path [->] (D6) edge (S5);
\path [->] (A4) edge (A5);
\path [->] (tS3) edge (A5);
\path [->] (D5) edge (A5);
\path [->] (S3) edge (S4);
\path [->] (A4) edge (S4);
\path [->] (D4) edge (S4);
\path [->] (D5) edge (F5);
\path [->] (Z4) edge (F5);
\path [->] (Z6) edge (D6);
\path [->] (tS3) edge (D6);
\path [->] (S3) edge (D6);
\path [->] (nK4) edge (D6);
\path [->] (Z3) edge (A4);
\path [->] (K4) edge (A4);
\path [->] (K4) edge (D4);
\path [->] (nK4) edge (D4);
\path [->] (Z4) edge (D4);
\path [->] (Z2) edge (Z6);
\path [->] (Z3) edge (Z6);
\path [->] (Z3) edge (tS3);
\path [->] (DT) edge (tS3);
\path [->] (Z3) edge (S3);
\path [->] (Z2) edge (S3);
\path [->] (Z5) edge (D5);
\path [->] (DT) edge (D5);
\path [->] (Z2) edge (nK4);
\path [->] (DT) edge (nK4);
\path [->] (DT) edge (K4);
\path [->] (DT) edge (Z4);
\path [->] (Z1) edge (Z5);
\path [->] (Z1) edge (Z3);
\path [->] (Z1) edge (Z2);
\path [->] (Z1) edge (DT);

\end{scope}
\end{tikzpicture}

\end{subfigure}
\quad \, 
    \begin{subfigure}{.45\textwidth}
    \centering
    \begin{tikzpicture}[scale=0.58, transform shape]

\begin{scope}
   \path [fill=gray!25] plot [smooth cycle, tension=.5] coordinates {(2,-2.5)  (0,-4.5) (1,-6) (-.5, -8) (2,-10.5) (9,-6)};
\end{scope}

\begin{scope}[every node/.style={rectangle,thick}]
    \node (S5) at (2,-1) {$\Z_1$};
    \node (A5) at (-2,-3) {$\Z_1$};
    \node (F5) at (6,-3) {$\Z_1$};
    \node (S4) at (2,-3) {$\Z_2^2$};
    \node (A4) at (.5,-4.5) {$\Z_2^2$} ;
    \node (D6) at (0,-3) {$D_6$};
    \node (D5) at (0,-6) {$\Z_1$} ;
    \node (D4) at (6,-4.5) {$\Z_2^3$} ;
    \node (S3) at (2,-6) {$S_3^2$} ;
    \node (tS3) at (-2,-6) {$D_6$} ;
    \node (Z6) at (-4,-6) {$\Z_6\times S_3$} ;
    \node (Z5) at (-2,-8) {$\Z_5$} ;
    \node (K4) at (4,-6) {$\Z_2^4$} ;
    \node (nK4) at (6,-6) {$\Z_2^2\times S_4$} ;
    \node (Z4) at (8,-6) {$\Z_2^2\times \Z_4$} ;
    \node (Z3) at (.5,-8) {$\Z_3\times S_3^2$} ;
    \node (Z2) at (3,-8) {$W(F_4)$} ;
    \node (DT) at (6,-8) {$D_4\times S_4$} ;
    \node (Z1) at (2,-10) {$W(E_6)$} ;
\end{scope}

\begin{scope}[>={Stealth[black]},
              every node/.style={fill=white,circle},
              every edge/.style={draw=black}]
    \path [->] (S5) edge (A5);
    \path [->] (S5) edge (S4);
    \path [->] (S5) edge (F5);
    \path [->] (S5) edge (D6);
    \path [->] (A5) edge (A4);
    \path [->] (A5) edge (tS3);
    \path [->] (A5) edge (D5);
    \path [->] (S4) edge (S3);
    \path [->] (S4) edge (A4);
    \path [->] (S4) edge (D4);
    \path [->] (F5) edge (D5);
    \path [->] (F5) edge (Z4);
    \path [->] (D6) edge (Z6);
    \path [->] (D6) edge (tS3);
    \path [->] (D6) edge (S3);
    \path [->] (D6) edge (nK4);
    \path [->] (A4) edge (Z3);
    \path [->] (A4) edge (K4);
    \path [->] (D4) edge (K4);
    \path [->] (D4) edge (nK4);
    \path [->] (D4) edge (Z4);
    \path [->] (Z6) edge (Z2);
    \path [->] (Z6) edge (Z3);
    \path [->] (tS3) edge (Z3);
    \path [->] (tS3) edge (DT);
    \path [->] (S3) edge (Z3);
    \path [->] (S3) edge (Z2);
    \path [->] (D5) edge (Z5);
    \path [->] (D5) edge (DT);
    \path [->] (nK4) edge (Z2);
    \path [->] (nK4) edge (DT);
    \path [->] (K4) edge (DT);
    \path [->] (Z4) edge (DT);
    \path [->] (Z5) edge (Z1);
    \path [->] (Z3) edge (Z1);
    \path [->] (Z2) edge (Z1);
    \path [->] (DT) edge (Z1);
\end{scope}
\end{tikzpicture}
\end{subfigure}}

\caption{\textit{(left)} The poset of inclusions satisfied by the conjugacy classes of the 19 subgroups of $S_5$. Subgroups of $S_4$ are shaded in grey. \textit{(right)} The correponding reversed poset of inclusions satisfied by the Galois groups of $\mathcal L_G$ for the subgroups $G$ of $S_5$.}
\label{fig:tower_inclusion}
\end{figure}
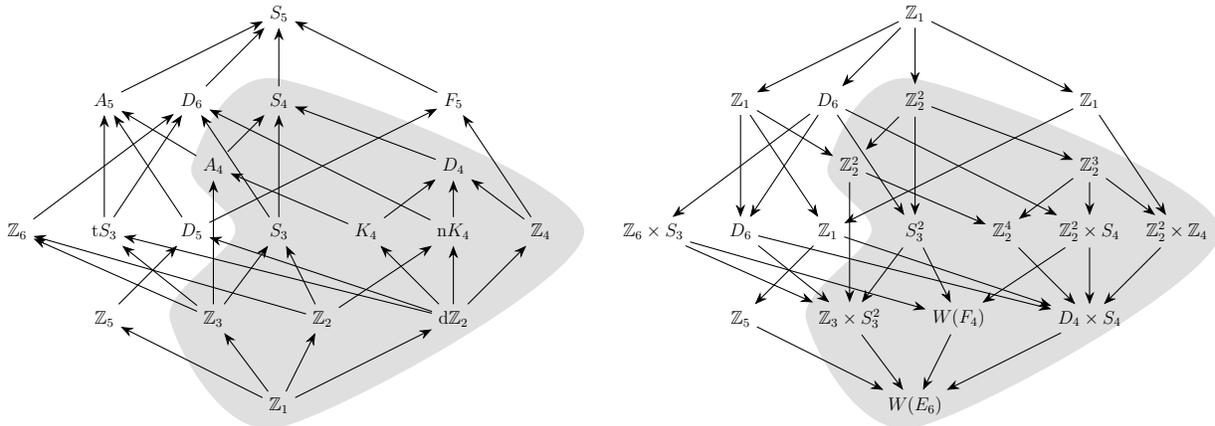

Concrete realisations of these groups are given in \tabref{tab:subgroups_S5}. 
Note that some groups, like $\DT$ and $\Z_2$, are isomorphic as abstract groups, but not conjugate in $S_5$.
When discussing (conjugacy classes of) subgroups in $S_5$, we will use the notation of \tabref{tab:subgroups_S5} to designate these groups. For example, $\Z_2$ will never be used to mean $\DT$.
Finally, note that two subgroups of $S_4$ are conjugates of each other in $S_4$ if and only if they are conjugates in $S_5$.
\begin{table}[h]
\footnotesize
\centering
\begin{tabular}{cccccc}
\toprule
\multirow{2}{*}{Notation} & \multirow{2}{*}{$G$} & \multirow{2}{*}{$|G|$} & \multicolumn{2}{c}{Multiplicity } & \multirow{2}{*}{Notes}\\ 
 &  &   & in $S_4$&in $S_5$ & \\ 
\midrule
$S_5$ & $\langle (14532),(12)\rangle$   &                    120      &   -  &     1           \\
$A_5$ & $\langle (14532),(123)\rangle$   &                    60     &   -  &    1           \\
$F_5$ & $\langle (12345),(12)\rangle$   &                    20      &     -  &   6        & Frobenius group   \\
$D_6$ & $\langle (123), (12),(45)\rangle$   &               12      &     -  &   10           \\
$D_5$ & $\langle (14532),(12)(34)\rangle$   &              10       &   -  &     6           \\
$\tS$ & $\langle (125),(12)(34)\rangle$   &                  6      &     -  &   10      & twisted $S_3$     \\
$\Z_6$ & $\langle (123),(45)\rangle$   &                         6      &     -  &   10           \\
$\Z_5$ & $\langle (14532)\rangle$   &                             5     &     -  &       6          \\\midrule
$S_4$ & $\langle (1234),(12)\rangle$   &                    24      &    1  &    5           \\
$A_4$ & $\langle (123),(12)(34)\rangle$   &                12      &   1   &    5           \\
$D_4$ & $\langle (1324),(12)\rangle$   &                    8      &    3    &  15           \\
$S_3$ & $\langle (123),(12)\rangle$   &                       6      &    4  &    10           \\
$K_4$ & $\langle (12)(34),(13)(24)\rangle$   &           4      &   1   &    5          & normal Klein four-group \\
$\nK$ & $\langle (12),(34)\rangle$   &                      4      &   3   &    15       & non-normal Klein four-group    \\
$\Z_4$ & $\langle (1324) \rangle$   &                         4      &    3  &    15           \\
$\Z_3$ & $\langle (123)\rangle$   &                           3      &    4  &    10           \\
$\Z_2$ & $\langle (12)\rangle$   &                           2      &    6   &   10          \\
$\DT$ & $\langle (12)(34)\rangle$   &                       2      &   3  &     15     & double transposition      \\
$\Z_1$ & $\langle\operatorname{id}\rangle$   &      1      &   1   &   1           \\\bottomrule
\end{tabular}
\caption{List of all the nineteen subgroups $G$ of $S_5$ up to conjugation. The multiplicity is the number of conjugate subgroups. The permutations act on the coordinates of the cubic surface in pentahedral normal form, see \eqref{eq:pentahedral}.}
\label{tab:subgroups_S5}
\end{table}

\begin{remark}
The realisations of the conjugacy classes of subgroups of $S_4$ given in \tabref{tab:subgroups_S5} respect the inclusion relations shown in Figure \ref{fig:tower_inclusion} (left). In other words, for each pair of groups $G_1, G_2< S_4$ of \tabref{tab:subgroups_S5}, we have that $G_1<G_2$ if (and only if) a conjugate (as a subgroup of $S_4$) of $G_1$ lies in $G_2$. 
It is not possible to do this for all subgroups of $S_5$ as $\Z_6$ contains $\Z_2$ and $\Z_3$ with disjoint support in $S_5$, and this is not realisable in $S_3$. 
\end{remark}

The action of monodromy in these spaces is given in \tabref{tab:main}. 
These monodromy groups are not independent of each other.
Indeed, if $G$ is a group acting on $\mathbb{C}[x_0,x_1,x_2,x_3]_3$ and $G'<G$, then $\mathcal L_{G}\subset \mathcal L_{G'}$ and thus the Galois group $\Gal(\mathcal L_{G'})$ can be seen as a subgroup of $\Gal(\mathcal L_G)$.
In other words, the maps $G \mapsto \mathcal{L}_G$ and $G \mapsto {\rm Gal}(\mathcal{L}_G)$ are inclusion reversing, see \figref{fig:tower_inclusion} (right).
This implies in particular that $\mathcal L_{D_6} = \mathcal L_{\tS}$, $\mathcal L_{S_4} = \mathcal L_{A_4}$ and $\mathcal L_{S_5} = \mathcal L_{A_5} =\mathcal L_{F_5}$ as these spaces have the same dimension. \emph{A fortiori}, their monodromy groups are the same.

Up to scale, there is a single invariant cubic polynomial for the action of $S_5$, namely:
\begin{equation}\label{eq:clebsch}
f_0 \, = \,  x_0^3 + x_1^3 + x_2^3 + x_3^3 - (x_0 + x_1 + x_2 + x_3)^3\,.
\end{equation}
We can therefore use the corresponding (smooth) cubic surface $X_0 := V(f_0)$ as a base fibre for all our monodromy computations for subgroups of $S_5$. We express all the monodromy groups as subgroups of $W(E_6)\subset S_{27}$ acting on the 27 lines of $X_0$. The surface $X_0$ is called the \emph{Clebsch surface} \cite[\S9.5.4]{dolgachev2012classical}. Among its distinguishing features are the fact that all 27 lines are defined over the real numbers, and $X_0$ has the maximal number of 10 \emph{Eckardt points}, i.e., points on $X$ where three of the 27 lines meet. The Clebsch surface is depicted in \figref{fig:clebsch}. 

Since the group $G$ acts by linear automorphisms, it also induces an action on the 27 lines of a $G$-invariant cubic surface $X$. This induces a map 
\begin{equation}
\varphi\colon G\to W(E_6)\subset S_{27}\,.
\end{equation}
In \secref{appendix:permutations_symmetric}, for each group $G$ in \tabref{tab:main}, we list the actions of $\varphi(G)$ and $\Gal(\mathcal L_G)$ as subgroups of $S_{27}$.
In particular for all subgroups of $S_4$, since the representatives of \tabref{tab:subgroups_S5} respect the inclusions in the left part of \figref{fig:tower_inclusion}, the listed subgroups of $S_{27}$ are directly comparable and satisfy the tower of inclusions described in \figref{fig:tower_inclusion} (right). 
For the others, a conjugation by the action of $S_5$ might be needed. This is also detailed in \secref{appendix:permutations_symmetric}.

\section{Lines on cubic surfaces and Lefschetz' theorem}\label{sec:lines}
We recall some well-known facts about the homology of a smooth cubic surface $X \subset \mathbb{P}^3$. 
The middle homology $H_2(X)$ of $X$ is an odd, unimodular integral lattice of rank 7 with signature $(1,6)$. A basis $b_1, \ldots, b_7$ for $H_2(X)$ can be described from the description of $X$ as the blow-up of the complex projective plane $\mathbb{P}^2$ at six points. The homology class $b_1 \in H_2(X)$ is the pullback of the hyperplane class in $\mathbb{P}^2$ along the blow-down morphism $\pi: X \rightarrow \mathbb{P}^2$, and $b_2, \ldots, b_7$ are the homology classes of the six exceptional divisors. In this basis, the intersection product on $H_2(X)$ is given by the $7 \times 7$-diagonal matrix ${\rm diag}(1,-1, \ldots, -1)$.

A line $L$ is isolated in its homology class, meaning that any two distinct lines on $X$~have a different homology class. We shall write $L$ for the class of a line on $X$, and $H$ for the class of a curve obtained as the intersection of $X$ with a generic plane in $\mathbb{P}^3$. Classes of lines are characterized by their self-intersection and their intersection with $H$; we have $L^2 = -1$ and $L\cdot H = 1$.
There are 27 such classes. Their coordinates in the basis $b_1, \ldots, b_7$ from above~are
\setcounter{MaxMatrixCols}{30}
\setlength{\arraycolsep}{3.5pt}
\renewcommand{\kbldelim}{(}
\renewcommand{\kbrdelim}{)}
\[ \footnotesize
\hspace{-0.1cm}\kbordermatrix{
    &  1 &  2 &  3 &  4 &  5 &  6 &  7 &  8 &  9 &  {10} &  {11} &  {12} &  {13} &  {14} &  {15} &  {16} &  {17} &  {18} &  {19} &  {20} &  {21} &  {22} &  {23} &  {24} &  {25}&  {26} &  {27} \\
b_1& 0 & 0 & 0 & 0 & 0 & 0 & 2 & 2 & 2 & 2 & 2 & 2 & 1 & 1 & 1 & 1 & 1 & 1 & 1 & 1 & 1 & 1 & 1 & 1 & 1 & 1 & 1 \\
b_2&1 & 0 & 0 & 0 & 0 & 0 & 0 & \shortminus 1 & \shortminus1 & \shortminus1 & \shortminus1 & \shortminus1 & \shortminus1 & \shortminus1 & \shortminus1 & \shortminus1 & 0 & 0 & 0 & 0 & 0 & 0 & 0 & 0 & \shortminus1 & 0 & 0\\
b_3 & 0 & 1 & 0 & 0 & 0 & 0 & \shortminus1 & 0 & \shortminus1 & \shortminus1 & \shortminus1 & \shortminus1 & \shortminus1 & 0 & 0 & 0 & \shortminus1 & \shortminus1 & \shortminus1 & 0 & 0 & 0 & 0 & 0 & 0 & \shortminus1 & 0 \\
b_4& 0 & 0 & 1 & 0 & 0 & 0 & \shortminus1 & \shortminus1 & 0 & \shortminus1 & \shortminus1 & \shortminus1 & 0 & \shortminus1 & 0 & 0 & \shortminus1 & 0 & 0& \shortminus1 & \shortminus1 & 0 & 0 & 0 & 0 & 0 & \shortminus1 \\
b_5& 0 & 0 & 0 & 1 & 0 & 0 & \shortminus1 & \shortminus 1& \shortminus1 & 0 & \shortminus1 & \shortminus1 & 0 & 0 & 0 & 0 & 0 & \shortminus1 & 0 & \shortminus1 & 0 & \shortminus1 & \shortminus1 & \shortminus1 & \shortminus1 & 0 & 0 \\
b_6& 0 & 0 & 0 & 0 & 1 & 0 & \shortminus1 & \shortminus1 & \shortminus1 & \shortminus1 & 0 & \shortminus1 & 0 & 0 & \shortminus1 & 0 & 0 & 0 & 0 & 0 & \shortminus1 & 0 & \shortminus1 & \shortminus1 & 0 & \shortminus1 & 0 \\
b_7& 0 & 0 & 0 & 0 & 0 & 1 & \shortminus1 & \shortminus1 & \shortminus1 & \shortminus1 & \shortminus 1 & 0 & 0 & 0 & 0 & \shortminus1 & 0 & 0 & \shortminus1 & 0 & 0  & \shortminus1 & 0 & \shortminus1 & 0 & 0 & \shortminus1
  }.
\]
Below, we shall write $L_i$ for the homology class represented by the $i$-th column of this matrix. In terms of the blow-up $\pi: X \rightarrow \mathbb{P}^2$, $L_1, \ldots, L_6$ are the exceptional divisors, $L_7, \ldots, L_{12}$ are the strict transforms of six conics,  each interpolating a subset of five blown-up points, and $L_{13}, \ldots, L_{27}$ are the strict transforms of 15 lines, each interpolating two of the six points. The hyperplane class has coefficients $H = \left(3,\,-1,\,-1,\,-1,\,-1,\,-1,\,-1\right)$ in our basis.

Consider a family of generically smooth cubic surfaces $X_z\subset\bbP^3$ with $z$ a parameter in some projective space $\bbP^n$ and assume that $X = X_b$ for some base point $b\in \bbP^n$. 
We denote by $\nabla \subset \mathbb{P}^n$ the discriminant hypersurface of this family: $\nabla = \{ z \in \mathbb{P}^n \, : \, X_z \text{ is singular} \}$. 

The action of monodromy in $\pi_1(\bbP^n\setminus \nabla,b)$ on $X_b$ (see \secref{sec:monodromy}) preserves the intersection product, as well as the hyperplane class $H$.
In particular, the monodromy acts on the lines by permuting them, and this permutation characterises the monodromy action entirely.
More concretely, the action of monodromy on $H_2(X_b)$ along a loop $\ell\in \pi_1(\bbP^n\setminus \nabla, b)$ is given by a $7\times7$ matrix $M_\ell \in {\rm Aut}(H_2(X))$, and there exists a unique $\sigma_\ell\in S_{27}$ such that for all $1\le i\le 27$, we have that $M_\ell L_i = L_{\sigma_\ell(i)}$.
In particular, if $\ell_1, \dots, \ell_s$ generate $\pi_1(\bbP^n\setminus\nabla, b)$, then $\sigma_{\ell_1}, \ldots, \sigma_{\ell_s}$ generate the Galois group of our family. In the setting of the Introduction, the family $X_z$ is given by a linear system ${\cal L}_h$, and we have $\Gal(\mathcal L_{h}) = \langle \sigma_{\ell_1}, \dots, \sigma_{\ell_s} \rangle$.

The rest of this section explains our strategy for computing the generators $\ell_1, \ldots, \ell_s \in \pi_1(\mathbb{P}^n \setminus \nabla, b)$. We switch to the following more general setup. 
Let $\mathcal L_h$ be the linear system generated by the degree $d$ polynomials $h = (h_0, \dots, h_n)\in \C[x_0,\dots, x_{m+1}]_d$. We assume that $h$ defines a generically smooth family of hypersurfaces $X_z\subset\bbP^{m+1}$ over $\bbP^n$.
Denote by $\nabla_h$ the discriminant locus of $\mathcal L_h$, that is, the set of $z \in \bbP^n$ such that $X_z$ is singular. 
Choose $b\in \bbP^n \setminus \nabla_h$.
When $n = 1$, the fundamental group $\pi_1(\mathbb{P}^1 \setminus \nabla_h, b)$ is easily computed: $\nabla_h$ consists of finitely many points, and simple loops around these points generate $\pi_1(\bbP^1\setminus\nabla_h, b)$.
For $n >1$ we can restrict to a line $L \ni b$: the inclusion $L\subset \bbP^n$ induces a group morphism
\begin{equation}
\psi \, \colon \, \pi_1(L\setminus\nabla_h,b) \, \to \,  \pi_1(\bbP^n\setminus\nabla_h,b)\,,
\end{equation}
which is surjective when $L$ is sufficiently generic. This follows from the next theorem.

\begin{theorem}\label{thm:lefschetz_zariski}
Let $\nabla$ be a hypersurface in $\bbP^n$ with $n\ge 2$. 
Let $H \subset \mathbb{P}^n$ be a hyperplane.  
The inclusion $H\setminus\nabla\to \bbP^n\setminus\nabla$ induces a group homomorphism
\begin{equation}
\psi \, \colon \, \pi_1(H\setminus\nabla) \, \to \,  \pi_1(\bbP^n\setminus\nabla)\,.
\end{equation}
If $H$ does not intersect the singular locus of $\nabla$, and $H$ is not tangent to $\nabla$, then $\psi$ is surjective.
\end{theorem}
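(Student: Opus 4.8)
The statement is a Zariski--Lefschetz theorem for the fundamental group of the complement of a hypersurface, and the plan is to prove it by realising $H$ as a smooth member of a Lefschetz pencil and then running the usual Lefschetz package: Ehresmann's (Thom's) fibration theorem, the homotopy exact sequence of a fibration, and the local study near a critical value. I would first record that the two hypotheses together say exactly that $H$ is transverse to every stratum of a Whitney stratification of $\nabla$ having $\mathrm{Sing}\,\nabla$ as a union of strata; equivalently $H\cap\nabla$ is a smooth reduced hypersurface of $H\cong\bbP^{n-1}$ and $H$ contains no component of $\nabla$, so in particular $[H]\notin\nabla^\vee$ in the dual projective space.

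Next I would build the pencil. Pick $A\subset H$ a codimension-two linear subspace of $\bbP^n$ which is a generic hyperplane of $H$; since $H\cap\nabla$ is smooth, Bertini gives that $A\cap\nabla=A\cap(H\cap\nabla)$ is smooth and still disjoint from $\mathrm{Sing}\,\nabla$. The hyperplanes containing $A$ form a pencil $\{H_t\}_{t\in\bbP^1}$, corresponding to a line $\ell\subset(\bbP^n)^\vee$ through $[H]$. Because $\nabla$ is a hypersurface, $\dim\mathrm{Sing}\,\nabla\le n-2$, so the degenerate loci in $(\bbP^n)^\vee$ --- hyperplanes tangent at two or more points, tangent along a positive-dimensional set, or tangent at a point of $\mathrm{Sing}\,\nabla$ --- have codimension $\ge 2$, hence a generic line through the fixed point $[H]$ misses them and meets $\nabla^\vee$ transversally in finitely many of its smooth points. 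Thus $\{H_t\}$ is a Lefschetz pencil for $\nabla$: there is a finite set $\Delta\subset\bbP^1$ such that for $t\notin\Delta$ the member $H_t$ is transverse to the stratification of $\nabla$, while for $t\in\Delta$ it has a single ordinary tangency with a smooth stratum. Crucially $H=H_{t_0}$ lies in the pencil and $t_0\notin\Delta$, since $H$ is transverse to the stratification by hypothesis. Blow up the axis: $\sigma\colon\widetilde{\bbP^n}\to\bbP^n$, with the pencil becoming a morphism $\pi\colon\widetilde{\bbP^n}\to\bbP^1$, and set $\widetilde N=\sigma^{-1}(\bbP^n\setminus\nabla)$. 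Since $\sigma$ is the blow-up of the smooth variety $\bbP^n\setminus\nabla$ along the smooth codimension-two centre $A\setminus\nabla$, it induces an isomorphism $\sigma_\ast\colon\pi_1(\widetilde N)\xrightarrow{\ \sim\ }\pi_1(\bbP^n\setminus\nabla)$, and each fibre $\widetilde N\cap\pi^{-1}(t)$ is canonically identified with $H_t\setminus\nabla$; in particular with $F:=H\setminus\nabla$ for $t=t_0$.

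Now I would run the topology. Applying Thom's first isotopy lemma to the proper map $\pi$ with a Whitney stratification adapted to $\sigma^{-1}(\nabla)$ and the exceptional divisor, $\pi$ restricts to a locally trivial fibre bundle $\widetilde N':=\widetilde N\cap\pi^{-1}(\bbP^1\setminus\Delta)\to\bbP^1\setminus\Delta$ with fibre $F$. The fibre $F$ is connected (the complement in $\bbP^{n-1}$ of the hypersurface $H\cap\nabla$), so the homotopy exact sequence shows that the image of $\pi_1(F)$ in $\pi_1(\widetilde N')$ equals the kernel $K$ of the surjection $\pi_1(\widetilde N')\twoheadrightarrow\pi_1(\bbP^1\setminus\Delta)$, a subgroup that does not depend on the choice of regular fibre; hence $\pi_1(\widetilde N')$ is generated by $K$ together with lifts $\tilde\gamma_1,\dots,\tilde\gamma_r$ of loops $\gamma_i$ encircling the points of $\Delta$. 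Re-including the fibres over $\Delta$ removes only subsets of real codimension two, so $\pi_1(\widetilde N')\twoheadrightarrow\pi_1(\widetilde N)$, and it remains to see that each $\tilde\gamma_i$ lands in the image of $\pi_1(F)$ inside $\pi_1(\widetilde N)$. For this one uses the local analysis near a critical value: over a small disc $D_i$ around $t_i\in\Delta$ the ordinary tangency has a standard local model, $\widetilde N\cap\pi^{-1}(D_i)$ deformation retracts onto its central fibre, and the inclusion of a nearby regular fibre is surjective on $\pi_1$ --- here $n\ge2$ is precisely what makes the vanishing cycle have positive dimension. Taking $\gamma_i$ and its lift inside $\pi^{-1}(D_i)$, one gets $[\tilde\gamma_i]\in\mathrm{im}(\pi_1(F)\to\pi_1(\widetilde N))$, so $\pi_1(F)\to\pi_1(\widetilde N)$ is onto. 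Composing with $\sigma_\ast$ and observing that $F\hookrightarrow\widetilde N\xrightarrow{\ \sigma\ }\bbP^n\setminus\nabla$ is the inclusion $H\setminus\nabla\hookrightarrow\bbP^n\setminus\nabla$, we conclude that $\psi$ is surjective.

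The main obstacle is the local step: that adjoining a singular fibre with an ordinary tangency does not enlarge the fundamental group beyond the image of a generic fibre. This is the open (``Zariski'') analogue of the classical local Lefschetz lemma, and it is exactly where the hypothesis $n\ge2$ is used. A second, more bookkeeping-type point is checking that the explicit transversality assumed for $H$ (disjointness from $\mathrm{Sing}\,\nabla$, no tangency) really suffices to place $H$ as a smooth member of a Lefschetz pencil without losing any genericity --- this uses that $\nabla$ is a hypersurface, so the bad loci in the dual space are small. Both points can be avoided by instead quoting an explicit form of the Hamm--L\^{e} Zariski-type Lefschetz theorem (or the Lefschetz theorems of stratified Morse theory) and checking that our hypotheses match its genericity requirement.
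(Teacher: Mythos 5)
The paper does not actually prove Theorem~\ref{thm:lefschetz_zariski}: it quotes it and points to Lefschetz, to Lamotke \S7.4.1, and to Ch\'eniot and Hamm--L\^e for modern proofs. Your sketch reconstructs exactly the classical pencil argument contained in those references (generic pencil through $H$, blow-up of the axis, isotopy/fibration over $\bbP^1\setminus\Delta$, homotopy exact sequence, local analysis at the critical values), so in spirit you are following the same route the paper delegates to the literature. The global bookkeeping (the blow-up inducing an isomorphism on $\pi_1$ of the complement, surjectivity after re-inserting the complex-codimension-one singular fibres, and the exact-sequence argument identifying the image of $\pi_1(F)$ with the kernel over the base) is fine.

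There is, however, one concrete false step: the claim that the hyperplanes which are ``tangent at a point of $\mathrm{Sing}\,\nabla$'' form a codimension $\ge 2$ locus in $(\bbP^n)^{\vee}$, so that a generic pencil through $[H]$ has only ordinary tangencies at smooth points of $\nabla$. This is wrong precisely in the situation the theorem is used for: $\nabla$ is a discriminant, and $\mathrm{Sing}\,\nabla$ typically has dimension $n-2>0$. Already the hyperplanes through one fixed singular point form a hyperplane in $(\bbP^n)^{\vee}$ (codimension one), and when $\dim \mathrm{Sing}\,\nabla\ge 1$ \emph{every} member of the pencil meets $\mathrm{Sing}\,\nabla$; more to the point, the locus of hyperplanes failing stratified transversality along a lower stratum $S$ is (essentially) the dual variety of $\overline{S}$, which is in general a hypersurface in $(\bbP^n)^{\vee}$, not codimension two. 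So the critical members $H_{t_i}$ of any pencil will in general fail transversality at points of $\mathrm{Sing}\,\nabla$, and the local model there is a stratified critical point, not the quadratic tangency with the smooth locus that your local lemma assumes. The argument therefore does not close as written; the needed local statement at such values is the stratified local Zariski/Lefschetz theorem of Hamm--L\^e (or Goresky--MacPherson's stratified Morse theory). Your own fallback --- invoke the Hamm--L\^e/Ch\'eniot theorem and verify that the hypotheses on $H$ (disjoint from $\mathrm{Sing}\,\nabla$ and nowhere tangent, i.e.\ transverse to a Whitney stratification of $\nabla$) match its genericity requirement --- is the correct repair, and it is in effect what the paper does by citing those works.
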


\thmref{thm:lefschetz_zariski} dates back to Lefschetz \cite{Lefschetz_1924}. 
A more modern treatment can be found in \cite[\S7.4.1]{Lamotke_1981}.
A stronger result of Zariski \cite{Zariski_1937} states that $\psi$ is in fact an isomorphism when $n\ge 3$.
A rigorous proof of this theorem was first given in \cite{HammTrang_1973}, and a more topological proof was found by Chéniot \cite{Cheniot_1974}.
When $n=2$, a result of Van Kampen \cite{VanKampen_1933} gives the kernel of $\psi$, and again, a topological proof was given in \cite{Cheniot_1973a}. Surjectivity of $\psi$
suffices for our purposes.

Repeatedly applying \thmref{thm:lefschetz_zariski} gives the following strategy for computing generators of $\pi_1(\mathbb{P}^n \setminus \nabla, b)$.   Randomly choose a generic line $L\subset \bbP^n$ containing $b$ and compute the finite set of points $\nabla\cap L$.
Next, compute a basis $\ell_1, \dots,\ell_s$ of the first homology group of the Voronoi graph of $\nabla \cap L$ in $L\simeq \bbP^1$.
By \thmref{thm:lefschetz_zariski}, the loops $\ell_1, \dots,\ell_s$ generate $\pi_1(\bbP^n\setminus\nabla,b)$.

The correctness of the outlined algorithm relies on the fact that $L$ is generic in the sense of \thmref{thm:lefschetz_zariski}. To certify this, it suffices to check that the intersection $\nabla \cap L$ consists of as many points as the degree of the hypersurface $\nabla$. In our setting, we compute the degree of $\nabla_h$ by computing its defining equations.
This computation is detailed in \secref{sec:discriminants}.

\section{Computing monodromy} \label{sec:monodromy}
The \emph{monodromy group} of the linear system ${\cal L}_h$ is the image of the monodromy representation
\begin{equation} \label{eq:monodromy_rep}
\pi_1(\bbP^n \setminus \nabla_h, b) \, \to \,  \operatorname{Aut}\left(H_m(X_b)\right)\,,
\end{equation}
where an element of $\pi_1(\bbP^n \setminus \nabla_h, b)$ acts by parallel transport on the singular homology lattice $H_m(X_b)$ equipped with the intersection product.
In order to compute this group, we proceed in two steps. First, we find a set of generators $\ell_1, \dots, \ell_s$ for $\pi_1(\bbP^n\setminus \nabla, b)$. Second, we determine the action of monodromy along $\ell_i$ on $H_m(X_b)$ for $1\le i \le s$. \thmref{thm:lefschetz_zariski} and the subsequent discussion allow us to limit the discussion to $n = 1$, and explain how to perform step one. This section is devoted to the second step of the algorithm. 

We set $n = 1$ and recall the definition of the map \eqref{eq:monodromy_rep}. Let $f_t \in \mathbb Q(t)[x_0, \dots, x_{m+1}]_d$ be a homogeneous polynomial in $m+2$ variables defining a 1-parameter family of generically smooth hypersurfaces in $\bbP^{m+1}$ of degree $d$.
We define
\begin{equation}
Y = \left\{ (x,t)\in \bbP^{m+1}\times\bbP^1\st f_t(x_0, \dots, x_{m+1})=0\right\}\,.
\end{equation}
The projection $\pr_2$ onto the $t$-coordinate is such that the generic fibre is $X_t := \pr_2^{-1}(t) = V(f_{t})$.
Away from the set of \emph{critical values} $\nabla$ of $\pr_2$, the restriction $\pr^{-1}_2(\bbP^1\setminus\nabla) \to \bbP^1\setminus\nabla$ defines a locally trivial fibration by Ehresmann's fibration theorem.
This theorem implies that for any $t_1, t_2\in \bbP^1\setminus\nabla$, the homology groups $H_k(X_{t_1})$ and $H_k(X_{t_2})$ are isomorphic.
Furthermore, any path $\gamma: [0,1]\to \bbP^1\setminus\nabla$ such that $\gamma(0)=t_1$ and $\gamma(1)=t_2$ induces an isomorphism $\ell_*: H_k(X_{t_1}) \to H_k(X_{t_2})$ by parallel transport, and one may show that $\ell_*$ only depends on the homotopy class of $\ell$ in $\pi_1(\bbP^1\setminus\nabla)$. 
When $t_1=t_2=b$, the maps $\ell^*$ induce a group representation of the fundamental group $\pi_1(\bbP^1\setminus\nabla, b)$ called the \emph{monodromy representation}:
\[ \pi_1(\bbP^1\setminus\nabla, b) \to \operatorname{Aut}(H_m(X_{b})),   \quad \ell \, \mapsto \, \ell_*
.\]
This section explains how to compute this representation using the methods of \cite[Section~3]{LairezEtAl_2024}.
Our main case of interest is $m = 2, d=3$, but we will work in this general setting.

In short, the general strategy is as follows.
We compute the period matrix $\Pi$ of the fibre $X_b$, which allows us to relate homology cycles to cohomology classes.
The middle cohomology of $X_b$ inherits a connection from the differentiation with respect to the parameter~$t$.
In particular, the entries of $\Pi$ are solutions to differential equations, which allows us to analytically continue them along a loop of $\pi_1(\bbP^1\setminus\nabla)$.
The resulting matrix is another period matrix $\tilde \Pi$ of $X_b$ on which the monodromy has acted.
We may choose our cohomology basis so that it is rational with respect to the parameter --- this means that in particular the action of monodromy on the period matrix $\Pi$ only stems from the action of monodromy on the homology, which we may therefore recover numerically.
Since the homology has the structure of an integral lattice, the action of monodromy on it is integral and computing it with sufficient certified precision allows us to recover it exactly and certifiably. This is explained in Sections \ref{sec:period} and \ref{sec:numerical_integration}.

If $G$ is a group acting on $X_t$ by automorphisms of the ambient space $\bbP^{m+1}$, similar methods allow to compute the action of $G$ on the homology of $X_t$. This is the content of \secref{sec:group_action}.

\subsection{Period matrices and Picard-Fuchs equations}\label{sec:period}
We now fix $t\in \bbP^1\setminus \nabla$.
Let $\Hdr^k(X_t)$ be the $k$-th algebraic de Rham cohomology group of the hypersurface $X_t$, and let $H_k(X_t)$ be its $k$-th singular homology group with coefficients in $\Z$. The \emph{de Rham pairing} or \emph{integration pairing} $\Hdr^k(X_t) \times H_k(X_t) \rightarrow \mathbb{C}$ sends a pair $(\omega,\gamma) \in \Hdr^k(X_t) \times H_k(X_t)$ to the value of the integral $\int_\gamma\omega$. Complex numbers in the image of this map are called \emph{periods} \cite{CarlsonEtAl_2017,Griffiths_1969}. 
The de Rham pairing is perfect, i.e., it realises $\Hdr^k(X_t)$ and $H_k(X_t) \otimes_\Z \mathbb{C}$ as duals of each other in a canonical way \cite{DeRham_1931, Grothendieck_1966}.
The \emph{$k$-th period matrix} of $X_t$ is the matrix of the de Rham pairing.
When $X_t$ is a projective hypersurface of dimension $m$, only the $m$-th period matrix is interesting. Indeed, as a consequence of Lefschetz' Hyperplane theorem and Poincaré duality, all other matrices are either $0\times0$ or $1\times 1$ matrices \cite[p. 53 and 156]{GriffithsHarris_1978}.
In what follows, by \emph{the period matrix} of $X_t$ we mean the $m$-th period matrix.

The period matrix depends on a choice of bases for $\Hdr^m(X_t)$ and $H_m(X_t)$.
The homology group $H_m(X_t)$ has the structure of an integral lattice, with an intersection product $(\gamma_1, \gamma_2)\mapsto \gamma_1 \cdot \gamma_2$.
We will always work with an integral basis of the homology.
Although this is not particularly relevant to this text, we want to mention that the cohomology has additional structure stemming from the Hodge filtration.
The choice of basis that we will make for this space (and which we make explicit just below) preserves this information as well.

The cohomology group $\Hdr^{m}(X_t)$ can be decomposed into two main parts: one coming from the cohomology of the ambient space $\Hdr^{m+1}(\bbP^{m+1})$, and one coming from the cohomology of the complement $\Hdr^{m+1}(\bbP^{m+1}\setminus X_t)$. Let us start with the former.

The cohomology of the projective space is well understood: $\Hdr^{m+1}(\bbP^{m+1}) \simeq \C$ if $m$ is even and trivial otherwise. Hence, the corresponding direct summand of $\Hdr^m(X_t)$ is either one- or zero-dimensional. 
If $m$ is even, a generator is given by the dual of the homology class $H \in H_m(X_t)$ of the intersection of $X_t$ with a generic linear subspace of complex dimension $m/2+1$. 
That is, the corresponding form in $\Hdr^{m}(X_t)$ is the cocycle $\omega_0: \gamma \mapsto  \gamma \cdot  H$ which sends an $m$-cycle $\gamma$ to its intersection product with $H$.
In particular, $H$ is invariant under monodromy, and so is $\omega_0$. 
When $m$ is odd, we set $H=0$ for consistency. 

The other piece is called the \emph{primitive de Rham cohomology}, or simply \emph{primitive cohomology} of $X_t$. It is denoted by $\PHdr^m(X_t)$ and is typically much richer than the ambient piece. 
It is the subspace of $\Hdr^m(X_t)$ of forms whose periods on the hyperplane class vanish:
\begin{equation}
\PHdr^m(X_t) := \left\{\omega\in \Hdr^m(X_t) \st \int_{H}\omega = 0\right\}
\end{equation}
There is a linear isomorphism called the \emph{residue map}, see  \cite[\S5.3]{CoxKatz_1999} and \cite[\S8]{Griffiths_1969}:
\begin{equation}
\res \, \colon \, \Hdr^{m+1}(\bbP^{m+1}\setminus X_t)\, \, \stackrel{\sim}{\smash{\longrightarrow}\rule{0pt}{0.4ex}} \, \,  \PHdr^m(X_t)\,.
\end{equation}
The space $\Hdr^{m+1}(\bbP^{m+1}\setminus X_t)$, and therefore $\PHdr^m(X_t)$, can be computed in terms of rational functions \cite{Grothendieck_1966}. That is, when $f_t \in \C[x_0, \dots, x_{m+1}]_d$ is the defining equation of $X_t$, we have
\begin{equation}\label{eq:primitive_homology}
\Hdr^{m+1}(\bbP^{m+1}\setminus X_t) \, \simeq \,  \frac{ \operatorname{Vect}_\C \left\{ \frac{a}{f_t^k} \st k\ge 0\text{ and } a\in \C[x]_{kd - m- 2} \right\} }{ \operatorname{Vect}_\C \left\{ \frac{\partial}{\partial x_i}\frac{b}{f_t^k} \st k\ge 0\text{ and } 0\le i\le m+1\text{ and } b\in \C[x]_{kd - m- 1} \right\} }, 
\end{equation}
where $x=(x_0, \dots, x_{m+1})$. The class of $\frac{a}{f_t^k}$ represents the $(m+1)$-form $\frac{a}{f_t^k}\, \Omega_{m+1}$ where
 \begin{equation} \label{eq:Omega}
 \Omega_{m+1} \, = \,  \sum_{i=0}^{m+1} (-1)^ix_i  \, \ud x_0\wedge\cdots\widehat{\ud x_i}\wedge \cdots \ud x_{m+1}
\end{equation}
  is the volume form on $\bbP^{m+1}$.
  We note that the Hodge filtration coincides with the filtration by the pole order $k$ in \eqref{eq:primitive_homology} by a result of Griffiths \cite{Griffiths_1969}.
  A basis of the primitive cohomology given by the class of rational functions can be computed via Griffiths--Dwork reduction. We recommend \cite[\S5.3]{CoxKatz_1999} and \cite{Lairez_2016} for more details and an introduction to this topic.
  
\begin{example}\label{ex:fermat_cohomology}
The Fermat cubic surface $X$ is defined by the equation $f=x_0^3+x_1^3+x_2^3+x_3^3 = 0$. 
The Griffiths--Dwork reduction yields the following basis of primitive cohomology:
\begin{equation}
\begin{gathered}
\omega_1 \, = \, \res \frac{x_0x_1}{f}\, \Omega_3\,, \quad 
\omega_2 \, = \, \res \frac{x_0x_2}{f}\, \Omega_3\,, \quad 
\omega_3 \, = \, \res \frac{x_0x_3}{f}\, \Omega_3\,, \\
\omega_4 \, = \, \res \frac{x_1x_2}{f}\, \Omega_3\,, \quad 
\omega_5 \, = \, \res \frac{x_1x_3}{f}\, \Omega_3\,, \quad 
\omega_6 \, = \, \res \frac{x_2x_3}{f}\, \Omega_3\,.
\end{gathered}
\end{equation}
The classes of $\Hdr^2(X)$ are purely of Hodge type $(1,1)$, as every cohomology form can be written with a pole order of 1.
This reflects the fact that $X$ is a smooth rational surface.
\end{example}
  
  We return to the case where $t$ is a free parameter.
  We similarly define the \emph{relative\footnote{We note that here (and below) ``relative'' does not refer to a pair, but to the fact that we are working over the base $\bbP^1$. We will not be considering relative (co)homology of a pair in this text.}  primitive (de Rham) cohomology} $\mathcal{PH}^m(X_t/\C(t))$ of our family by extending scalars:
  \begin{equation}\label{eq:primitive_homology_relative}
\mathcal{PH}^{m}(X_t) \, \simeq \, \frac{ \operatorname{Vect}_{\C(t)} \left\{ \frac{a}{f_t^k} \st k\ge 0\text{ and } a\in \C(t)[x]_{kd - m- 2} \right\} }{ \operatorname{Vect}_{\C(t)} \left\{ \frac{\partial}{\partial x_i}\frac{b}{f_t^k} \st k\ge 0\text{ and } 0\le i\le m+1\text{ and } b\in \C(t)[x]_{kd - m- 1} \right\} }.
\end{equation}
This defines a sheaf on $\bbP^1$ which is equipped with the \emph{Gauss--Manin connection}
\begin{equation}
\nabla_t \, \colon \,  \mathcal{PH}^{m}(X_t) \, \to \,  \mathcal{PH}^{m}(X_t)\,,
\end{equation}
a connection derived from the differentiation in $\C(t)$. This is not to be confused with our notation $\nabla$ or $\nabla_h$ for the discriminant locus.
The action of $\nabla_t$ on the cohomology is the one induced by differentiation of the rational function representing an $m$-form via the residue: 
\begin{equation}
	\nabla_t \res\left(\frac{a}{f_t^k} \Omega_{m+1}\right) = \res\left(\frac{\partial}{\partial t}\left(\frac{a}{f_t^k}\right) \Omega_{m+1}\right).
\end{equation}
In particular, Griffiths--Dwork reduction allows to compute a connection matrix from a basis of $\mathcal{PH}^{m}(X_t)$ given in terms of rational functions in $t$. See \cite{BostanEtAl_2013} for details on this computation.

Given an $m$-form $\omega_t \in \mathcal{PH}^{m}(X_t)$, we consider all its successive derivatives $\nabla_t\,\omega_t, \nabla_t^2\,\omega_t, \dots$. As $\mathcal{PH}^{m}(X_t)$ is finite-dimensional, there exists an integer $r\ge 0$ such that the classes $\omega_t, \nabla_t\,\omega_t, \dots, \nabla_t^r\,\omega_t$ are $\mathbb{C}(t)$-linearly dependent in $\mathcal{PH}^{m}(X_t)$. The linear relation is a differential equation called the \emph{Picard--Fuchs equation} for $\omega_t$. It is written as $\mathcal P \,  \omega_t = 0$, with 
\begin{equation}\label{eq:picard_fuchs_equation}
\mathcal P \,  = \, a_r(t)\nabla_t^r + \cdots + a_1(t)\nabla_t + a_0(t) \, \, \, \in \, \C[t]\langle\nabla_t \rangle.
\end{equation}
The equality $\mathcal P \,  \omega_t = 0$ holds in $\mathcal{PH}^{m}(X_t)$. It means that the form $\mathcal P \, \omega_t$ is an exact differential.

The Gauss--Manin connection behaves well with respect to integration.
More precisely, let $\gamma_t$ be a section over an open set of the integral relative homology $\mathcal H_m(X_t)$, viewed as a sheaf on $\bbP^1$.
We may define the relative period $\pi(t) = \int_{\gamma_t}\omega_t$.
We have
\begin{equation}
	\int_{\gamma_t} \nabla_t \,\omega_t \,  = \, \frac{\ud}{\ud t} \int_{\gamma_t}\omega_t\,.
\end{equation}
In particular, we find that $\mathcal P \,  \pi(t) = 0$, where here we abuse notation slightly by writing $\mathcal P$ for the operator of \eqref{eq:picard_fuchs_equation} with $\nabla_t$ replaced by $\ud/\ud t$.

\begin{example}
We consider the family defined by the equation $f_t = x_0^3+x_1^3+x_2^3+x_3^3 - tx_0x_1x_2$.
The basis of primitive cohomology of \exref{ex:fermat_cohomology} extends to a basis of the relative primitive cohomology by simply replacing the denominator $f$ by $f_t$:
\begin{equation}
\begin{gathered}
\omega_1(t) = \res \frac{x_0x_1}{f_t}\Omega_3\,, \quad 
\omega_2(t) = \res \frac{x_0x_2}{f_t}\Omega_3\,, \quad 
\omega_3(t) = \res \frac{x_0x_3}{f_t}\Omega_3\,, \\
\omega_4(t) = \res \frac{x_1x_2}{f_t}\Omega_3\,, \quad 
\omega_5(t) = \res \frac{x_1x_3}{f_t}\Omega_3\,, \quad 
\omega_6(t) = \res \frac{x_2x_3}{f_t}\Omega_3\,.
\end{gathered}
\end{equation}
The Picard--Fuchs equation of $\omega_1$ is given by
$\mathcal P =  (t^3 + 27) \nabla_t + 2t^2$.
Indeed, we have
\begin{equation*}
\begin{aligned}
\left((t^3+27)\frac{\partial}{\partial t} +2t^2\right)\frac{x_0x_1}{f_t} &
\, = \,  \frac{1}{2} \frac{\partial}{\partial x_0}\left(\frac {t^2 x_0^2 x_1+3 t x_0 x_2^2+18 x_1^2 x_2}{f_t}\right)\\
& \hspace{-0.5cm}+ \frac{t}{2} \frac{\partial}{\partial x_1}\left(\frac {x_1 \left(t x_0 x_1-3 x_2^2\right)}{f_t}\right) + \frac{3t}{2} \frac{\partial}{\partial x_2}\left(\frac {x_3^3+x_2^3 + t x_0 x_1 x_2}{f_t}\right)\,.
\end{aligned} \qedhere
\end{equation*}
\end{example}

\subsection{Numerical integration methods and monodromy}\label{sec:numerical_integration}

We now explain how to compute the monodromy representation \eqref{eq:monodromy_rep}.
We choose a basis for the primitive cohomology of $X_b$ using Griffiths--Dwork reduction. This basis is of the form $\omega_i(t) = {a_i}{f_t^{-k_i}} \, \Omega_{m+1}, i = 1, \ldots, s$.
If $m$ is even we complete this list to a basis of $\mathcal{H}^m(X_t)$ by adding the dual $\omega_0$ of $H$.
The methods of \cite{LairezEtAl_2024} and \cite{Sertoz_2019} allow to compute a basis $\gamma_0 = H, \gamma_1, \dots, \gamma_s$ of $H_m(X_b)$, the corresponding intersection product and the period matrix of $X_t$ in the bases $\omega_0(b), \dots, \omega_s(b)$ and $\gamma_0, \dots, \gamma_s$.
The periods $\Pi_{ij}(b) = \int_{\gamma_j} \omega_i(b)$ are computed with certified bounds of precision.

For each $1\le i\le s$, we can compute the Picard--Fuchs equation $\mathcal P_i$ corresponding to $\omega_i$ and denote by $r$ its order.
Using the Gauss-Manin connection matrix, we are also able to compute numerical approximations of the periods $\int_{\gamma_j}\nabla_t^k\,\omega_i(b)$ for all $j$ and $0\le k \le r$, which are the values of the derivatives of $\Pi_{ij}(t) := \int_{\gamma_j(t)}\omega_i(t)$, where $\gamma_j(t)$ is the parallel transport of $\gamma_j$ along a path in a simply connected neighbourhood of $b$ in $\bbP^1\setminus\nabla$.

The relative period $\Pi_{ij}(t)$ is a solution of a differential equation $\mathcal P_i$ with specified initial conditions.
We use numerical analytic continuation methods for differentially-finite functions to compute the action of the monodromy along a loop $\ell\in \pi_1(\bbP^1\setminus\nabla)$ \cite{ChudnovskyChudnovsky_1990, Mezzarobba_2016,VanDerHoeven_1999}.
Doing this for each entry of $\Pi$, we analytically continue $\Pi$ along $\ell$ and find another period matrix $\tilde\Pi$ with entries $\tilde\Pi_{ij} = \int_{\ell_*\gamma_j}\ell^*\omega_i(b)$. Note that the periods for $\omega_0$ are invariant under monodromy.

The form $\ell^*\omega_i$ is the result of the action of monodromy along $\ell$ on $\omega_i$.
But we have chosen $\omega_i$ to be rational in $t$, and thus $\ell^*\omega_i(b) = \omega_i(b)$. Therefore, by linearity, we find that
\begin{equation}
\tilde \Pi_{ij} = \int_{\ell_*\gamma_j}\omega_i(b)\,,
\end{equation}
which in matrix form reads $
\tilde \Pi = \Pi \operatorname{Mat}_{\ell_*}$, and thus $\Pi^{-1} \tilde\Pi =\operatorname{Mat}_{\ell_*} \in \operatorname{GL}(H_m(X_b))$. 
We compute $\Pi^{-1} \tilde\Pi$ numerically with certified bounds of precision, and doing so with a resulting precision less than one half is sufficient to recover $\operatorname{Mat}_{\ell_*}$ exactly, as it has integer coefficients.

In practice we may gain performance by computing the action of monodromy on the period matrix by doing numerical analytic continuation for a single cyclic form (with respect to differentiation) instead of for each entry of $\Pi(t)$. 
See \cite[Section 3.4]{LairezEtAl_2024} for details. The basis of homology, the corresponding period matrix and the coefficients of the hyperplane class $H$ in this basis are computed using the \texttt{lefschetz-family} package\footnote{\url{https://github.com/ericpipha/lefschetz-family}} \cite{LairezEtAl_2024}, and the numerical analytic continuation is performed using Marc Mezzarobba's implementation in \texttt{ore\_algebra} \cite{KauersEtAl_2015}, both in SageMath \cite{sagemath}. The computation of the period matrix of the Clebsch surface took around 50 seconds, while the numerical analytic continuation part took 15 minutes in total for all the subgroups of $S_5$, ranging from 10 seconds for $D_6$ to four minutes for the generic case $\Z_1$. All these computations were done on a MacBook Pro running on 10 cores.

\subsection{Group action on the homology}\label{sec:group_action}

Let $G$ be a group acting by linear automorphisms on $\mathbb P^{m+1}$ and let $f\in \C[x_0, \dots, x_{m+1}]_d$ be invariant under the action of $G$, i.e. $g^*f = f(g^{-1}\, x) = f$ for all $g\in G$. Further assume that $X:=V(f)$ is smooth.
Then, from the observations of \secref{sec:period}, we can compute the action of $G$ on the cohomology $\Hdr^m(X)$ of $X$ in the following manner.
First of all, the image of a linear subspace under a linear automorphism is still a linear subspace, and thus $H$ is invariant under the action of $G$. In particular, $g^*\omega_0 = \omega_0$, where $\omega_0$ is the cocycle $\gamma \mapsto \gamma \cdot H$. The action on ${\rm PH}_{\rm dR}^m(X)$ is computed via the following formula: for $g\in G$, we have that
\begin{equation}\label{eq:group_action}
g^*\res \left(\frac{a}{f^{k}}\Omega_{m+1}\right)  
= \res \left(\frac{g^*a}{g^*f^{k}}g^*\Omega_{m+1}\right) 
= \det(g) \res \left(\frac{g^*a}{f^{k}}\Omega_{m+1}\right)\,,
\end{equation}
for all $k\ge 0$ and $a\in \C[x_0, \dots, x_{m+1}]_{kd-m-2}$.
Hence, given a basis of $\Hdr^m(X)$ of primitive cohomology classes represented by pairs $(a_i, k_i)$, along with $\omega_0$ in even dimension, one can use Griffiths--Dwork reduction to compute the matrix $\operatorname{Mat}_{g^*}$ of the action of $g$ on $\Hdr^m(X)$.
\begin{example}
Let $X = V(x_0^3+x_1^3+x_2^3+x_3^3)\subset \bbP^3$ be the Fermat cubic surface.
We compute the action on $\Hdr^2(X)$ of the transposition $\tau = (x_0\,x_1)$ which acts by the linear automorphism
\begin{equation}
 [x_0:x_1:x_2:x_3]\mapsto [x_1:x_0:x_2:x_3]\,.
\end{equation}
We work in the basis $\omega_1, \dots, \omega_6$ computed in \exref{ex:fermat_cohomology}.
The formula \eqref{eq:group_action} yields that the action of $\tau$ is given by $\tau^*\omega_0 = \omega_0$ and
\begin{equation*}
\tau^*\omega_1 = -\omega_1, \quad
\tau^*\omega_2 = -\omega_4, \quad
\tau^*\omega_3 = -\omega_5, \quad 
\tau^*\omega_4 = -\omega_2, \quad
\tau^*\omega_5 = -\omega_3, \quad
\tau^*\omega_6 = -\omega_6. \qedhere
\end{equation*}
\end{example}
To recover the action on the homology, we turn to periods.
 We have that
$\int_{g_*\gamma}g^*\omega = \int_\gamma \omega$
for all $\omega\in \Hdr^m(X)$ and $\gamma \in H_m(X)$.
This translates to the matrix equation
\begin{equation} \label{eq:Matg}
	\operatorname{Mat}_{g^*}\Pi\operatorname{Mat}_{g_*} = \Pi\,,\qquad \text{i.e.,}\qquad
	\Pi^{-1}\operatorname{Mat}^{-1}_{g^*}\Pi \, = \,  \operatorname{Mat}_{g_*}  \in \operatorname{GL}(H_m(X_b))\,.
\end{equation}
Again, we may evaluate the left hand-side numerically, and it is sufficient to do so with a certified resulting precision of less than one half to recover $\operatorname{Mat}_{g_*}$ exactly.

\section{Computing discriminants} \label{sec:discriminants}
A subtask in our computations is to find a set of generators for $\pi_1(\mathbb{P}^n \setminus \nabla_h,b)$. Theorem \ref{thm:lefschetz_zariski} allows us to restrict to a one-dimensional subfamily $L \subseteq \mathbb{P}^n$ containing $b$ and compute $\pi_1(L \setminus \nabla_h, b)$ instead. This requires $L$ to be \emph{generic}, in the sense that it intersects the discriminant hypersurface $\nabla_h$ transversally. To verify this, it suffices to show that the intersection $L \cap \nabla_h$ consists of $\deg \nabla_h$ distinct points, where we think of $\nabla_h$ as a reduced $(n-1)$-dimensional scheme. Checking this is easy once we know the degree $\deg \nabla_h$. In the context of Table \ref{tab:main}, $\nabla_h = \nabla_G$ is a highly non-generic linear section of the degree $32$ discriminant for the complete linear system of cubic surfaces. Its reduced degree is often smaller than $32$. This section explains how to compute the defining equation of $\nabla_h$, which gives more information than needed, as we only need to know the degree, but it is independently interesting. 

Let $h = (h_0, h_1, \ldots, h_n)$ be $n + 1$ linearly independent degree $d$ forms in $m+2$ variables. That is, $h_i \in \mathbb{C}[x_0, \ldots, x_{m+1}]_d$. The linear system ${\cal L}_h \simeq \mathbb{P}^n$ spanned by $h$ is a subsystem of $\mathbb{P} H^0(\mathbb{P}^{m+1}, {\cal O}(d))$, the degree $d$ hypersurfaces in $\mathbb{P}^{m+1}$. The identification ${\cal L}_h \simeq \mathbb{P}^n$~is
\[ {\cal L}_h \, \ni \, X_z \, = \, \{ x \in \mathbb{P}^{m+1} \, : \, z_0 \, h_0(x) + z_1 \, h_1(x) + \cdots + z_n \,  h_n(x) \, = \, 0 \} \, \, \sim \, \, (z_0: z_1 : \cdots : z_n) \,\in \,  \mathbb{P}^n. \] 
The \emph{discriminant locus} $\nabla_h$ of this linear system consists of the singular hypersurfaces in ${\cal L}_h$. 
We shall assume throughout the section that $\nabla_h$ is a hypersurface, so that it is described by a single equation $\Delta_h = 0$, with $\Delta_h \in \mathbb{C}[z_0, \ldots, z_n]$. 
This is equivalent to assuming that a generic element of ${\cal L}_h$ is smooth. 
To test this assumption, it suffices to find one smooth hypersurface $X_z \in {\cal L}_h$. 
The polynomial $\Delta_h$ is called the \emph{discriminant polynomial} of ${\cal L}_h$. 
It is defined up to a nonzero scalar factor. Our goal in this section is to compute $\Delta_h$. 
Though our main case of interest is $d = 3, m = 2$, we will work in this more general setting. 
We assume that the reader is familiar with algorithmic algebraic geometry at the level of the textbook~\cite{CoxLittleOSheaIVA}. All computations reported in this section are performed using the Julia package \texttt{Oscar.jl} \cite{OSCAR}. 

A point $z \in \mathbb{P}^n$ belongs to $\nabla_h$ if and only if there exists $x \in \mathbb{P}^{m+1}$ so that $(x,z) \in Y$, with 
\[ Y =  \left \{ (x, z) \in \mathbb{P}^{m+1} \times \mathbb{P}^n \, \st \, \frac{\partial}{\partial x_i}(z_0  h_0(x) + z_1  h_1(x) + \cdots + z_n   h_n(x))  =  0, \, i = 0, \ldots, m+1 \right \}. \]
In other words, we have $\nabla_h = \pr_2(Y)$, where $\pr_2: Y \rightarrow \mathbb{P}^n$ is the natural coordinate projection. 
An algorithm for computing $\Delta_h$ is as follows. 
Consider the ideal $I(Y)$ in the ring 
$\mathbb{C}[x,z] = \mathbb{C}[x_0, \ldots, x_{m+1}, z_0, \ldots, z_n]$ generated by the $m+2$ defining equations of $Y$. 
First, compute the saturation of $I(Y)$ by the irrelevant ideal $\mathfrak{m} = {\langle x_0, \ldots, x_{m+1} \rangle}$ of $\mathbb{P}^{m+1}$: \[I(Y) : \mathfrak{m}^\infty = \{ p \in \mathbb{C}[x,z] \, :\, p \cdot \mathfrak{m}^k \subseteq I(Y) \text{ for some } k \in \mathbb{N} \}.\]
From this, eliminate the variables $x_0, \ldots, x_{m+1}$. 
Both steps are standard operations in computer algebra. 
The radical of the elimination ideal $(I(Y) : \mathfrak{m}^\infty) \cap \mathbb{C}[z_0, \ldots, z_n]$ is~$\langle \Delta_h \rangle$. 

A first improvement to this algorithm is to replace ``saturation and elimination'' by ``elimination on each affine chart of $\mathbb{P}^m$''. Concretely, we compute the intersection of the following $m+2$ elimination ideals: $(I(Y) + \langle x_i - 1 \rangle) \cap \mathbb{C}[z_0, \ldots, z_n]$, $i = 0, \ldots, m+1$. This may seem to be more of a detour, but it is in fact significantly more efficient in our experiments. 
\begin{example} \label{ex:S4}
The following polynomials generate the linear system of $S_4$-invariant cubics:
\[ \begin{matrix} h_0 \, = \, x_0^3 + x_1^3 + x_2^3 + x_3^3, \\
h_1\, = \,  x_0^2 x_1 + x_0^2 x_2 + x_0^2 x_3 + x_0 x_1^2 + x_0 x_2^2 + x_0 x_3^2 + x_1^2 x_2 + x_1^2 x_3 + x_1 x_2^2 + x_1 x_3^2 + x_2^2 x_3 + x_2 x_3^2, \\
 h_2 \, = \, x_0 x_1 x_2 + x_0 x_1 x_3 + x_0 x_2 x_3 + x_1 x_2 x_3.\end{matrix} \]
 In particular, ${\cal L}_{S_4}$ has dimension $n = 2$. The generator of $(I(Y) : \mathfrak{m}^\infty) \cap \mathbb{C}[z_0, z_1, z_2]$ is  \small
\[ (3 z_0 + z_1 - z_2)^2   (z_0 + 3 z_1 + z_2)  (3 z_0 - 3 z_1 + z_2)  (9 z_0^3 + 9 z_0^2 z_1 - 3 z_0^2 z_2 - 9 z_0 z_1^2 - 6 z_0 z_1 z_2 + 4 z_0 z_2^2 + 7 z_1^3 - 3 z_1^2 z_2).  \]
\normalsize
Saturating and then eliminating takes about 15 seconds. Computing four elimination ideals instead takes about 0.03 seconds. For our purposes, we disregard the scheme structure and replace the exponent $2$ of the first linear factor by $1$ in the discriminant polynomial $\Delta_h$. 
\end{example}
Unfortunately, the naive elimination algorithm presented above only terminates in reasonable time for small cases. To practically compute the discriminants in \tabref{tab:main}, we use an alternative approach. We analyze the irreducible components of $Y$ by considering the Jacobian matrix $J_h$ of $h$: 
\[ J_h(x) \, = \, \left ( \frac{\partial h_j}{\partial x_i}\right )_{\substack{i = 0, \ldots, m+1 \\ j = 0, \ldots, n}} \, \, \in \, \mathbb{C}[x_0, \ldots, x_{m+1}]^{(m+2) \times (n+1)}. \]
Its rank over the function field $\mathbb{C}(x_0, \ldots, x_{m+1})$ is denoted by $r$. We define 
\[ U \, = \, \{ x \in \mathbb{P}^{m+1} \, : \, {\rm rank} \, J_h(x) = r \} \quad \text{and} \quad V \, = \, \mathbb{P}^{m+1} \setminus U. \]
Notice that $V \subset \mathbb{P}^{m+1}$ is a closed subvariety defined by the vanishing of the $r \times r$-minors of $J_h(x)$. By construction, we have $Y = Y_0 \cup Y_1$, where $Y_0 = \overline{Y \cap (U \times \mathbb{P}^n)}$ and $Y_1 = Y \cap (V \times \mathbb{P}^n)$. As a consequence, the discriminant is $\nabla_h = \pr_2(Y_0) \cup \pr_2(Y_1)$. In our computations, it has proved effective to compute $\pr_2(Y_0)$ and $\pr_2(Y_1)$ separately. We start with $\pr_2(Y_1)$. 

For this component, our strategy is a rather straightforward symbolic elimination which terminates in less than two minutes for each example in \tabref{tab:main}. First, compute the minimal primes $\mathfrak{p}_1, \ldots, \mathfrak{p}_\ell$ of the ideal $I(V)$ generated by the $r \times r$ minors of $J_h(x)$. Next, for each prime $\mathfrak{p}_i$, one could compute $J_i = (I(Y) + \mathfrak{p}_i) : \mathfrak{m}^\infty$ and eliminate $x_0, \ldots, x_{m+1}$ from $J_i$. Again, for efficiency reasons, we prefer to perform $m+2$ eliminations instead. For $j = 0, \ldots, m+1$, we compute $E_{ij} = (I(Y) + \mathfrak{p}_i + \langle x_j - 1 \rangle ) \cap \mathbb{C}[z_0,\ldots, z_n]$. The elimination ideal $E_{ij}$ defines a variety in $\mathbb{P}^n$. Its components of dimension $n-1$ are hypersurfaces contained in $\pr_2(Y_1)$. Since we know that $\nabla_h$ is a hypersurface, components of smaller dimension can be discarded. Hence, intersecting all minimal primes of $E_{ij}$ of the appropriate dimension for $i = 1, \ldots, \ell$ and $j = 0, \ldots, m+1$ gives the defining equation of $\pr_2(Y_1)$.

We now turn to the computation of $\pr_2(Y_0)$. This is nonempty only when $r < n+1$. 

\begin{lemma} \label{lem:Y0}
If $r = n+1$, then $Y_0 = \emptyset$. If $r < n + 1$, then the variety $Y_0$ is irreducible of dimension $m+1+n-r$. Hence, $\pr_2(Y_0) \neq \emptyset$ if and only if $r < n+1$, and it is irreducible.   
\end{lemma}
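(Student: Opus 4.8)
The plan is to analyse $Y_0$ through the first projection $\pr_1$. First I would rewrite the defining equations of $Y$: viewing $z=(z_0,\dots,z_n)^{\top}$ as a column vector, they say exactly that $J_h(x)\,z=0$, so for fixed $x\in\bbP^{m+1}$ the fibre of $\pr_1\colon Y\to\bbP^{m+1}$ over $x$ is the linear subspace $\bbP(\ker J_h(x))\subseteq\bbP^n$. Restricting to the open set $U$, where $\operatorname{rank}J_h(x)=r$ is constant, the morphism of trivial vector bundles $J_h\colon\mathcal O_U^{\,n+1}\to\mathcal O_U^{\,m+2}$ has kernel of constant rank $n+1-r$, hence a subbundle $\mathcal K\subseteq\mathcal O_U^{\,n+1}$; concretely, on the chart of $U$ where a fixed $r\times r$ minor of $J_h$ is invertible one solves for $r$ of the $z_j$ in terms of the remaining $n+1-r$, which identifies $Y\cap(U\times\bbP^n)$ over that chart with the product of the chart and $\bbP^{n-r}$. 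Thus $Y\cap(U\times\bbP^n)=\bbP(\mathcal K)$ is a $\bbP^{n-r}$-bundle over $U$, with the convention $\bbP^{-1}=\emptyset$.

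The two cases now follow. If $r=n+1$ then $\mathcal K=0$, so $Y\cap(U\times\bbP^n)=\emptyset$, and $Y_0$, being its Zariski closure, is empty. If $r<n+1$, then $U$ is nonempty --- precisely because $r$ is the generic rank of $J_h$, i.e. the rank over $\C(x_0,\dots,x_{m+1})$ --- and, as a dense open subset of the irreducible variety $\bbP^{m+1}$, it is irreducible of dimension $m+1$; a projective bundle over an irreducible base is irreducible, so $\bbP(\mathcal K)$ is irreducible of dimension $(m+1)+(n-r)$, and hence so is its Zariski closure $Y_0$, with $\dim Y_0=m+1+n-r$. Finally, $\pr_2(Y_0)$ is the image of the irreducible projective variety $Y_0$ under $\pr_2$, hence an irreducible closed subvariety of $\bbP^n$; and it is nonempty exactly when $Y_0$ is, that is, exactly when $r<n+1$.

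I do not expect a genuine obstacle here. The heart of the argument is the constant-rank statement that $\ker J_h|_U$ is a subbundle whose projectivisation is an irreducible $\bbP^{n-r}$-bundle over $U$, together with the standard facts that a dense open subset of an irreducible variety is irreducible, that Zariski closure preserves irreducibility and dimension, and that the image of an irreducible variety under a morphism is irreducible. The few points that need care are: checking $U\neq\emptyset$, which is immediate from the definition of $r$; making the local trivialisation of $Y\cap(U\times\bbP^n)$ explicit enough to read off both its irreducibility and the fibre dimension $n-r$; and transferring the dimension count from the locally closed dense subset $Y\cap(U\times\bbP^n)$ to its closure $Y_0$.
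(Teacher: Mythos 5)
Your proposal is correct and follows essentially the same route as the paper: both analyse $Y_0$ via the projection to $\mathbb{P}^{m+1}$, identify the fibres over $U$ as $\mathbb{P}(\ker J_h(x))$, and conclude that $Y\cap(U\times\mathbb{P}^n)$ is a $\mathbb{P}^{n-r}$-bundle over $U$, hence empty when $r=n+1$ and irreducible of dimension $m+1+n-r$ when $r<n+1$, with the same properties passing to the closure $Y_0$ and its image under $\pr_2$. Your explicit local trivialisation via an invertible $r\times r$ minor is a welcome but inessential elaboration of the paper's argument.
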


\begin{proof}
Let $\pi_1: Y \rightarrow \mathbb{P}^{m+1}$ be the coordinate projection $(x,z) \mapsto x$. Notice that the fibre $\pi_1^{-1}(x)$ is the linear space defined by $J_h(x) \cdot z = 0$. Therefore, if ${\rm rank}(J_h(x)) = n+1$, we have $\pi_1^{-1}(x) = \emptyset$. If this is true for all $x \in U$, then $Y \cap (U \times \mathbb{P}^n) = \emptyset$ and hence $Y_0 = \emptyset$. If $r < n+1$, then $\pi_1: Y \cap (U \times \mathbb{P}^n) \rightarrow U$ is surjective. By our description of the fibres, it is a $\mathbb{P}^{n-r}$-bundle over $U$. Hence it is irreducible of dimension $\dim U + n-r$, and so is $Y_0$. 
\end{proof}

\begin{example} \label{ex:S3}
For the linear system ${\cal L}_{S_4}$ from Example \ref{ex:S4}, we have $Y_0 = \emptyset$. The Jacobian matrix $J_h$ has rank $3 = n+1$. Instead, let us consider the invariants of the $S_3$-action which permutes the first three variables $x_0,x_1,x_2$. The linear system ${\cal L}_{S_3} \simeq \mathbb{P}^6$ is generated by
\[ \begin{matrix} h_0 \, = \,  x_3^3, \quad h_1 \, = \, x_0 x_3^2 + x_1 x_3^2 + x_2 x_3^2, \quad h_2 \, = \, x_0^2 x_3 + x_1^2 x_3 + x_2^2 x_3, \\ 
 h_3 \, = \, x_0 x_1 x_3 + x_0 x_2 x_3 + x_1 x_2 x_3, \quad 
 h_4 \, = \, x_0^3 + x_1^3 + x_2^3, \\ 
 h_5 \, = \, x_0^2 x_1 + x_0^2 x_2 + x_0 x_1^2 + x_0 x_2^2 + x_1^2 x_2 + x_1 x_2^2, \quad 
 h_6 \, = \, x_0 x_1 x_2.
 \end{matrix} 
 \]
The rank of $J_h$ is four, so $Y_0$ is irreducible of dimension $3 + 6 - 4 = 5$. 
\end{example}

Motivated by Lemma \ref{lem:Y0}, we assume that $r < n+1$ for the rest of the discussion. We shall identify $\pr_2(Y_0) \subset \mathbb{P}^n$ as the projective dual variety of a unirational variety $Z \subset (\mathbb{P}^n)^*$. That variety is obtained as the closure of the image of the rational map $\phi_h$ defined by $h$: 
\[ \phi_h: U \dashrightarrow (\mathbb{P}^n)^*, \quad \phi_h(x) \, = \, (h_0(x): h_1(x): \cdots: h_n(x)). \]
In symbols, we set $Z = \overline{\phi_h(U)}$. The base locus of this map is $B = \{ x \in U \, : \, h_0(x) = \cdots = h_n(x) = 0 \}$. Let $Z_{\rm sm} \subseteq Z$ denote the open subset of smooth points of $Z$, and let $U_{\rm sm} = \phi_h^{-1}(Z_{\rm sm})$ be the corresponding open subset of $U$. For $x \in U_{\rm sm}$, the projectivization of the row span of $J_h(x)$, viewed as an $(r-1)$-plane in $(\mathbb{P}^n)^*$, is the tangent space of $Z$ at $\phi_h(x)$. In particular, we have $\dim Z = r-1$. Recall that the projective dual variety $Z^* \subset \mathbb{P}^n$ of $Z$ is the closure of all points $z \in \mathbb{P}^n$ such that the hyperplane $H_z = \{ u \in (\mathbb{P}^n)^* \, : \, z_0 \, u_0 + z_1 \, u_1 + \cdots + z_n \, u_n = 0 \}$ contains the tangent space $T_p Z$ of $Z$ at some point $p \in Z_{\rm sm}$. 

\begin{lemma} \label{lem:projdual}
With the above notation, the variety $\pr_2(Y_0)$ is the projective dual variety of $Z$. 
\end{lemma}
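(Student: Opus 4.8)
The plan is to identify $\pr_2(Y_0)$ with the image of the conormal variety of $Z$, which is one of the standard presentations of the projective dual. First I would record that $Y$ is the incidence variety cut out by the linear equations $J_h(x)\cdot z = 0$, so that the fibre of the projection $\pi_1\colon Y\to\bbP^{m+1}$, $(x,z)\mapsto x$, over a point $x$ is $\mathbb{P}(\ker J_h(x))$. Over $x\in U_{\rm sm}$ this kernel is the annihilator of the row span of $J_h(x)$, and, by the tangent-space description recorded just above, that row span is the affine cone $\widehat{T_{\phi_h(x)}Z}$ over the embedded tangent space of $Z$ at $\phi_h(x)$. Hence, writing $Y^\circ = Y\cap(U_{\rm sm}\times\bbP^n)$, a pair $(x,z)$ lies in $Y^\circ$ exactly when $z$, viewed as a linear form on $\C^{n+1}$, annihilates $\widehat{T_{\phi_h(x)}Z}$, i.e.\ when the hyperplane $H_z\subseteq(\bbP^n)^*$ contains the tangent space $T_{\phi_h(x)}Z$.

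Next I would bring in the conormal variety $\operatorname{Con}(Z)\subseteq Z\times\bbP^n$, the Zariski closure of $\{(p,z) : p\in Z_{\rm sm},\ \widehat{T_pZ}\subseteq\ker z\}$, noting that it is irreducible (it is the closure of a projective bundle over the irreducible variety $Z_{\rm sm}$, since $Z=\overline{\phi_h(U)}$ is irreducible) and that, because $\pr_2$ is proper, $\pr_2(\operatorname{Con}(Z))$ equals the closure of $\{z : H_z\supseteq T_pZ\text{ for some }p\in Z_{\rm sm}\}$, which is precisely the dual variety $Z^*$ in the sense recalled above. The morphism $\Phi\colon Y^\circ\to Z\times\bbP^n$, $(x,z)\mapsto(\phi_h(x),z)$, lands inside $\operatorname{Con}(Z)$ by the previous paragraph. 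I would then check that $\Phi$ is dominant onto $\operatorname{Con}(Z)$: picking a dense open $W\subseteq Z_{\rm sm}$ contained in the image $\phi_h(U_{\rm sm})$ (possible since $\phi_h$ is dominant onto $Z$ and $Z_{\rm sm}$ is dense), for any $p\in W$ and any $x\in U_{\rm sm}$ with $\phi_h(x)=p$ the fibre of $Y^\circ$ over $x$ is $\mathbb{P}\bigl((\widehat{T_pZ})^{\perp}\bigr)$, which is exactly the fibre of $\operatorname{Con}(Z)$ over $p$; this is the only place smoothness of $p$ enters, ensuring that over $Z_{\rm sm}$ the conormal variety is the honest projectivized annihilator of the embedded tangent bundle. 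Hence $\Phi(Y^\circ)\supseteq\operatorname{Con}(Z)\cap(W\times\bbP^n)$, so $\Phi(Y^\circ)$ is dense in $\operatorname{Con}(Z)$.

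Finally I would chase closures. By \lemref{lem:Y0} the variety $Y\cap(U\times\bbP^n)$ is irreducible, so its nonempty open subset $Y^\circ$ is dense in it, hence in $Y_0=\overline{Y\cap(U\times\bbP^n)}$; since $\pr_2$ is proper, $\pr_2(Y_0)$ is closed and equals $\overline{\pr_2(Y^\circ)}$. As the projection $\pr_2|_{Y^\circ}$ factors as $\pr_2|_{\operatorname{Con}(Z)}\circ\Phi$ and $\Phi(Y^\circ)$ is dense in $\operatorname{Con}(Z)$, we obtain
$\overline{\pr_2(Y^\circ)}=\overline{\pr_2(\Phi(Y^\circ))}=\overline{\pr_2(\operatorname{Con}(Z))}=\pr_2(\operatorname{Con}(Z))=Z^{*}$,
which is the assertion. (If $r=n+1$ both sides are empty, but that case is excluded by the running assumption.) I expect the main obstacle to be the bookkeeping around the rational map $\phi_h$ and its base locus $B$: one must keep track of the chain of dense (open or merely constructible) subsets $U$, $U\setminus B$, $U_{\rm sm}$, $\phi_h(U_{\rm sm})$ and verify that each is dense, so that dominance and density genuinely propagate through $\Phi$ and the two projections. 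The geometric inputs — that over a smooth point the conormal fibre is the tangent-space annihilator, and that the row span of $J_h$ computes that tangent space — are already in hand from the discussion preceding the statement.
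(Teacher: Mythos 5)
Your argument is correct and is essentially the paper's proof: the same three inputs are used (the identification of the row span of $J_h(x)$ with the cone over $T_{\phi_h(x)}Z$ for $x\in U_{\rm sm}$, density of $\phi_h(U_{\rm sm})$ in $Z_{\rm sm}$, and the irreducibility of $Y\cap(U\times\bbP^n)$ from the proof of \lemref{lem:Y0} together with closedness of projections from $\bbP^{m+1}\times\bbP^n$), only packaged through an explicit conormal variety and a factorization $\pr_2=\pr_2\circ\Phi$, whereas the paper runs the same content as a direct chain of set-theoretic equalities ending in $\pr_2(Y_0)$. No gap; the conormal-variety bookkeeping is a cosmetic reformulation rather than a different route.
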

\begin{proof}
This follows from the following chain of equalities: 
\begin{align} \label{eq:chainofeq}
\begin{split}
Z^* & \, = \, {\rm cl} \{ z \in \mathbb{P}^n \, : \, H_z \text{ contains } T_p Z \text{ for some } p \in Z_{\rm sm} \} \\ 
& \, = \, {\rm cl} \{ z \in \mathbb{P}^n \, : \, H_z \text{ contains } T_p Z \text{ for some } p \in \phi_h(U_{\rm sm}) \} \\
& \, = \, {\rm cl} \{ z \in \mathbb{P}^n \, : \, \exists \,  x \in U_{\rm sm} \text{ such that } J_h(x) \cdot z = 0 \} \\ 
& \, = \, \overline{ \pr_2(Y \cap (U_{\rm sm} \times \mathbb{P}^n))} \\ 
& \, = \, \pr_2(\overline{Y \cap (U_{\rm sm} \times \mathbb{P}^n)}) \, = \, \pr_2(Y_0).
\end{split}
\end{align}
Here ${\rm cl}$ denotes the Zariski closure. 
The first equality is the definition of $Z^*$. 
The second equality holds because $\phi_h(U_{\rm sm})$ is dense in the smooth points $Z_{\rm sm}$. 
The third equality uses $p = \phi_h(x)$ for some $x \in U_{\rm sm}$ and follows from the observation that the tangent space $T_p Z$ is the projectivized row span of $J_h(x)$. 
The fourth equality holds by definition of $Y$. 
The fifth equality uses the fact that the coordinate projection of a closed subvariety in $\mathbb{P}^{m+1} \times \mathbb{P}^n$ is closed. 
The last equality holds because, by the same argument as that in the proof of \lemref{lem:Y0}, $Y \cap (U_{\rm sm} \times \mathbb{P}^n)$ is irreducible of dimension $m+1+n-r$. 
Hence its closure, which is trivially contained in $Y_0$, must be equal to $Y_0$ for dimension reasons. 
\end{proof}

We can parametrize ${\rm pr}_2(Y_0)$ as follows. Let $K_h(x) \in \mathbb{C}(x_0,\ldots, x_{m+1})^{(n+1) \times (n+1-r)}$ be a kernel matrix of $J_h(x)$ over the field of rational functions in $x$. That is, $K_h(x)$ has rank $n+1-r$ and $J_h(x)K_h(x) = 0$. By \eqref{eq:chainofeq}, the image of the rational map 
\begin{equation} \label{eq:psih} \psi_h: \mathbb{P}^{m+1} \times \mathbb{P}^{n-r} \, \dashrightarrow \, \mathbb{P}^n, \quad \quad \psi_h(x, v) \, = \, K_h(x) \cdot v\end{equation}
is dense in $Z^* = {\rm pr}_2(Y_0)$. Here $K_h(x) \cdot v$ is the matrix-vector product of $K_h(x)$ with an $(n+1-r)$-vector of homogeneous coordinates for $v$, and the result is interpreted as a vector of homogeneous coordinates on $\mathbb{P}^n$. As a consequence, the dimension of ${\rm pr}_2(Y_0)$ is the rank of the Jacobian matrix of $\psi_h$ over $\mathbb{C}(x,v)$ minus one. If $\dim {\rm pr}_2(Y_0) < n-1$, then ${\rm pr}_2(Y_0) \subset {\rm pr}_2(Y_1)$ by the fact that $\nabla_h$ is pure of dimension $n-1$. In that case, the elimination algorithm outlined above for computing ${\rm pr}_2(Y_1)$ computes the full discriminant $\Delta_h$. 
\begin{example} \label{ex:S3-2}
For the linear system ${\cal L}_{S_3}$ from Example \ref{ex:S3}, our algorithm computes that ${\rm pr}_2(Y_1)$ is a hypersurface of degree $13 = 1 + 8 + 4$ with three components: 
\begin{align*}
\Delta_{h,1} \, = \, & \, 3 z_4- 3 z_5 + z_6, \\ 
\Delta_{h,2} \, = \, & \, 2187 z_0^2 z_4^6 - 1458 z_0^2 z_4^5 z_6 - \cdots + 4 z_2 z_3^5 z_5 z_6 + 15 z_3^6 z_4^2 + 12 z_3^6 z_4 z_5 - 4 z_3^6 z_4 z_6, \\
\Delta_{h,3} \, = \,
& \, 9 z_0^2 z_4^2 + 36 z_0^2 z_4 z_5 + 6 z_0^2 z_4 z_6 + 36 z_0^2 z_5^2 + 12 z_0^2 z_5 z_6 + z_0^2 z_6^2 - 18 z_0 z_1 z_2 z_4 - 36 z_0 z_1 z_2 z_5, \\ 
& \, - 6 z_0 z_1 z_2 z_6 - 18 z_0 z_1 z_3 z_4 - 36 z_0 z_1 z_3 z_5 - 6 z_0 z_1 z_3 z_6 + 4 z_0 z_2^3 + 12 z_0 z_2^2 z_3 + 12 z_0 z_2 z_3^2 \\ &+ 4 z_0 z_3^3 + 12 z_1^3 z_4 + 24 z_1^3 z_5 + 4 z_1^3 z_6 - 3 z_1^2 z_2^2 - 6 z_1^2 z_2 z_3 - 3 z_1^2 z_3^2.
\end{align*}
The octic $\Delta_{h,2}$ has 233 terms. 
It can be found at \cite{code}. 
The Jacobian matrix of $\psi_h$ has rank $5$, so ${\rm pr}_2(Y_0)$ has codimension 2. 
In fact, ${\rm pr}_2(Y_0)$ is the intersection of the hyperplane $\Delta_{h,1} = 0$ and the octic hypersurface $\Delta_{h,2} = 0$.  
The ideal $\langle \Delta_{h,1}, \Delta_{h,2} \rangle$ is primary, and its radical is the vanishing ideal of ${\rm pr}_2(Y_0)$. 
Since $\dim Y_0 = 5$ by Example \ref{ex:S3}, we learn that the generic fibre of ${\rm pr}_2: Y_0 \rightarrow {\rm pr}_2(Y_0)$ has dimension one. 
We have $\Delta_h = \prod_{i=1}^3 \Delta_{h,i}$.
\end{example}

If $\dim {\rm pr}_2(Y_0) = n-1$, then its defining equation is a factor of $\Delta_h$, and we must explain how to compute it. The following lemma states how to certify correctness of a candidate.  

\begin{lemma}
If $\dim {\rm pr}_2(Y_0) = n-1$, then its defining equation is the unique, up to scalar multiple, irreducible polynomial $\Delta_{h,0} \in \mathbb{C}[z_0, \ldots, z_n]$ satisfying $\Delta_{h,0} \circ \psi_h = 0$. If $h_i$ has coefficients in $\mathbb{Q}$ for $i = 0, \ldots, n$, then $\Delta_{h,0}$ can be scaled to have integer coefficients. 
\end{lemma}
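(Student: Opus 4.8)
The plan is to realize $\Delta_{h,0}$ as the defining equation of the hypersurface ${\rm pr}_2(Y_0)$ and then exploit that this hypersurface is irreducible. Recall from \lemref{lem:Y0} that $Y_0$ is irreducible, hence so is its image ${\rm pr}_2(Y_0)$ under the morphism ${\rm pr}_2$; moreover ${\rm pr}_2(Y_0)$ is closed (as the projection of a closed subvariety of $\mathbb{P}^{m+1}\times\mathbb{P}^n$ along the complete factor $\mathbb{P}^{m+1}$), and by \lemref{lem:projdual} together with the discussion around \eqref{eq:psih} it equals the Zariski closure $\overline{\im \psi_h}$ of the image of the rational map $\psi_h$. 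Under the hypothesis $\dim {\rm pr}_2(Y_0) = n-1$ it is therefore an irreducible hypersurface in $\mathbb{P}^n$, so — working in the UFD $\mathbb{C}[z_0,\ldots,z_n]$, in which height-one primes are principal — its homogeneous vanishing ideal is generated by an irreducible homogeneous polynomial $\Delta_{h,0}$, unique up to a nonzero scalar.

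Next I would verify that this $\Delta_{h,0}$ satisfies $\Delta_{h,0}\circ\psi_h = 0$ and that this property characterizes it. Clearing the denominators in the kernel matrix $K_h(x)$, the map $\psi_h(x,v) = K_h(x)\cdot v$ is given by a tuple of polynomials, bihomogeneous in $(x,v)$, so the composite $\Delta_{h,0}(\psi_h(x,v))$ is a bihomogeneous polynomial; it vanishes on the dense locus where $\psi_h$ is defined, hence vanishes identically, which is the meaning of $\Delta_{h,0}\circ\psi_h = 0$. Conversely, if $P \in \mathbb{C}[z_0,\ldots,z_n]$ is irreducible with $P\circ\psi_h = 0$, then $V(P) \supseteq \im\psi_h$, hence $V(P) \supseteq \overline{\im\psi_h} = {\rm pr}_2(Y_0) = V(\Delta_{h,0})$; since $P$ is irreducible, $V(P)$ is itself an irreducible hypersurface, so $V(P) = V(\Delta_{h,0})$ and therefore $P = c\,\Delta_{h,0}$ for some $c \in \mathbb{C}^{\times}$. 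This establishes both existence and uniqueness.

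For the last assertion, suppose each $h_i \in \mathbb{Q}[x_0,\ldots,x_{m+1}]_d$. Then $J_h(x)$ has entries in $\mathbb{Q}[x]$, its rank $r$ over $\mathbb{Q}(x)$ agrees with its rank over $\mathbb{C}(x)$ (ranks are detected by the vanishing of minors, which are polynomials over $\mathbb{Q}$), and a basis of the kernel of $J_h(x)$ over $\mathbb{Q}(x)$, rescaled to clear denominators, yields a matrix $K_h(x) \in \mathbb{Q}[x]^{(n+1)\times(n+1-r)}$; hence $\psi_h$ is defined over $\mathbb{Q}$. Consequently ${\rm pr}_2(Y_0) = \overline{\im\psi_h}$ is stable under the natural action of every $\sigma \in {\rm Aut}(\mathbb{C}/\mathbb{Q})$ on $\mathbb{P}^n$, so $\sigma(\Delta_{h,0})$ again generates the vanishing ideal of ${\rm pr}_2(Y_0)$ and thus $\sigma(\Delta_{h,0}) = c_\sigma\,\Delta_{h,0}$ for some $c_\sigma \in \mathbb{C}^\times$. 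Normalizing $\Delta_{h,0}$ so that its lexicographically leading nonzero coefficient equals $1$ forces $c_\sigma = 1$ for all $\sigma$, so $\Delta_{h,0} \in \mathbb{Q}[z_0,\ldots,z_n]$; clearing denominators and dividing by the content then produces the desired primitive integer representative.

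The only delicate bookkeeping is around the rational map $\psi_h$: making precise that ``$\Delta_{h,0}\circ\psi_h = 0$'' refers to the bihomogeneous polynomial identity obtained after clearing the denominators of $K_h(x)$, and using that the density of $\im\psi_h$ in ${\rm pr}_2(Y_0)$ — already established before the lemma — is exactly what converts vanishing on the image into vanishing on the whole hypersurface. Everything else is a routine application of the correspondence between irreducible hypersurfaces and principal primes in $\mathbb{C}[z_0,\ldots,z_n]$, together with a standard Galois-descent argument.
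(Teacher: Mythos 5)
Your argument is correct and is essentially the paper's own proof spelled out in full: the paper disposes of the lemma in one line as "an immediate consequence of the fact that ${\rm pr}_2(Y_0)$ is parametrized by $\psi_h$," and your steps (irreducibility and closedness of ${\rm pr}_2(Y_0)$ from Lemmas \ref{lem:Y0}--\ref{lem:projdual}, density of $\im \psi_h$, the principal-prime characterization of an irreducible hypersurface, and descent to $\Q$ because $\psi_h$ is defined over $\Q$) are exactly the content being invoked. No issues.
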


\begin{proof}
This is an immediate consequence of the fact that ${\rm pr}_2(Y_0)$ is parametrized by $\psi_h$. 
\end{proof}
In particular, if $\Delta_{h,0} \in \mathbb{Z}[z_0, \ldots, z_n]$ is a candidate for the defining equation of ${\rm pr}_2(Y_0)$, then it can be certified a posteriori by checking that $\Delta_{h,0}$ is irreducible, and it vanishes on the coordinates of the parametrization $\phi_h$, which are rational functions with coefficients in $\mathbb{Q}$. 

It remains to explain how we compute a candidate polynomial $\Delta_{h,0}$. For this we use interpolation. We fix a finite set of monomials $z^\alpha, \alpha \in A \subset \mathbb{N}^n$ and make the Ansatz
\begin{equation} \label{eq:ansatz} \Delta_{h,0} \, = \, \sum_{\alpha \in A} c_\alpha \, z^\alpha\end{equation}
for unknown coefficients $c_\alpha$.
 The parametrization \eqref{eq:psih} makes it easy to sample rational points on ${\rm pr}_2(Y_0)$. Each such a sample point $z^*$ gives a $\mathbb{Q}$-linear condition on the coefficients $c_\alpha$ by requiring that \eqref{eq:ansatz} vanishes at $z^*$. If the irreducible defining equation of ${\rm pr}_2(Y_0)$ fits the Ansatz \eqref{eq:ansatz}, then after gathering sufficiently many sample points, there is only one solution to these linear equations. If it does not, then sufficiently many samples make our linear equations infeasible, meaning that only $c_\alpha = 0$ is a solution, and we must change the Ansatz. 
 
 Choosing the exponents $A$ may be challenging. A first strategy is to include all monomials of a fixed degree $\delta$. One can then simply increase $\delta$ until a solution is found. A slightly more sophisticated approach takes the multi-homogeneity of the discriminant into account. This applies when the generators $h_i$ of the linear system are homogeneous with respect to a $(\mathbb{C}^*)^k$ action for some $k \geq 2$. Such an action induces a $\mathbb{Z}^k$-grading on the polynomial ring $\mathbb{C}[z_0, \ldots, z_n]$, with respect to which $\Delta_h$ is homogeneous. We illustrate this with two examples.  
 
\begin{example} \label{ex:multigrading}
 The polynomials $h_0, \ldots, h_6$ from Example \ref{ex:S3} are homogeneous with respect to the $(\mathbb{C}^*)^2$-action $(\lambda, \mu) \cdot (x_0, x_1, x_2, x_3) = (\lambda x_0, \lambda x_1, \lambda x_2, \mu x_3)$. Their bidegrees are 
 \[ (0, 3), \quad (1, 2), \quad (2, 1), \quad (2, 1), \quad (3, 0), \quad (3, 0), \quad \text{ and } (3, 0) \]
 respectively. The discriminants $\Delta_{h,i}$ in Example \ref{ex:S3-2} are homogeneous with respect to the corresponding $\mathbb{Z}^2$-grading given by $\deg(z_0) = (0,3), \, \deg(z_1) = (1,2), \, \ldots, \, \deg(z_6) = (3,0)$.
There are only $22$ monomials of bidegree $(6,6)$ in this grading, all of which appear in $\Delta_{h,3}$. 
\end{example}
\begin{example} \label{ex:multigrading2}
For $G = \mathbb{Z}_3$, we have $n = 7$ and our linear system is generated by 
\[ \begin{matrix} x_3^3, \, \,  x_0 x_3^2 + x_1 x_3^2 + x_2 x_3^2, \, \,  x_0^2 x_3 + x_1^2 x_3 + x_2^2 x_3, \, \,  x_0 x_1 x_3 + x_0 x_2 x_3, \, \,  x_1 x_2 x_3 \\
x_0^3 + x_1^3 + x_2^3, \, \,  x_0 x_1^2 + x_0^2 x_2 + x_1 x_2^2, \, \,  x_0^2 x_1 + x_1^2 x_2 + x_0 x_2^2, \, \,  x_0 x_1 x_2.\end{matrix} \]
These are homogeneous with respect to the $(\mathbb{C}^*)^2$ action $(\lambda, \mu) \cdot (x_0,x_1,x_2,x_3) = (\lambda x_0, \lambda x_1, \lambda x_2, \mu x_3)$. The induced bigrading on $\mathbb{C}[z_0, \ldots, z_7]$ is $\deg(z_0) = (0,3), \, \deg(z_1) = (1,2), \, \ldots, \, \deg(z_7) = (3,0)$. We find that $\Delta_{h,0}$ is homogeneous of bidegree $(18,6)$. Out of the 667 monomials of bidegree $(18,6)$, 510 appear in $\Delta_{h,0}$, see \cite{code}. The number 667 should be compared to the number of monomials in 8 variables of total degree 8, which is 6435.
\end{example}

In our current implementation, we identified multigradings like that of Examples \ref{ex:multigrading} and \ref{ex:multigrading2} manually. 
A list is found at \cite{code}. 
This is crucial to make an efficient Ansatz \eqref{eq:ansatz}.

\section{Further examples}\label{sec:further_examples}

Section \ref{sec:discriminants} concludes our explanation of how the Galois groups in Table \ref{tab:main} are computed. 
Since all computations are certified, the results available at \cite{code} constitute a proof of Theorem \ref{thm:main}. 
This section offers more computational results obtained via the same methods in different~contexts.

\subsection{Crystallographic cubic surfaces} \label{sec:crystal}

Crystallographic groups of dimension $d$ are finite subgroups of $\operatorname{GL}_d(\Q)$.
In particular, all subgroups of $S_5$ are crystallographic groups of dimension 4.
They are classified for $d\le 4$ in the tables of \cite{BrownEtAl_1978} and are contained in the database \texttt{CrystCat}, which is part of the computer algebra system \texttt{gap}.
These groups have realisations as subgroups of $\operatorname{GL}_d(\Z)$.

For $d=4$, there are 227 such groups up to conjugation in $\operatorname{GL}_4(\Q)$. 
They are indexed by two integers $s$ and $q$ following the indexing of \texttt{CrystCat}. 
The indices correspond respectively to the \emph{crystal system} and to the \emph{$\Q$-class}.
There are 33 crystal systems, and each has several $\Q$-classes.
We will denote these groups by $C_{N}$ where $N=s.q$, e.g. $C_{21.3}$ is the crystallographic group corresponding to crystal system $21$ and $\Q$-class $3$.
The corresponding groups act on $\bbP^3$ by linear automorphisms. In particular, they act faithfully on the lines of smooth cubic surfaces defined by polynomials which are invariant under the action.
However, 146 of these groups have no nonzero invariants.
In 50 cases, there exist invariant polynomials, but they all define singular cubic surfaces.
In 6 cases among the remaining 31, $\mathcal L_{C_N}$ is zero-dimensional. 
\begin{theorem}
Following the notation of \tabref{tab:main},  \tabref{tab:crystallographic} reports these 31 groups, the corresponding monodromy group, the intersection between the action of the group and the monodromy group, i.e., $I_{G} = \Gal(\mathcal L_{G})\cap G$, and the degree of the discriminant locus.
\end{theorem}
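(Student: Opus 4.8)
The proof is entirely computational: it consists in running, for each of the $31$ crystallographic groups in question, the certified pipeline developed in Sections~\ref{sec:lines}--\ref{sec:discriminants}. The plan is as follows. First I would extract from the \texttt{CrystCat} database the $\operatorname{GL}_4(\Z)$-realisations of all $227$ conjugacy classes of four-dimensional crystallographic groups, and for each one compute by linear algebra over $\Q$ the invariant subspace $\C[x_0,x_1,x_2,x_3]_3^{C_N}$; this discards the $146$ groups with no nonzero cubic invariant. For each of the remaining $81$ groups I would test whether the linear system $\mathcal L_{C_N}$ contains a smooth member, equivalently whether $V(\partial f/\partial x_0,\dots,\partial f/\partial x_3)=\emptyset$ in $\bbP^3$ for a generic $f\in\mathcal L_{C_N}$; this eliminates the $50$ groups whose invariant cubics are all singular, leaving the $31$ groups appearing in \tabref{tab:crystallographic}. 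For the $6$ among these with $\dim\mathcal L_{C_N}=0$ there is a single smooth fibre, so $\nabla_{C_N}=\emptyset$ and the monodromy group, hence $I_{C_N}$, is trivial; nothing further is required.

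For each of the remaining $25$ groups with $\dim\mathcal L_{C_N}=n>0$, I would fix a base point $b$ with $X_b$ smooth and proceed exactly as in the proof of \thmref{thm:main}. Using the elimination and projective-duality techniques of Section~\ref{sec:discriminants} I would compute the defining equation of $\nabla_{C_N}$, read off its degree, and certify that a randomly chosen line $L\ni b$ meets $\nabla_{C_N}$ in exactly $\deg\nabla_{C_N}$ distinct points, so that \thmref{thm:lefschetz_zariski} guarantees that the simple loops around $L\cap\nabla_{C_N}$ generate $\pi_1(\mathcal L_{C_N}\setminus\nabla_{C_N},b)$. For each such generating loop I would compute the induced automorphism of $H_2(X_b)$ by analytic continuation of the period matrix along its Picard--Fuchs equations, as in Section~\ref{sec:numerical_integration}, recovering the integer $7\times 7$ matrix exactly from a certified numerical approximation of precision below one half, and thereby the induced permutation of the $27$ lines; the group generated by these permutations is $\Gal(\mathcal L_{C_N})$. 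Independently, the action $\varphi\colon C_N\to W(E_6)\subset S_{27}$ on the $27$ lines is obtained from formulas \eqref{eq:group_action} and \eqref{eq:Matg}, which is clean here since the $\operatorname{GL}_4(\Z)$-realisation forces $\det(g)=\pm1$; intersecting $\varphi(C_N)$ with $\Gal(\mathcal L_{C_N})$ inside $S_{27}$ yields $I_{C_N}$, and the abstract isomorphism types listed in \tabref{tab:crystallographic} follow from \texttt{gap}'s structure-identification routines applied to these permutation groups.

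The main obstacle I expect is the one that already limits the symmetric case: obtaining the defining equation of $\nabla_{C_N}$ for the larger linear systems. Naive saturation-and-elimination does not terminate, so one must split $Y$ into the components $Y_0$ and $Y_1$ as in Section~\ref{sec:discriminants}, treat $\pr_2(Y_0)$ via the parametrisation $\psi_h$ and interpolation against a carefully chosen (multi-)graded monomial Ansatz, and $\pr_2(Y_1)$ via chart-wise elimination over the minimal primes of the $r\times r$ minors of $J_h$; identifying the correct multigrading for each group is the delicate bookkeeping. A secondary concern is the numerical analytic continuation: the Picard--Fuchs operators of the higher-dimensional families may have larger order and more singular points, so the certified working precision must be pushed far enough to pin down the integer monodromy matrices. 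As in the symmetric case, however, this remains within reach of the \texttt{lefschetz-family} and \texttt{ore\_algebra} implementations, and once the computation succeeds every entry of \tabref{tab:crystallographic} is certified, which proves the theorem.
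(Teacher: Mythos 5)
Your proposal is correct and follows essentially the same route as the paper: the theorem is proved by running the certified pipeline of Sections~\ref{sec:lines}--\ref{sec:discriminants} (discriminant computation to certify a generic line via \thmref{thm:lefschetz_zariski}, then monodromy on $H_2(X_b)$ via certified analytic continuation of periods along Picard--Fuchs equations) on each positive-dimensional $\mathcal L_{C_N}$, together with the group-action computation of \secref{sec:group_action} to obtain $\varphi(C_N)\subset S_{27}$ and hence $I_{C_N}$, after the same $227\to 81\to 31$ winnowing by invariant cubics and smoothness. The paper offers no separate argument beyond these certified computations (available at \cite{code}), so your plan matches it.
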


\begin{table}[p]
\footnotesize
\centering
\makebox[\linewidth][c]{
\begin{tabular}{cccccccc}
\toprule
\multirow{2}{*}{$N$} & \multirow{2}{*}{$C_N$} &\multirow{2}{*}{\hspace{-.5em} $\dim {\cal L}_{C_N}$\hspace{-.5em}} & \multirow{2}{*}{$\Gal(\mathcal L_{C_N})$}& \multirow{2}{*}{$I_{C_N}$} &\multicolumn{2}{c}{Orbit structure} & \multirow{2}{*}{$\deg \nabla_G$} \\ 
 &  & & & & $G$ & $\Gal(\mathcal L_{C_n})$ &  \\ \midrule
\textbf{31.4} & $S_5$ & 0 & $-$ & $-$ & $12^{1},15^{1}$ & $1^{27}$ & $-$ \\[.2em]
\textbf{31.3} & $A_5$ & 0 & $-$ & $-$ & $6^{2},15^{1}$ & $1^{27}$ & $-$ \\[.2em]
\textbf{31.1} & $F_5$ & 0 & $-$ & $-$ & $2^{1},5^{1},10^{2}$ & $1^{27}$ & $-$ \\[.2em]
29.7 & $\operatorname{SO}^+_4(\mathbb F_2)$ & 0 & $-$ & $-$ & $6^{1},9^{1},12^{1}$ & $1^{27}$ & $-$ \\[.2em]
29.4 & $\Z_3^2 \rtimes\Z_4$ & 0 & $-$ & $-$ & $6^{3},9^{1}$ & $1^{27}$ & $-$ \\[.2em]
29.3 & $S_3^2$ & 0 & $-$ & $-$ & $3^{2},6^{2},9^{1}$ & $1^{27}$ & $-$ \\[.2em]
29.1 & $\Z_3\times S_3$ & 1 & $\Z_3$ & $\Z_3$ & $3^{2},6^{2},9^{1}$ & $1^{9},3^{6}$ & $2^1$ \\[.2em]
\textbf{27.3} & $D_5$ & 1 & $\Z_1$ & $\Z_1$ & $1^{2},5^{5}$ & $1^{27}$ & $2^1$ \\[.2em]
\textbf{27.1} & $\Z_5$ & 3 & $\Z_5$ & $\Z_5$ & $1^{2},5^{5}$ & $1^{2},5^{5}$ & $4^1$ \\[.2em]
\textbf{24.3} & $S_4$ & 2 & $\Z_2^2$ & $\Z_1$ & $3^{1},12^{2}$ & $1^{3},2^{6},4^{3}$ & $1^3,3^1$ \\[.2em]
\textbf{24.1} & $A_4$ & 2 & $\Z_2^2$ & $\Z_1$ & $3^{1},6^{2},12^{1}$ & $1^{3},2^{6},4^{3}$ & $1^3,3^1$ \\[.2em]
22.8 & $S_3^2$ & 1 & $\Z_3$ & $\Z_1$ & $6^{3},9^{1}$ & $1^{9},3^{6}$ & $1^2$\\[.2em]
22.5 & $\Z_3 \rtimes S_3$ & 1 & $\Z_3$ & $\Z_1$ & $3^{6},9^{1}$ & $1^{9},3^{6}$ & $1^2$ \\[.2em]
22.3 & $\Z_3\times S_3$ & 2 & $\Z_3^2$ & $\Z_3$ & $6^{3},9^{1}$ & $3^{3},9^{2}$ & $1^1,2^1$ \\[.2em]
22.1 & $\Z_3^2$ & 3 & $\Z_3^3$ & $\Z_3^2$ & $3^{6},9^{1}$ & $9^{3}$ & $2^2$ \\[.2em]
21.3 & $D_6$ & 1 & $\Z_2$ & $\Z_2$ & $1^{1},2^{2},3^{2},4^{1},6^{2}$ & $1^{7},2^{10}$ & $1^2$ \\[.2em]
21.1 & $\Z_6$ & 3 & $\Z_2\times \Z_6$ & $\Z_6$ & $1^{1},2^{4},3^{2},6^{2}$ & $1^{1},2^{2},4^{1},6^{1},12^{1}$ & $2^2$ \\[.2em]
17.1 & $S_3$ & 3 & $S_3$ & $\Z_1$ & $1^{3},2^{3},3^{4},6^{1}$ & $1^{3},2^{3},3^{4},6^{1}$ & $4^1$ \\[.2em]
\textbf{14.8} & $D_6$ & 3 & $D_6$ & $\Z_2$ & $3^{1},6^{4}$ & $1^{3},2^{3},6^{3}$ & $1^3,3^1$ \\[.2em]
\textbf{14.3} & $\tS$ & 3 & $D_6$ & $\Z_1$ & $3^{7},6^{1}$ & $1^{3},2^{3},6^{3}$ & $1^3,3^1$ \\[.2em]
\textbf{14.2} & $\Z_6$ & 4 & $\Z_6\times S_3$ & $\Z_6$ & $3^{1},6^{4}$ & $3^{1},6^{1},18^{1}$ & $1^2,2^1,3^1$ \\[.2em]
\textbf{12.3} & $D_4$ & 3 & $\Z_2^3$ & $\Z_2$ & $1^{1},2^{1},4^{4},8^{1}$ & $1^{3},2^{2},4^{5}$ & $1^4,3^1$  \\[.2em]
\textbf{12.1} & $\Z_4$ & 4 & $\Z_2^2\times \Z_4$ & $\Z_4$ & $1^{1},2^{3},4^{5}$ & $1^{1},2^{1},4^{2},8^{2}$ & $1^3,2^1,3^1$ \\[.2em]
11.1 & $\Z_3$ & 7 & $\Z_3\times S_3^2$ & $\Z_3$ & $1^{9},3^{6}$ & $9^{1},18^{1}$ & $8^1$ \\[.2em]
\textbf{8.3} & $S_3$ & 6 & $S_3^2$ & $\Z_1$ & $3^{3},6^{3}$ & $3^{3},18^{1}$ & $1^1,4^1,8^1$ \\[.2em]
\textbf{8.1} & $\Z_3$ & 7 & $\Z_3\times S_3^2$ & $\Z_3$ & $3^{9}$ & $9^{1},18^{1}$ & $2^1,4^1,8^1$ \\[.2em]
\textbf{5.1} & $K_4$ & 4 & $\Z_2^4$ & $\Z_2^2$ & $1^{3},2^{6},4^{3}$ & $1^{3},4^{6}$ & $1^5,3^1$ \\[.2em]
\textbf{4.1} & $\nK$ & 7 & $\Z_2^2\times S_4$ & $\Z_2^2$ & $1^{1},2^{5},4^{4}$ & $1^{1},2^{2},6^{1},16^{1}$ & $2^1,4^3$ \\[.2em]
\textbf{3.1} & $\DT$ & 9 & $D_4\times S_4$ & $\Z_2$ & $1^{7},2^{10}$ & $1^{1},4^{1},6^{1},16^{1}$ & $4^2,8^1$ \\[.2em]
\textbf{2.1} & $\Z_2$ & 12 & $W(F_4)$ & $\Z_2$ & $1^{3},2^{12}$ & $3^{1},24^{1}$ & $10^1,12^1$ \\[.2em]
\textbf{1.1} & $\Z_1$ & 19 & $W(E_6)$ & $\Z_1$ & $1^{27}$& ${27}^{1}$& $32^1$ \\[.2em]
  \bottomrule
\end{tabular}}
\caption{The crystallographic groups $C_N$ such that $\mathcal L_{C_N}$ is non-empty and its generic element is smooth, along with the corresponding monodromy groups, and intersections between the permutation group induced by the action of the group (see Section \ref{sec:group_action}) and the monodromy group: $I_{G} = \Gal(\mathcal L_{G})\cap G$. The groups $\Z_3 \rtimes S_3$, $\Z_3^2 \rtimes\Z_4$ and $\operatorname{SO}^+_4(\mathbb F_2)$ are respectively a semidirect product of $\Z_3$ with $S_3$ (group id \href{https://www.lmfdb.org/Groups/Abstract/18.4}{18.4}), a semidirect product of $\Z_3^2$ with $\Z_4$ (group id~\href{https://www.lmfdb.org/Groups/Abstract/36.9}{36.9}) and the special orthogonal group of plus type in dimension 4 over $\mathbb F_2$ (group id \href{https://www.lmfdb.org/Groups/Abstract/72.40}{72.40}).
The groups in bold are the subgroups of $S_5$. In the column labeled ``Orbit structure'', the string $3^{1},6^{2},12^{1}$ means that the monodromy action on the $27$ lines has one orbit of size three, two of size six and one of size twelve.
}\label{tab:crystallographic}
\end{table}

We note that we obtain the following Galois groups that were not present in \tabref{tab:main}: $\Z_2, \Z_3, \Z_2^2, \Z_6, \Z_2\times\Z_6, \Z_3^3$, and $\Z_3^2$.

Yet another computation of a monodromy group of a family of cubic surfaces appears in \cite{MedranoMartinDelCampo_2022}, which we also independently recover with the methods presented here: 
the unitary group $U_3(\mathbb F_4)\subset W(E_6)$ is the monodromy group of cubic surfaces that are triple covers of $\mathbb P^2$ ramified along a cubic curve, that is, with defining equation of the form $x_0^3 - f_3$ for $f_3\in \C[x_1,x_2,x_3]_3$ a polynomial of degree three. The corresponding linear system has projective dimension $n=10$. The discriminant has one component of degree 12, and one linear component.

These results raise a natural question: can every subgroup of $W(E_6)$ be realised as a monodromy group of a family/pencil of generically smooth cubic surfaces?
Note that it is known that every subgroup of $W(E_6)$ appears as a Galois group of a cubic surface defined over $\Q$ \cite[Theorem 0.1]{ElsenhansJahnel_2015}.
Up to abstract isomorphism (as opposed to ``up to conjugation''), the smallest subgroup we are missing is $D_4$.

We mention a strategy to construct families with a given Galois group. The generators of ${\rm Gal}({\cal L}_G)$ found in our computation correspond to the $\deg \nabla_G$ intersection points of a generic line $L$ with the discriminant $\nabla_G$. Degenerating $L$ to a line $L'$ which intersects $\nabla_G$ non-transversally, we can remove specific generators. This is left for future research. 

The data of the monodromy groups of crystallographic cubic surfaces is available at \cite{code}.

\subsection{Quartic surfaces}\label{sec:quartic}

In this section we apply our methods to smooth quartic surfaces in $\mathbb{P}^3$, which are K3 surfaces.
Similar to \secref{sec:introduction}, we let $G$ be a group acting on $\C[x_0, \dots, x_3]_4$ and we define $\mathcal L^4_G$ to be the linear system of quartic surfaces whose defining equation is invariant under $G$.
We use the methods mentioned above to compute the monodromy representation $\pi_1(\mathcal L^4_G \setminus \nabla_G, b)\to H_2(X_b)$ on the K3 lattice.
Unlike for cubic surfaces, the monodromy group is not necessarily finite, and the monodromy action is not entirely determined by the induced action on rational curves lying on the surface.
However, a generic K3 surface in these families contains a finite number of smooth curves of a given degree, and these are permuted by monodromy.
For example, it is known that a generic surface of the family of \emph{Heisenberg-invariant K3 surfaces} contains 320 conics \cite{Eklund_2018}, and the Galois group is $\Z_2^{10}$ \cite{Bouyer_2020}.
With the methods presented here, these two facts can be readily computed.
Our method is fundamentally different from that of \cite{Bouyer_2020} in that we never compute the equations of the conics.
This avoids the use of symbolic methods which might in general involve extensions over large number fields. 

In comparison with cubic surfaces, the computations reported here were more demanding. 
The monodromy computations for some of the considered groups took multiple days. Furthermore, we could not compute the discriminant in several cases, and therefore some results remain uncertified. 
Details are given below. 

We start with some preliminaries and present our computational results in \secref{sec:quartic_results}. 
For more background, we point to \cite{Huybrechts_2016} (Sections 14 and 15) and \cite{Dolgachev_1983}.

\subsubsection{Monodromy of quartic surfaces}
The homology lattice of a smooth quartic surface has rank 22 and its intersection product is the unique even unimodular lattice of signature $(3,19)$, that is, $U^3\oplus E_8(-1)^2$, where
\begin{equation}
\begin{gathered}
U = \left(\begin{array}{rr}
 & 1 \\
 1 &   \\
\end{array}\right)\qquad
E_8(-1) = \scriptsize \left(\begin{array}{rrrrrrrr}
 -2& 1 &  &  &  &  &  &  \\
  1&-2 &1  &  &  &  &  &  \\
  &1 &-2  & 1 &  &  &  &  \\
  & & 1 &-2  &1  &  &  &  \\
  & &  & 1 &-2  &1  &1  &  \\
  & &  &  & 1 &-2  &  &  \\
  & &  &  & 1 &  &-2  & 1 \\
  & &  &  &  &  & 1 &-2 
\end{array}\right) \normalsize \,.
\end{gathered}
\end{equation}
Smooth quartic surfaces in $\mathbb{P}^3$ are of K3 type: they admit a unique (up to scaling) non-vanishing holomorphic 2-form.
If $f$ is the defining equation of such a surface, then the holomorphic form is given by the residue of $\Omega_3/f$, with $\Omega_3$ as in \eqref{eq:Omega}.
The existence of such a form implies that, unlike cubic surfaces, smooth quartic surfaces are not rational.
In particular, their homology lattice $H_2(X)$ splits into two parts over $\Q$: $H_2(X)_{\Q} = \Pic(X)_{\Q} \oplus \Tr(X)_{\Q}$. The \emph{Picard lattice} $\Pic(X)$ (or \emph{Picard group} or \emph{N\'eron-Severi lattice}) is generated by homology classes of algebraic curves. Its orthogonal complement, the \emph{transcendental lattice}, is $\Tr(X)$.

When considering a family $\{X_z\}_{z\in \bbP^n}$ of generically smooth quartic surfaces instead of a single one, the \emph{generic Picard lattice}, which we will denote by $\Pic(X_z)$, is the set of homology classes $\gamma_z\in H_2(X_z)$ such that for all $z_0 \in \mathbb{P}^n \setminus \nabla$ we have $\gamma_{z_0} \in \Pic(X_{z_0})$. 

The generic Picard lattice defines a subspace of $H_2(X_z)$ which is stable under monodromy.
Similarly, the \emph{generic transcendental lattice} is the orthogonal complement of the generic Picard lattice. 
Since monodromy preserves the intersection product, $\Tr(X_z)$ is also stable under monodromy. 
Hence, the monodromy representation splits up into two parts
\begin{equation} \label{eq:splitmonodromy}
\pi_1(\bbP^n\setminus\nabla, b) \, \to \,   \operatorname{Aut}(\Pic(X_z)) \oplus \operatorname{Aut}(\Tr(X_z)) \, \subset \,  \operatorname{Aut}(H_2(X_b))\,.
\end{equation}

With the methods presented in the previous sections of this text, we are able to compute this monodoromy representation by providing a generic line $L\subset \bbP^n$ containing $b$, a basis of $\pi_1(L \setminus \nabla, b)$ and the $22\times 22$ matrices of the action of monodromy on $H_2(X_b)$ (in a given basis of homology).
The monodromy group is not necessarily finite. 
However, there are only finitely many smooth rational curves of a given degree lying on a smooth quartic surface.
In the next section, we will give a method for identifying the generic Picard lattice $\Pic(X_z)$ in terms of our description of $H_2(X_z)$ for families of surfaces with certain symmetry groups and use this to identify the homology classes of these curves. This way we can compute the shadow of the action of monodromy that is given by their permutation.

\subsubsection{Identifying the Picard lattice and rational curves}\label{sec:identify_pic}

Similar to the case of lines on cubics surfaces (see \secref{sec:lines}), the following proposition allows to identify homology classes of rational curves in $H_2(X)$.
\begin{proposition}\label{prop:identifying_curves}
Smooth algebraic curves of degree $d$ on a quartic K3 surface $X$ are isolated in their homology class $\gamma$, and are identified by the fact that $\gamma\in \Pic(X)$, $\gamma^2 = -2$, $\gamma\cdot H=d$, and $\gamma\cdot \eta>0$ for all classes $\eta$ of smooth curves of degree strictly less than $d$.
\end{proposition}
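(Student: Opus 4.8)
The plan for \propref{prop:identifying_curves} is to reduce everything to the standard toolkit for a K3 surface $X$: the canonical bundle is trivial, so adjunction reads $\gamma^{2}=2p_a(\gamma)-2$ for an effective class $\gamma$; $H$ is very ample, so $\gamma\cdot H>0$ for every nonzero effective class; Riemann--Roch gives $\chi(\mathcal{O}_X(D))=2+\tfrac12 D^{2}$; $H^{1}(\mathcal{O}_X)=0$, so $\Pic(X)=\operatorname{NS}(X)$ is torsion free and injects into $H_{2}(X,\Z)$, whence numerical, algebraic and linear equivalence coincide; and Serre duality gives $h^{2}(\mathcal{O}_X(D))=h^{0}(\mathcal{O}_X(-D))$ since $K_X$ is trivial. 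All of this is classical, see for instance \cite{Huybrechts_2016}.

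First I would prove \emph{isolatedness}. If $C,C'$ are smooth rational curves with the same class in $H_2(X)$, then, since $\Pic(X)$ injects into homology, $C\sim C'$, hence $C'\in|C|$; were $C'\neq C$, we would get $C\cdot C'=C^{2}=-2<0$, impossible for two distinct irreducible curves. So $C=C'$: each smooth rational curve is the unique effective divisor in its class, and distinct such curves have distinct homology classes. This is the K3 analogue of the fact, recalled in \secref{sec:lines}, that a line on a cubic surface is isolated in its homology class.

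Next I would establish the numerical characterisation. The forward direction is essentially immediate: a smooth rational curve $C$ of degree $d$ has $[C]\in\Pic(X)$, $[C]^{2}=2p_a(C)-2=-2$, $[C]\cdot H=d$, and $[C]\cdot\eta\ge0$ for the class $\eta$ of any distinct irreducible curve. For the converse, suppose $\gamma\in\Pic(X)$ with $\gamma^{2}=-2$, $\gamma\cdot H=d$ and $\gamma\cdot\eta>0$ for all classes $\eta$ of smooth curves of degree $<d$. Riemann--Roch gives $\chi(\mathcal{O}_X(\gamma))=1$, and $h^{2}(\mathcal{O}_X(\gamma))=h^{0}(\mathcal{O}_X(-\gamma))=0$ because $(-\gamma)\cdot H<0$ whereas effective classes pair non-negatively with $H$; hence $h^{0}(\mathcal{O}_X(\gamma))\ge1$ and $\gamma$ is effective, say represented by an effective divisor $D=\sum_i a_i C_i$ with the $C_i$ distinct irreducible curves and $a_i\ge1$. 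Pairing with $\gamma$, $\sum_i a_i(\gamma\cdot[C_i])=\gamma^{2}=-2<0$, so $\gamma\cdot[C_{i_0}]<0$ for some $i_0$. As $C_{i_0}$ does not occur in the effective divisor $\gamma-a_{i_0}[C_{i_0}]$, the remaining terms of $\gamma\cdot[C_{i_0}]$ are non-negative, so $a_{i_0}C_{i_0}^{2}\le\gamma\cdot[C_{i_0}]<0$; thus $C_{i_0}^{2}<0$, and by adjunction $C_{i_0}^{2}=2p_a(C_{i_0})-2\ge-2$ forces $C_{i_0}^{2}=-2$ and $C_{i_0}$ smooth rational. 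If $D$ had a second component, or $a_{i_0}\ge2$, then $H\cdot C_{i_0}<H\cdot D=d$, so $\eta=[C_{i_0}]$ would contradict the hypothesis $\gamma\cdot\eta>0$; therefore $D=C_{i_0}$, i.e.\ $\gamma$ is the class of a smooth rational curve of degree $d$, unique by isolatedness.

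The step I expect to require the most care is the strict positivity $[C]\cdot\eta>0$ in the forward direction: on an arbitrary K3 surface this can fail, since two disjoint $(-2)$-curves of different degrees would violate it. One should therefore either read the forward direction with $\ge0$ (keeping the strict inequality only as the selection criterion used in the converse), or verify $>0$ directly for the invariant families under consideration --- which is in any case part of the certified computation. The remaining ingredients, effectivity from Riemann--Roch together with ampleness and the bookkeeping with the irreducible components of $D$, are routine once the K3 toolkit is in place.
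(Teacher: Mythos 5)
Your argument is correct, and it is worth noting that the paper itself contains no proof of \propref{prop:identifying_curves}: it simply refers to \cite[\S3]{LairezSertoz_2019}. Your write-up is essentially the standard argument given there, namely: isolatedness because a $(-2)$-class admits at most one effective representative (two distinct irreducible curves would meet non-negatively, contradicting $C\cdot C'=C^2=-2$); effectivity of a class $\gamma$ with $\gamma^2=-2$, $\gamma\cdot H=d>0$ from Riemann--Roch plus Serre duality; then $\gamma\cdot D=-2<0$ forces a component $C_{i_0}$ with $\gamma\cdot C_{i_0}<0$, adjunction makes $C_{i_0}$ a smooth rational curve, and degree bookkeeping with the ample class $H$ forces $D=C_{i_0}$ of degree exactly $d$. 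Your implicit reading that ``smooth algebraic curves'' means smooth \emph{rational} curves is also the intended one (it is forced by $\gamma^2=-2$, and isolatedness genuinely fails for higher-genus smooth curves, e.g.\ elliptic curves moving in a pencil).

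Your caveat about the strict inequality is well taken and is in fact an imprecision of the statement as printed rather than a gap in your proof. The ``only if'' direction with $\gamma\cdot\eta>0$ can fail on some smooth quartics: for instance, the even lattice of signature $(1,2)$ with Gram matrix $\left(\begin{smallmatrix}4&1&2\\1&-2&0\\2&0&-2\end{smallmatrix}\right)$ in a basis $H,L,Q$ is realised as the Picard lattice of a smooth quartic (surjectivity of the period map plus Saint-Donat's criteria: no $(-2)$-class orthogonal to $H$ and no class $E$ with $E^2=0$, $E\cdot H\in\{1,2\}$), and there $L$ and $Q$ are necessarily the classes of a line and a smooth conic with $Q\cdot L=0$. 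So for the equivalence to hold on an arbitrary quartic the condition must be read as $\gamma\cdot\eta\ge 0$; the direction actually used in the computations --- every class satisfying the conditions is the class of a unique smooth rational curve of degree $d$ --- is exactly what you prove, and your proof works verbatim with either inequality.
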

A proof of this statement as well as an efficient algorithm for computing such classes assuming the knowledge of $\Pic(X)$ is given in \cite[\S3]{LairezSertoz_2019}.

We now explain how we compute the generic Picard lattices of our families. 
In short, the approach is as follows. 
We obtain a lower bound on the Picard lattice of $X$ from the group action, and we show that this bound is reached for a given surface $X_b$ in the family by showing that a determinant of periods is non-zero. 
This allows us to certifiably identify the generic Picard lattice of $X_t$ in our representation of $H_2(X_t)$.

The first ingredient is a proposition due to Nikulin \cite{Nikulin_1979}, used to find a lower bound on the Picard lattice.
Let $X$ be a K3 surface and $\omega\in H^2(X)$ a non-zero holomorphic differential form.
An automorphism $\sigma$ of $X$ is \emph{symplectic} if $\sigma^*\omega = \omega$ and \emph{non-symplectic} otherwise.
\begin{proposition} \label{prop:symplectic}
Let $H_2(X)^{\sigma_*}$ be the sublattice of $H_2(X)$ invariant under $\sigma_*$.
If $\sigma$ is symplectic, then
$\Tr(X) = \Pic(X)^\perp \subset H_2(X)^{\sigma_*}$.
If $\sigma$ is non-symplectic, then
${H_2(X)^{\sigma_*}\subset \Pic(X)}$.
\end{proposition}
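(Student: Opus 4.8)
The plan is to pass from homology to cohomology via Poincaré duality and then argue Hodge-theoretically. Write $H^2 = H^2(X;\Z)$, with its unimodular, hence nondegenerate, intersection form. Poincaré duality identifies $H_2(X)$ with $H^2$ compatibly with the pairing; since an automorphism of a K3 surface is a holomorphic, hence orientation-preserving, diffeomorphism, $\sigma_*$ preserves the intersection form, so under this identification $\sigma^*$ on $H^2$ corresponds to $\sigma_*^{-1}$ on $H_2(X)$ and therefore $(H^2)^{\sigma^*}$ corresponds to $H_2(X)^{\sigma_*}$. Likewise $\Pic(X) = H^{1,1}(X)\cap H^2$ and $\Tr(X) = \Pic(X)^{\perp}$ correspond to the Picard and transcendental lattices of the statement, so it suffices to prove the two inclusions on the cohomology side.

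Next I would record the Hodge-theoretic dictionary. Since cup product pairs $H^{2,0}(X)$ perfectly with $H^{0,2}(X)$ and vanishes identically on every other pair of Hodge summands, a rational class $\alpha$ lies in $H^{1,1}(X)$ if and only if $\int_X\alpha\wedge\omega = 0$ and $\int_X\alpha\wedge\bar\omega = 0$; combined with the Lefschetz $(1,1)$-theorem this gives $\Pic(X)_{\Q} = \{\omega,\bar\omega\}^{\perp}$, and hence, by nondegeneracy of the form, $\Tr(X)_{\Q} = \Pic(X)_{\Q}^{\perp}$ is the smallest $\Q$-subspace of $H^2\otimes\Q$ whose complexification contains $\omega$ and $\bar\omega$. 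Finally, since $\sigma$ is holomorphic it preserves the Hodge decomposition, and $H^{2,0}(X)$ is one-dimensional, so $\sigma^*\omega = \lambda\omega$ for some $\lambda\in\C^{\times}$, and applying complex conjugation $\sigma^*\bar\omega = \bar\lambda\bar\omega$.

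For the symplectic case, $\lambda = 1$, so $\omega$ and $\bar\omega$ both lie in the complexification of the $\Q$-subspace $(H^2\otimes\Q)^{\sigma^*} = \ker(\sigma^* - \mathrm{id})$; by the minimality property of $\Tr(X)_{\Q}$ just established, $\Tr(X)_{\Q}\subseteq (H^2\otimes\Q)^{\sigma^*}$. Intersecting with $H^2$, and using that $\Tr(X)$ is primitive (it is an orthogonal complement) and that $(H^2)^{\sigma^*}$ is primitive ($H^2$ being torsion-free), I conclude $\Tr(X)\subseteq (H^2)^{\sigma^*}$, which is the desired $\Tr(X)\subseteq H_2(X)^{\sigma_*}$; the identity $\Tr(X) = \Pic(X)^{\perp}$ is just the definition. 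For the non-symplectic case, $\lambda\neq 1$. Given $\alpha\in H^2$ with $\sigma^*\alpha = \alpha$, orientation-invariance of $\int_X$ under $\sigma$ yields $\int_X\alpha\wedge\omega = \int_X\sigma^*(\alpha\wedge\omega) = \int_X\sigma^*\alpha\wedge\sigma^*\omega = \lambda\int_X\alpha\wedge\omega$, so $(1-\lambda)\int_X\alpha\wedge\omega = 0$ and thus $\int_X\alpha\wedge\omega = 0$; the same computation with $\bar\omega$, using $\bar\lambda\neq 1$, gives $\int_X\alpha\wedge\bar\omega = 0$. By the dictionary above, $\alpha\in H^{1,1}(X)\cap H^2 = \Pic(X)$, so $(H^2)^{\sigma^*}\subseteq\Pic(X)$, i.e. $H_2(X)^{\sigma_*}\subseteq\Pic(X)$.

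Everything here is short and essentially formal; the only steps that deserve a careful word are the Hodge-theoretic dictionary of the second paragraph --- in particular perfectness of the $H^{2,0}\times H^{0,2}$ pairing together with the Lefschetz $(1,1)$-theorem identifying $\Pic(X)$ with $H^{1,1}(X)\cap H^2$ --- and the Poincaré-duality bookkeeping relating $\sigma^*$ on $H^2$ to $\sigma_*$ on $H_2$ while respecting primitivity. I do not anticipate a genuine obstacle; note that finiteness of $\langle\sigma\rangle$ is never used, though in our applications $\sigma$ always has finite order. This is, of course, the classical statement of Nikulin \cite{Nikulin_1979}.
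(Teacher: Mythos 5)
Your proof is correct, and it is essentially the standard Hodge-theoretic argument: the paper itself gives no proof of this proposition, citing Nikulin \cite{Nikulin_1979}, and your argument (eigenvalue of $\sigma^*$ on the one-dimensional $H^{2,0}$, the characterisation $\Pic(X)_\Q=\{\omega,\bar\omega\}^\perp$ via Lefschetz $(1,1)$, and the double-orthogonal-complement minimality of $\Tr(X)_\Q$) is precisely the classical route. One harmless slip: the cup product does not vanish ``on every other pair of Hodge summands'' --- it is nonzero on $H^{1,1}\times H^{1,1}$ --- but your deduction only uses that $\alpha\wedge\omega$ sees the $H^{0,2}$-component of $\alpha$, so nothing is affected; likewise, primitivity of $\Tr(X)$ is not even needed in the symplectic case, since an integral class fixed by $\sigma^*$ over $\Q$ is already fixed over $\Z$.
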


In this proposition, $\sigma_*: H_2(X) \rightarrow H_2(X)$ is induced by $\sigma: X \rightarrow X$. 
Given a linear automorphism $g \in \operatorname{GL_4}(\C)$ with $g^*f = f$, we see from \eqref{eq:group_action} and the fact that the holomorphic form $\omega$ is $\res\left({\Omega_3}/{f}\right)$ that the automorphism induced by $g$ is symplectic if and only if $\det(g)=1$.
For each $g$ we obtain a sublattice of $\Pic(X)$: if $g$ is non-symplectic, then ${H_2(X)^{g_*}\subset \Pic(X)}$ and if $g$ is symplectic, then $({H_2(X)^{g_*})^\perp \subset \Pic(X)}$. 
The sum of these lattices for all $g \in G$ gives a lower bound $\Lambda \subset \Pic(X)$, and hence an upper bound $\Tr(X) \subset \Lambda^\perp$. 
The $g$-invariant sublattice $H_2(X)^{g_*}$ is obtained from the kernel of the matrix ${\rm Mat}_{g_*} -I_{22}$, which in turn is computed using the method of \secref{sec:group_action} from an approximate period matrix via \eqref{eq:Matg}. 
The period matrix $\Pi$ is obtained using the methods from \cite{LairezEtAl_2024}. 

An upper bound for $\Pic(X)$ is obtained via a theorem attributed to Lefschetz \cite[p.~163]{GriffithsHarris_1978}.
\begin{theorem}[Lefschetz' theorem on $(1,1)$-classes for K3 surfaces]
A class $\gamma\in H_2(X)$ belongs to $\Pic(X)$ if and only if $\int_\gamma \omega = 0$ for any nonzero holomorphic form $\omega$ on $X$.
\end{theorem}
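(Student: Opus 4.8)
The plan is to reduce the statement to the classical Lefschetz theorem on $(1,1)$-classes by passing to cohomology via Poincar\'e duality. Under the inclusion $\Pic(X)\hookrightarrow H_2(X)$ used in the text --- which sends the class of a curve $C$ to its homology class, equivalently, via Poincar\'e duality, to $c_1(\mathcal O_X(C))\in H^2(X,\Z)$ --- the claim becomes: an integral class $\alpha\in H^2(X,\Z)$ lies in the image of $c_1\colon\Pic(X)\to H^2(X,\Z)$ if and only if its cup-product (= intersection) pairing with $\omega$ vanishes, where I use $\langle\mathrm{PD}(\gamma),\omega\rangle=\int_\gamma\omega$, and that for a K3 surface $H^{2,0}(X)$ is one-dimensional, spanned by a nonzero holomorphic form $\omega$, with $H^{0,2}(X)=\overline{H^{2,0}(X)}$ spanned by $\bar\omega$.

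First I would handle the Hodge-theoretic half: for a real class $\alpha\in H^2(X,\mathbb R)$, the condition $\langle\alpha,\omega\rangle=0$ is equivalent to $\alpha\in H^{1,1}(X)$. Under the cup product $H^2\times H^2\to H^4\simeq\C$ the pieces $H^{p,q}$ and $H^{p',q'}$ pair to zero unless $(p',q')=(2-p,2-q)$, so pairing against a generator $\omega$ of $H^{2,0}(X)$ annihilates $H^{2,0}(X)\oplus H^{1,1}(X)$ and restricts to a perfect pairing on $H^{0,2}(X)$. Hence $\langle\alpha,\omega\rangle=0$ says exactly that the $H^{0,2}$-component of $\alpha$ vanishes; since $\alpha$ is real its $H^{2,0}$- and $H^{0,2}$-components are complex conjugates, so this forces $\alpha$ to be of pure type $(1,1)$, i.e.\ $\alpha\in H^{1,1}(X)\cap H^2(X,\Z)$.

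Next I would invoke the Lefschetz theorem on $(1,1)$-classes to match $H^{1,1}(X)\cap H^2(X,\Z)$ with $\Pic(X)$. The long exact cohomology sequence of the exponential sheaf sequence $0\to\Z\to\mathcal O_X\to\mathcal O_X^*\to 0$ identifies the image of $c_1\colon\Pic(X)=H^1(X,\mathcal O_X^*)\to H^2(X,\Z)$ with the kernel of $H^2(X,\Z)\to H^2(X,\mathcal O_X)$; via the Dolbeault isomorphism $H^2(X,\mathcal O_X)\simeq H^{0,2}(X)$ and the previous paragraph, an integral class lies in this kernel precisely when it is of type $(1,1)$. For a K3 surface one moreover has $H^1(X,\mathcal O_X)=0$, so $\Pic^0(X)=0$ and $c_1$ is injective with image the N\'eron--Severi lattice; since $H^2(X,\Z)$ is torsion-free, this realises $\Pic(X)=\NS(X)$ as the primitive sublattice $H^{1,1}(X)\cap H^2(X,\Z)$ of $H_2(X)$, closing the chain of equivalences.

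The main obstacle is the Lefschetz $(1,1)$ theorem itself: the nontrivial content is that the purely Hodge-theoretic constraint ``$\alpha$ is of type $(1,1)$'' forces the integral class $\alpha$ to be algebraic, which rests on the exponential exact sequence together with the Dolbeault/Hodge identification $H^2(X,\mathcal O_X)\simeq H^{0,2}(X)$; everything else --- Poincar\'e duality, orthogonality of the Hodge pieces, and the K3-specific facts $h^{1,0}(X)=0$ and torsion-freeness of $H^2(X,\Z)$ --- is bookkeeping. As this is entirely classical, in practice I would simply cite \cite[p.~163]{GriffithsHarris_1978} for the $(1,1)$ theorem and the Hodge translation, supplemented by a standard reference for the structure of K3 surfaces such as \cite{Huybrechts_2016}.
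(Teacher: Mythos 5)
Your argument is correct and is exactly the classical reasoning behind the result: the paper itself gives no proof, attributing the statement to Lefschetz and citing \cite[p.~163]{GriffithsHarris_1978}, which is the same route you take (Hodge-type orthogonality against $\omega$ plus the exponential-sequence Lefschetz $(1,1)$ theorem, with $h^{1,0}=0$ giving $\Pic(X)=\NS(X)$ for a K3). So your proposal matches the paper's treatment, just with the standard details written out.
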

Since we are able to compute periods numerically with certified error bounds, we can prove that a class $\gamma$ is \emph{not} in $\Pic(X)$ by showing that the corresponding period $\int_\gamma \omega$ is nonzero.
To compute an upper bound on the generic Picard lattice, we pick a generic line $L\subset {\cal L}_h$ in the parameter space and consider the one-parameter family $\{X_t\}_{t\in L}$.
We let $\omega_t = \res\left({\Omega_3}/{f_t}\right)$ be a  section of the bundle of holomorphic forms for our family $X_t = V(f_t)$. To verify that $\gamma_t \notin \Pic(X_t)$, it is sufficient to show that one of the derivatives $\partial^k_t\int_{\gamma_t}\omega_t = \int_{\gamma_t} \nabla_t^k \omega_t$ does not vanish.
In particular if $\gamma_{1,t}, \dots, \gamma_{r,t}\in H_2(X_t)$ are such that $\det ( \int_{\gamma_{i,t}}\nabla_t^k\omega_t)_{1\le k,i\le r}$ is nonzero, then the generic transcendental lattice has rank at least $r$.

Finally, we note that the transcendental lattice $\Tr(X_z)$ is \emph{primitive}, meaning that it is not properly contained in a sublattice of $H_2(X_z)$ of the same rank.
In particular, if ${\rm rank} \Tr(X_z) =   {\rm rank} \, \Lambda^\perp$, then $\Tr(X_z) =  \Lambda^\perp$. 
We summarise these steps in the following:
\begin{enumerate}
\itemsep0em
\item Compute a sublattice $\Lambda \subset \Pic(X_z)$ via Proposition \ref{prop:symplectic}. We know that $\Tr(X_z)\subset \Lambda^\perp$.
\item Compute a basis $\gamma_{1,t}, \dots, \gamma_{r,t}$ of $\Lambda^\perp$.
\item If $\det\left(\int_{\gamma_{i,t}}\nabla_t^k\res \left(\frac{\Omega_3}{f_t}\right)\right)_{1\le i,k\le r} \neq 0$ at some $t_0 \in L\subset \bbP^n$, then $\Pic(X_z) = \Lambda$.
\end{enumerate}
In all the cases we considered, this approach recovered the generic Picard lattice.

\subsubsection{Computational results}\label{sec:quartic_results}

We compute monodromy groups acting on conics contained in a quartic surface $X \subset \mathbb{P}^3$ whose defining equation is invariant under the action of some finite group $G$. For the $G$-invariant families considered here, these conics generate the generic Picard lattice. Hence, we compute the restricted monodromy action to ${\rm Pic}(X)$. 

As the lattice $\Pic(X)\cap H^\perp$ is positive definite and $H$ is invariant under monodromy, the restriction of the monodromy group to $\Pic(X)$ is finite.
However, the knowledge of the action of monodromy on $\Pic(X)$ is not sufficient to recover the action of monodromy on the full $H_2(X)$, unlike in the case of cubic surfaces.
The monodromy group on the Picard lattice is a subgroup of the orthogonal group of $\Pic(X)\cap H^\perp$, and we denote its index by ${\cal I}_G$.

\tabref{tab:quartic} contains the data pertaining to the families and Galois groups of the Picard lattices of surfaces defined by $G$-invariant quarternary quartics, where $G$ is a subgroup of $S_5$ or the Heisenberg group $\mathcal H$ (see below for the definition of $\mathcal H$). Here, the Galois group ${\rm Gal}(\calL^4_G)$ is the subgroup of ${\rm Aut}({\rm Pic}(X_z))$ obtained as the projection of the image of \eqref{eq:splitmonodromy}.
The table shows the dimension $\dim{\cal L}^4_G$ of ${\cal L}^4_G \simeq \mathbb{P}^{\dim{\cal L}^4_G}$, the generic Picard rank $\rho_G$ of the family, the isomorphism class of the Galois group ${\rm Gal}({\cal L}^4_G)$ and its order $|{\rm Gal}({\cal L}^4_G)|$, the generic number of conics $|c_G|$ contained in a generic surface in ${\cal L}^4_G$, the orbit structure of these conics (see below), the index ${\mathcal I}_G$ mentioned above, and the degrees of the components of $\nabla_G$.

Computing the discriminants for these families is more challenging than in the case of cubic surfaces. This is to be expected from the fact that the discriminant of a generic quartic surface has degree 108, compared to which the degree 32 for cubic surfaces is small. Our implementation of the methods from Section \ref{sec:discriminants} does not terminate within reasonable time for many rows in the table, namely, those with a question mark to the right. The reported numbers are obtained by restricting to a line, which we picked at random. The impact of this difficulty on the certifiable correctness of the table are summarized in the following theorem. 

\begin{theorem}\label{thm:S5_quartics}
In \tabref{tab:quartic}, the columns $\dim\mathcal L^4 _G$, $\rho_G$ and $|c_G|$ are certified for all rows.
The rows where $\deg\nabla_G$ does not have a question mark are fully certified.
For those with a question mark, we give the discriminant of the restriction to a line which we picked at random.
If this line is generic in the sense of \thmref{thm:lefschetz_zariski}, then the row is certified.
If the line is not generic, then the Galois group given in this row is certified to be the one of the subspace given by this line.
In particular, the Galois group of the full family is necessarily larger.
For all families, the generic surface contains no lines (i.e., no rational curve of degree one). 
In cases with $c_G >0$, the conics generate the generic Picard lattice.
\end{theorem}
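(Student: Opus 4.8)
The plan is to assemble the theorem from the computational primitives developed in Sections \ref{sec:lines}--\ref{sec:discriminants}, specialised to quartics, together with the Picard-lattice identification of \secref{sec:identify_pic}. First I would treat the claims that are asserted unconditionally. That $\dim \mathcal L^4_G$ is certified is immediate: it equals $\dim \C[x_0,\dots,x_3]_4^G - 1$, which is an exact linear-algebra computation over $\Q$ (compute the invariant subspace as the image of the Reynolds operator, or as a kernel of a rational matrix). For $\rho_G$ and $|c_G|$, the key point is that the three-step procedure at the end of \secref{sec:identify_pic} is \emph{certified}: step 1 produces a sublattice $\Lambda \subset \Pic(X_z)$ from \propref{prop:symplectic} using only the exact integer matrices $\operatorname{Mat}_{g_*}$ (recovered exactly via \eqref{eq:Matg} since they are integral and we have certified precision $<\tfrac12$); step 3 shows $\Lambda^\perp = \Tr(X_z)$ by exhibiting a period determinant $\det\big(\int_{\gamma_{i,t}}\nabla_t^k\omega_t\big)_{i,k}$ whose nonvanishing at a rational $t_0$ is certified by rigorous (ball-arithmetic) lower bounds on its modulus. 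Hence $\Pic(X_z) = \Lambda$ exactly, giving $\rho_G = \operatorname{rank}\Lambda$, and then \propref{prop:identifying_curves} — whose hypotheses ($\gamma \in \Pic(X)$, $\gamma^2=-2$, $\gamma\cdot H = d$, positivity against lower-degree classes) are all exact conditions on the now-known lattice $\Pic(X_z)$ — determines the finite set of conic classes, so $|c_G|$ is an exact count. This part is just ``apply the earlier certified algorithms.''

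Next I would handle the conditional clause about $\deg\nabla_G$. The logical skeleton is exactly the discussion following \thmref{thm:lefschetz_zariski}: the monodromy generators are computed by restricting to a random line $L \subset \mathcal L^4_G$ through $b$ and tracking $\pi_1(L\setminus\nabla_G,b)$. By \thmref{thm:lefschetz_zariski}, if $L$ is generic (misses $\operatorname{Sing}\nabla_G$ and is not tangent), then $\psi\colon \pi_1(L\setminus\nabla_G)\to\pi_1(\mathcal L^4_G\setminus\nabla_G)$ is surjective, so $\langle\sigma_{\ell_1},\dots,\sigma_{\ell_s}\rangle$ is the full Galois group. When $\deg\nabla_G$ has been computed symbolically (no question mark), one checks $|L\cap\nabla_G| = \deg\nabla_G$ to certify genericity, so the row is fully certified. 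When $\deg\nabla_G$ is unknown, $L$ is still a legitimate line, so what we compute is unconditionally the Galois group of the \emph{sub}family parametrised by $L$; since $\psi$ is always a group homomorphism, its image $\langle\sigma_{\ell_i}\rangle$ is always a subgroup of $\Gal(\mathcal L^4_G)$, giving the stated ``necessarily larger'' conclusion. I would phrase this as: either $L$ is generic, in which case the two groups coincide; or it is not, in which case we only have a subgroup — and this dichotomy is precisely the claim.

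For the final sentences — ``no lines'' and ``conics generate the generic Picard lattice'' in the cases $c_G>0$ — the argument is again lattice-theoretic and exact. Absence of lines means there is no class $\gamma$ with $\gamma\in\Pic(X_z)$, $\gamma^2=-2$, $\gamma\cdot H = 1$; since $\Pic(X_z)=\Lambda$ is known exactly, this is a finite check (equivalently, an enumeration as in \cite{LairezSertoz_2019} at degree one returning the empty set). That the conics generate $\Pic(X_z)$ means the conic classes span $\Lambda\otimes\Q$ (or $\Lambda$ itself), which one verifies by computing the rank of the integer matrix whose rows are the conic classes in our homology basis and comparing with $\rho_G$. The main obstacle is none of these individual steps but rather the \emph{scope of applicability}: everything hinges on step 3 of \secref{sec:identify_pic} actually succeeding — i.e.\ on the computed period-derivative determinant being provably nonzero at a sampled point with the precision available — and on the numerical analytic continuation of the Picard--Fuchs systems converging to certified precision $<\tfrac12$ for these rank-$22$ lattices. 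For the families in the table this has been verified (these are the reported computations), so the theorem follows; the honest statement is that the proof is the successful, certified execution of the pipeline, with the question-marked rows weakened exactly as the theorem says because the symbolic discriminant computation did not terminate.
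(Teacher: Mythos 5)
Your proposal is correct and follows essentially the same route as the paper: the paper's proof is precisely the certified execution of this pipeline (identify $\Pic(X)$ via \secref{sec:identify_pic}, compute monodromy along a random line via \secref{sec:monodromy} with the \thmref{thm:lefschetz_zariski} genericity dichotomy, and recover the degree $\le 2$ curve classes via \propref{prop:identifying_curves}), which you have merely spelled out in more detail. The only minor difference is that the paper disposes of the $\Z_1$ row separately, noting its generic Picard lattice is just $\langle H\rangle$, rather than running the group-action lower-bound step there.
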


\begin{table}[h]
\footnotesize
\centering
\makebox[\linewidth][c]{
\setlength{\tabcolsep}{3pt}
\begin{tabular}{cccccccccc}
\toprule
$G$ & $\dim {\cal L}^4_G$ & $\rho_G$ & ${\rm Gal}({\cal L}^4_G)$ &$|{\rm Gal}({\cal L}^4_G)|$ & $|c_G|$ & Orbit structure & ${\mathcal I}_G$ & \multicolumn{2}{c}{$\deg \nabla_G$ }\\ \midrule
  $\mathcal H$  &  4    &  16      &  $\Z_2^{10}$&  1024  &    320   &  $32^{10}$ & 40320 &  $1^{15},3^1$       \\
  $S_5$ and $A_5$ & 1 & 19       &   $\Z_2\times D_6$     & 24&        560 & $2^{10},4^{15},12^{40}$   & 360&  $1^5$  \\
  $F_5$ & 2 & 18 &  $\Z_2\times S_5$ & 240 & 400 & $40^{10}$ &20 & $1^3,2^3$ \\
  $S_4$ and $A_4$  & 4 & 17  &$\operatorname{GL}_2(\Z_4)\rtimes\Z_2^2$ & 384 & 344 & $2^{4},8^{3},12^{6},32^{6},48^{1}$ & 288 & $1^4,2^1,5^2$ \\
  $D_5$ & 4 & 17 & -- & 14400& 0  & $0^{1}$ &20& \textcolor{gray}{$2^1,6^2$ }&\textcolor{gray}{(?)} \\
  $D_6$ & 6 & 16 &  $D_6\cdot W(F_4)$ & 13824 & 200 & $2^{1},6^{3},36^{1},48^{3}$ & 6 & $1^4,2^2,5^1,8^1$ \\
  $\Z_5$ & 6 & 17  & -- & $72000$ & 0 & $0^1$ & $4$ &  \textcolor{gray}{$4^1,12^1$ }&\textcolor{gray}{(?)} \\
  $D_4$ & 7 & 15 &  -- & 24576 & 200 & $8^{1},16^{2},32^{2},96^{1}$ & 16 &  \textcolor{gray}{$1^5,2^3,5^1,10^1$ }&\textcolor{gray}{(?)} \\
  $\tS$ & 7 & 15  & -- & 622080 & 0 & $0^{1}$ &6&  \textcolor{gray}{$1^2,2^1,5^2,12^1$ }&\textcolor{gray}{(?)} \\
  $\Z_6$ & 7 & 16 &  -- & 41472 & 200 & $2^{1},18^{1},36^{1},144^{1}$ & 2 &  \textcolor{gray}{$1^3,2^3,5^1,8^1$ }&\textcolor{gray}{(?)} \\
  $\Z_4$ & 9 & 14  & -- & $245760$ & 160 & $160^1$ & $2$ &  \textcolor{gray}{$1^2,2^3,24^1$ }&\textcolor{gray}{(?)} \\
  $S_3$ & 10 & 14  & $\Z_2\times W(E_6)$  & 103680 & 164 & $2^{1},54^{3}$ & 6 &  \textcolor{gray}{$1^1,6^2,21^1$ }&\textcolor{gray}{(?)} \\
  $K_4$ & 10 & 13  & -- & 393216 & 128 & $128^{1}$ & 648 &   \textcolor{gray}{$1^4,2^6,20^1$ }&\textcolor{gray}{(?)}  \\
  $\Z_3$ & 12 & 13  & -- & $39191040$ & 0 & $0^1$ & $2$ &  \textcolor{gray}{$6^1,34^1$ }&\textcolor{gray}{(?)} \\
  $\nK$ & 13 & 12 &  -- & 1769472 & 104 &$8^{1},48^{2}$ & 2 &  \textcolor{gray}{$1^2,2^1,6^1,10^2,14^1$ }&\textcolor{gray}{(?)} \\
  $\DT$ & 18 & 9  & $W(E_8)$ & $696729600$ & 0 & $0^1$ & 1 &  \textcolor{gray}{$6^2,48^1$ }&\textcolor{gray}{(?)} \\
  $\Z_2$ & 21 & 8 &  $W(E_7)$ & 2903040 & 56 &$56^{1}$ & 1 &  \textcolor{gray}{$1^1,27^1,40^1$ }&\textcolor{gray}{(?)} \\
  $\Z_1$ & 34 & 1 &  $\Z_1$ & 1 & 0 &$0^{1}$ & 1 & $108^1$ \\
  \bottomrule
\end{tabular}
}
\caption{Galois groups, conics and discriminant degrees of $G$-invariant quartic surfaces. The cardinal of the full automorphism group of $\Pic(X)\cap H^\perp$ is always the product of the numbers in $|{\rm Gal}({\cal L}^4_G)|$ and $\mathcal I_G$, even in the case of inequalities for $\Z_5$.}
\label{tab:quartic}
\end{table}

In the column labeled ``Orbit structure'', the string $2^{10},4^{15},12^{40}$ means that the monodromy action on the conics has ten orbits of size two, fifteen of size four and forty of size twelve.
The strings in the column $\deg\nabla_G$ have the same meaning as in \tabref{tab:main}.
The symbol ``$\rtimes$'' in ``$\operatorname{GL}_2(\Z_4)\rtimes\Z_2^2$'' signifies a semidirect product, with normal subgroup $\operatorname{GL}_2(\Z_4)$ (group id \href{https://www.lmfdb.org/Groups/Abstract/384.20051}{384.20051}). 
The dot ``$\cdot$'' in ``$D_6\cdot W(F_4)$'' signifies an extension with normal subgroup $D_6$ and quotient $W(F_4)$ (group id \href{https://www.lmfdb.org/Groups/Abstract/13824.o}{13824.o}).
Groups for which the column ${\rm Gal}({\cal L}_G)$ is empty are those for which ${\rm Gal}({\cal L}_G)$ could not be identified in the LMFDB group database \cite{lmfdb}.
These groups are given explicitly as permutation groups or matrix groups in \cite{code}, where we also provide the monodromy representation on the full $H_2(X)$.

\begin{proof}[Proof of \thmref{thm:S5_quartics}]
For all cases in \tabref{tab:quartic} except $\Z_1$, we were able to identify $\Pic(X)$ in $H_2(X)$ using the results of \secref{sec:identify_pic} and compute the image of the monodromy action ${\pi_1(L\setminus \nabla_G)\to\Pic(X)}$ for a random line $L\subset \bbP^n$ using \secref{sec:monodromy}.
We recovered the homology classes of algebraic curves of degree $\le 2$ from the intersection product on $\Pic(X)$ using \propref{prop:identifying_curves}.
The case $\Z_1$ is trivial as the generic Picard lattice is generated by $H$.
\end{proof}

\begin{remark}
We note here a subtlety for the computation of the analytic continuation of the periods (see \secref{sec:numerical_integration}).
If one sets this up naively, the coefficients of the Picard--Fuchs equations become too large for the computation to be feasible, both because of time complexity and space complexity.
What enabled us to obtain the results in \tabref{tab:quartic} is that we found a generating set of algebraic forms $\omega_{1,t}, \omega_{2,t},\ldots\in H^2(X_t)$ for which the Picard--Fuchs equations are of small enough order to be integrated.
\end{remark}

\begin{remark}
The cases $D_5$, $\Z_5$, $\tS$ and $\Z_3$ do not have smooth conics nor smooth rational cubics, and we identify using \propref{prop:identifying_curves} respectively 41280, 41280, 11808 and 4032 smooth rational quartic curves, which generate the generic Picard lattice.
For $G = \DT$, the entries of the intersection product on the generic Picard lattice are even with multiples of 4 on the diagonal, which implies that there are no classes of self-intersection -2. Hence, by \propref{prop:identifying_curves}, a generic surface in $\calL^4_{\DT}$ contains no smooth rational curves.
\end{remark}

\begin{remark}
The case $\Z_2$ should be compared to the results of \cite{MedranoMartinDelCampo_2022a} on monodromy of a certain family of Del Pezzo surfaces with 56 lines.
There the author also considers quartic surfaces given as four-fold covers of $\bbP^2$ ramified along a smooth quartic curve, i.e., with defining equation of the form $x_0^4 - f_4=0$ with $f_4 \in \C[x_1,x_2,x_3]_4$.
Up to a change of coordinates, it is a subsystem of the case $\Z_2$.
The linear system of such surfaces has projective dimension 15, and a generic surface in this system has 56 conics in a single monodromy orbit, as well as a $\Z_4$ automorphism group induced by $w\mapsto iw$.
The discriminant has degree $1^1,27^1$ and the Galois group of the $56$ conics is also $W(E_7)$.
\end{remark}

Compared to the cubic surface case, as the number of conics depends on the specific family that is being considered, the monodromy groups cannot be directly compared to one another. 
However, if $G<G'$, then the lattice of classes of curves on a generic element of $\mathcal L^4_{G'}$ that are imposed by the automorphisms induced by $G$ is stable under the monodromy on $\mathcal L^4_{G'}$ as it is a restriction of the monodromy in $\mathcal L^4_{G}$.
For example, among the 560 conics of a generic element of $\mathcal L^4_{S_5}$, 400 come from the fact that $\mathcal L^4_{S_5}\subset \mathcal L^4_{F_5}$.
In particular, the monodromy group in $\mathcal L^4_{S_5}$ can be restricted to the 400 conics stemming from the $F_5$-invariance.
It turns out that the restriction is still isomorphic to $\Z_2\times D_6$, and the orbit structure is $4^{10},12^{30}$.\\

We now turn to \emph{Heisenberg-invariant} K3 surfaces as defined in \cite{Eklund_2018, Bouyer_2018}. 
The Heisenberg group $\mathcal H$ is the group generated by the linear automorphisms
\begin{equation}
(x_0, x_1, x_2, x_3)\mapsto (x_1, x_0, x_3, x_2), (x_2, x_3, x_0, x_1), (x_0, x_1, -x_2, -x_3), (x_0, -x_1, x_2, -x_3)
\end{equation}
Following the notation of \cite{Bouyer_2018}, we parametrise Heisenberg invariant quartic surfaces by
\begin{equation}
A (x_0^4+x_1^4+x_2^4+x_3^4) + Bx_0x_1x_2x_3 + C(x_0^2x_1^2 + x_2^2x_3^2) + D(x_0^2x_2^2 + x_1^2x_3^2) + E(x_0^2x_3^2 + x_1^2x_2^2).
\end{equation}
Here $[A:B:C:D:E]$ are coordinates on $\mathcal L_{\mathcal H}\simeq \bbP^4$.
We will take interest in the following subfamilies of $\mathcal L_{\mathcal H}$, introduced in \cite{Bouyer_2018}:
\begin{itemize}
\item the space of Heisenberg invariant K3 surfaces $\mathcal L_{\mathcal H}$, denoted $\mathcal X$ in \cite{Bouyer_2018};
\item the three parameter subfamily $[A: (DE-2AD)/A: C: D: E]$ in $ \mathcal L_{\mathcal H}$, denoted $\mathcal X_{C,D,E}$;
\item the two parameter subfamily $[A:0:C:D:2AC/D]$ in $ \mathcal L_{\mathcal H}$, denoted $\mathcal X_{C,D}$;
\item the one parameter subfamily $[A: B(2A-B)/A: B:B:B]$ in $ \mathcal L_{\mathcal H}$, denoted $\mathcal X_{B}$;
\item the one parameter subfamily $[A: 0:C:0:0]$ in $ \mathcal L_{\mathcal H}$, denoted $\mathcal X_{C}$.
\end{itemize}
Note that among these families, only ${\cal X}$ and ${\cal X}_C$ are linear systems. 
Nevertheless, our method can still be applied. 
Indeed, we are able to compute the defining equation of the discriminant variety of $\mathcal L_{\mathcal H}^4$ (which was in fact already computed in \cite[Proposition 2.1]{Eklund_2018}), and we may simply restrict it to the different subfamilies to recover their discriminant.
The monodromy computations do not rely on the linear system hypothesis. We generate the fundamental group by taking a generic line in the parameter space of each family.

\begin{theorem}
A generic surface in $\mathcal L^4_{\mathcal H}$ has Picard rank 16 and contains 320 conics.
The monodromy group on the conics is isomorphic to $\Z_2^{10}$.
When restricted to $\mathcal X_{C,D,E}$, $\mathcal X_{C,D}$, $\mathcal X_{C}$, and $\mathcal X_{B}$, the monodromy group on the 320 conics is reduced to $\Z_2^9$, $\Z_2^6$, $\Z_2^2$ and $\Z_2^3$ respectively.
\end{theorem}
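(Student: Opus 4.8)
The plan is to run the pipeline of Sections~\ref{sec:monodromy}--\ref{sec:discriminants} together with \secref{sec:identify_pic} on the Heisenberg family $\calL^4_{\mathcal H}\simeq\bbP^4$ and each of its subfamilies. First I would fix a base point $b$ corresponding to a smooth Heisenberg-invariant quartic $X_b$ and compute its period matrix $\Pi$ with certified precision using \texttt{lefschetz-family}. Splitting the four generators of $\mathcal H$ into symplectic and non-symplectic ones according to their determinant, \propref{prop:symplectic} produces a sublattice $\Lambda\subset\Pic(X_z)$; choosing a basis $\gamma_{1,t},\dots,\gamma_{r,t}$ of $\Lambda^\perp$ and checking that $\det\!\left(\int_{\gamma_{i,t}}\nabla_t^k\res(\Omega_3/f_t)\right)_{1\le i,k\le r}$ is nonzero at $b$ forces, by Lefschetz' theorem on $(1,1)$-classes and primitivity of $\Tr(X_z)$, the equalities $\Pic(X_z)=\Lambda$ and $\Tr(X_z)=\Lambda^\perp$; one reads off that $\Pic(X_z)$ has rank $16$. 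Feeding the intersection form on $\Pic(X_z)$ into \propref{prop:identifying_curves} then isolates the $320$ conic classes, namely those $\gamma$ with $\gamma^2=-2$, $\gamma\cdot H=2$, no rational curves of smaller degree being present.

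For the monodromy, I would use the known defining equation of $\nabla_{\mathcal H}$ from \cite[Proposition~2.1]{Eklund_2018}, which has degree~$18$: picking a line $L\ni b$ and verifying that $L\cap\nabla_{\mathcal H}$ consists of $18$ distinct points certifies genericity in the sense of \thmref{thm:lefschetz_zariski}, so that the Voronoi-graph loops around these points generate $\pi_1(\bbP^4\setminus\nabla_{\mathcal H},b)$. Analytically continuing $\Pi$ along these loops with \texttt{ore\_algebra} yields integer matrices on $H_2(X_b)$; restricting them to $\Pic(X_z)$ and recording the induced permutations of the $320$ conics gives generators of the Galois group, which one then checks is isomorphic to $\Z_2^{10}$. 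Finiteness of this group is automatic, since $\Pic(X_z)\cap H^\perp$ is positive definite and $H$ is monodromy-invariant; the only content is that no larger group appears, which rests on having a complete generating set, hence on \thmref{thm:lefschetz_zariski} and on the exact count $\#(L\cap\nabla_{\mathcal H})=\deg\nabla_{\mathcal H}$.

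For the subfamilies $\mathcal X_{C,D,E}$, $\mathcal X_{C,D}$, $\mathcal X_{C}$, $\mathcal X_{B}$, each is given by an explicit rational parametrisation of a lower-dimensional parameter space into $\bbP^4$, and a point of a subfamily parametrises a singular surface precisely when it lies over $\nabla_{\mathcal H}$; hence the discriminant of the subfamily is obtained by pulling back the equation of $\nabla_{\mathcal H}$ and factoring. Taking a generic line in the sub-parameter-space (certified as before) and rerunning the monodromy computation yields, in each case, a subgroup of $\Z_2^{10}$, since the inclusion of the subfamily with its discriminant removed into $\bbP^4\setminus\nabla_{\mathcal H}$ induces a homomorphism of fundamental groups compatible with the monodromy representation. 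The computation then identifies these subgroups as $\Z_2^9$, $\Z_2^6$, $\Z_2^2$ and $\Z_2^3$ respectively.

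The step I expect to be the main obstacle is the certified analytic continuation of the periods: with a naive cohomology basis the Picard--Fuchs operators have coefficients so large that integration is infeasible, so one must first find a generating set of algebraic $2$-forms whose associated operators have small enough order to be integrated with controlled precision (cf.\ the first remark following \thmref{thm:S5_quartics}). A secondary difficulty is ensuring genericity of the randomly chosen lines: for $\calL^4_{\mathcal H}$ this is handled by the explicit discriminant of \cite{Eklund_2018}, but for the subfamilies it requires the pulled-back discriminant to be computed and its degree matched against the intersection count, and ultimately the correctness of the whole scheme hinges on \thmref{thm:lefschetz_zariski} guaranteeing that the loops so produced really do generate the relevant fundamental group.
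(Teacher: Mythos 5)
Your proposal matches the paper's own route: certified periods of a Heisenberg-invariant base fibre, \propref{prop:symplectic} plus the nonvanishing period determinant to pin down the rank-16 generic Picard lattice, \propref{prop:identifying_curves} to isolate the 320 conic classes, generators of the fundamental group from a generic line certified against the degree-18 discriminant of \cite{Eklund_2018}, and restriction of that discriminant to the (non-linear) subfamilies exactly as the paper does. The only slip is cosmetic: all four Heisenberg generators have determinant one, so every induced automorphism is symplectic and the lower bound $\Lambda$ comes entirely from the orthogonal complements of invariant sublattices, which does not affect the argument.
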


In some of these families, some conics degenerate to pairs of lines, and certain transcendental classes become algebraic and produce more conics, see \cite{Bouyer_2018} for details.
We can detect this heuristically by recovering the Picard lattice from integer linear relations between the holomorphic periods \cite{LairezSertoz_2019}.
However, as we only get numerical approximations of the periods, we cannot certify the computation of the (generic) Picard lattice with this approach.
Nevertheless, with \emph{a priori} knowledge of the structure of the Picard lattice, we can sometimes identify it in our computation of $H_2(X)$ from the structure of the monodromy representation.

\appendix
\section{Monodromy action on symmetric surfaces - data}\label{appendix:permutations_symmetric}

\begin{table}[p] 
\scriptsize 
\centering 
\begin{tabular}{ccc}
\toprule
\thead{Group} & \thead{Generator} & \thead{Action on lines} \\ 
\midrule\\[-.5em]
\multirow{2}{*}{$S_5$}&$(1\,4\,5\,3\,2)$&$(2,5,3,4,6)(8,11,9,10,12)(13,16,14,25,15)(17,23,27,18,24)(19,26,21,20,22)$ \\
&$(1\,2)$&$(1,10)(2,12)(3,11)(4,7)(5,9)(6,8)(13,22)(14,23)(15,18)(16,20)(17,24)(26,27)$ \\[1em]
\multirow{2}{*}{$A_5$}&$(1\,4\,5\,3\,2)$&$(2,5,3,4,6)(8,11,9,10,12)(13,16,14,25,15)(17,23,27,18,24)(19,26,21,20,22)$ \\
&$(1\,2\,3)$&$(1,3,2)(4,6,5)(7,9,8)(10,12,11)(13,14,17)(15,21,18)(16,20,19)(22,24,23)(25,27,26)$ \\[1em]
\multirow{2}{*}{$F_5$}&$(1\,2\,3\,4\,5)$&$(1,6,2,3,4)(7,12,8,9,10)(13,27,18,14,22)(15,19,17,20,25)(16,24,26,21,23)$ \\
&$(2\,3\,5\,4)$&$(1,9,2,10)(3,8,4,7)(5,11)(6,12)(13,20)(14,17,18,25)(15,27,19,22)(16,21,26,23)$ \\[1em]
\multirow{3}{*}{$D_6$}&$(1\,2\,3)$&$(1,3,2)(4,6,5)(7,9,8)(10,12,11)(13,14,17)(15,21,18)(16,20,19)(22,24,23)(25,27,26)$ \\
&$(1\,2)$&$(1,10)(2,12)(3,11)(4,7)(5,9)(6,8)(13,22)(14,23)(15,18)(16,20)(17,24)(26,27)$ \\
&$(4\,5)$&$(1,11)(2,12)(3,10)(4,9)(5,7)(6,8)(13,24)(14,23)(15,26)(17,22)(18,27)(21,25)$ \\[1em]
\multirow{2}{*}{$D_5$}&$(1\,4\,5\,3\,2)$&$(2,5,3,4,6)(8,11,9,10,12)(13,16,14,25,15)(17,23,27,18,24)(19,26,21,20,22)$ \\
&$(1\,2)(3\,4)$&$(2,5)(3,6)(8,11)(9,12)(13,16)(14,15)(17,24)(18,23)(19,21)(20,22)$ \\[1em]
\multirow{2}{*}{$\tS$}&$(1\,2\,5)$&$(1,2,5)(3,6,4)(7,8,11)(9,12,10)(13,26,16)(14,19,23)(15,18,21)(17,24,25)(20,27,22)$ \\
&$(1\,2)(3\,4)$&$(2,5)(3,6)(8,11)(9,12)(13,16)(14,15)(17,24)(18,23)(19,21)(20,22)$ \\[1em]
\multirow{2}{*}{$\Z_6$}&$(1\,2\,3)$&$(1,3,2)(4,6,5)(7,9,8)(10,12,11)(13,14,17)(15,21,18)(16,20,19)(22,24,23)(25,27,26)$ \\
&$(4\,5)$&$(1,11)(2,12)(3,10)(4,9)(5,7)(6,8)(13,24)(14,23)(15,26)(17,22)(18,27)(21,25)$ \\[1em]
\multirow{1}{*}{$\Z_5$}&$(1\,4\,5\,3\,2)$&$(2,5,3,4,6)(8,11,9,10,12)(13,16,14,25,15)(17,23,27,18,24)(19,26,21,20,22)$ \\[.5em]
\midrule\\[-.5em]
\multirow{2}{*}{$S_4$}&$(1\,2\,3\,4)$&$(1,11,4,8)(2,7,5,10)(3,9)(6,12)(13,16,23,18)(14,21,20,17)(15,24,22,19)(25,26)$ \\
&$(1\,2)$&$(1,10)(2,12)(3,11)(4,7)(5,9)(6,8)(13,22)(14,23)(15,18)(16,20)(17,24)(26,27)$ \\[1em]
\multirow{2}{*}{$A_4$}&$(1\,2\,3)$&$(1,2,3)(4,5,6)(7,8,9)(10,11,12)(13,17,14)(15,18,21)(16,19,20)(22,23,24)(25,26,27)$ \\
&$(1\,2)(3\,4)$&$(2,5)(3,6)(8,11)(9,12)(13,16)(14,15)(17,24)(18,23)(19,21)(20,22)$ \\[1em]
\multirow{2}{*}{$D_4$}&$(1\,3\,2\,4)$&$(1,7)(2,9,5,12)(3,11,6,8)(4,10)(13,14,16,15)(17,21,24,19)(18,20,23,22)(26,27)$ \\
&$(1\,2)$&$(1,10)(2,12)(3,11)(4,7)(5,9)(6,8)(13,22)(14,23)(15,18)(16,20)(17,24)(26,27)$ \\[1em]
\multirow{2}{*}{$S_3$}&$(1\,2\,3)$&$(1,2,3)(4,5,6)(7,8,9)(10,11,12)(13,17,14)(15,18,21)(16,19,20)(22,23,24)(25,26,27)$ \\
&$(1\,2)$&$(1,10)(2,12)(3,11)(4,7)(5,9)(6,8)(13,22)(14,23)(15,18)(16,20)(17,24)(26,27)$ \\[1em]
\multirow{2}{*}{$K_4$}&$(1\,2)(3\,4)$&$(2,5)(3,6)(8,11)(9,12)(13,16)(14,15)(17,24)(18,23)(19,21)(20,22)$ \\
&$(1\,3)(2\,4)$&$(1,4)(2,5)(7,10)(8,11)(13,23)(14,20)(15,22)(16,18)(17,21)(19,24)$ \\[1em]
\multirow{2}{*}{$\nK$}&$(1\,2)$&$(1,10)(2,12)(3,11)(4,7)(5,9)(6,8)(13,22)(14,23)(15,18)(16,20)(17,24)(26,27)$ \\
&$(3\,4)$&$(1,10)(2,9)(3,8)(4,7)(5,12)(6,11)(13,20)(14,18)(15,23)(16,22)(19,21)(26,27)$ \\[1em]
\multirow{1}{*}{$\Z_4$}&$(1\,3\,2\,4)$&$(1,7)(2,9,5,12)(3,11,6,8)(4,10)(13,14,16,15)(17,21,24,19)(18,20,23,22)(26,27)$ \\[1em]
\multirow{1}{*}{$\Z_3$}&$(1\,2\,3)$&$(1,2,3)(4,5,6)(7,8,9)(10,11,12)(13,17,14)(15,18,21)(16,19,20)(22,23,24)(25,26,27)$ \\[1em]
\multirow{1}{*}{$\Z_2$}&$(1\,2)$&$(1,10)(2,12)(3,11)(4,7)(5,9)(6,8)(13,22)(14,23)(15,18)(16,20)(17,24)(26,27)$ \\[1em]
\multirow{1}{*}{$\DT$}&$(1\,2)(3\,4)$&$(2,5)(3,6)(8,11)(9,12)(13,16)(14,15)(17,24)(18,23)(19,21)(20,22)$ \\[1em]
\multirow{1}{*}{$\Z_1$}&$\operatorname{id}$&$\operatorname{id}$ \\[.5em]

\bottomrule
\end{tabular}
\caption{The action of the subgroups $G$ of $S_5$ on the 27 lines of the cubic surface. 
The realisation of the groups are those given in \tabref{tab:subgroups_S5} and the lines are numbered as in \secref{sec:lines}.}\label{tab:actions}
\end{table}

\begin{table}[p] 
\scriptsize 
\centering 
\begin{tabular}{cc}
\toprule
\thead{$G$} & \thead{Generators of $\Gal(\mathcal L_{G})$} \\ 
\midrule\\[-.5em]
\multirow{2}{*}{$D_6$}&$(4,24)(5,22)(6,23)(7,17)(8,14)(9,13)$ \\
&$(1,5)(2,6)(3,4)(7,11)(8,12)(9,10)(13,24)(14,23)(15,26)(17,22)(18,27)(21,25)$ \\[1em]
\multirow{1}{*}{$D_5$}&$\operatorname{id}$ \\[1em]
\multirow{2}{*}{$\tS$}&$(3,22)(4,27)(6,20)(7,26)(8,16)(11,13)$ \\
&$(1,4)(2,3)(5,6)(7,10)(8,9)(11,12)(13,20)(14,18)(15,23)(16,22)(19,21)(26,27)$ \\[1em]
\multirow{2}{*}{$\Z_6$}&$(1,2,3)(4,22,6,24,5,23)(7,14,9,17,8,13)(10,11,12)(15,18,21)(16,19,20)(25,26,27)$ \\
&$(1,5)(2,6)(3,4)(7,11)(8,12)(9,10)(13,24)(14,23)(15,26)(17,22)(18,27)(21,25)$ \\[1em]
\multirow{1}{*}{$\Z_5$}&$(2,3,6,5,4)(8,9,12,11,10)(13,14,15,16,25)(17,27,24,23,18)(19,21,22,26,20)$ \\[1em]
\midrule\\[-.5em]
\multirow{2}{*}{$S_4$}&$(1,4)(2,5)(3,6)(7,10)(8,11)(9,12)(13,23)(14,22)(15,20)(16,18)(17,24)(19,21)$ \\
&$(1,7)(2,8)(3,9)(4,10)(5,11)(6,12)$ \\[1em]
\multirow{2}{*}{$A_4$}&$(1,4)(2,5)(3,6)(7,10)(8,11)(9,12)(13,23)(14,22)(15,20)(16,18)(17,24)(19,21)$ \\
&$(1,7)(2,8)(3,9)(4,10)(5,11)(6,12)$ \\[1em]
\multirow{3}{*}{$D_4$}&$(2,5)(3,6)(8,11)(9,12)(13,16)(14,15)(17,24)(18,23)(19,21)(20,22)$ \\
&$(1,4)(7,10)(13,18)(14,20)(15,22)(16,23) \qquad (1,7)(2,8)(3,9)(4,10)(5,11)(6,12)$ \\[1em]
\multirow{2}{*}{$S_3$}&$(1,3,2)(4,22,6,24,5,23)(7,13,8,17,9,14)(10,11,12)(15,20,26)(16,27,18)(19,25,21)$ \\
&$(1,5)(2,6)(3,4)(7,11)(8,12)(9,10)(13,24)(14,23)(15,26)(17,22)(18,27)(21,25)$ \\[1em]
\multirow{2}{*}{$K_4$}&$(3,6)(9,12)(14,15)(17,19)(20,22)(21,24)\qquad (2,5)(8,11)(13,16)(17,21)(18,23)(19,24)$ \\
&$(1,4)(7,10)(13,18)(14,20)(15,22)(16,23)\qquad (1,7)(2,8)(3,9)(4,10)(5,11)(6,12)$ \\[1em]
\multirow{3}{*}{$\nK$}&$(3,22)(4,27)(6,20)(7,26)(8,16)(11,13)$ \\
&$(2,3)(5,6)(8,9)(11,12)(13,14)(15,16)(18,20)(19,21)(22,23)(26,27)$ \\
&$(1,4,27)(2,5)(3,14,22,6,15,20)(7,26,10)(8,18,16,11,23,13)(9,12)(17,24)(19,21)$ \\[1em]
\multirow{3}{*}{$\Z_4$}&$(2,3,5,6)(8,9,11,12)(13,14,16,15)(17,21,24,19)(18,20,23,22)(26,27)$ \\
&$(1,4)(7,10)(13,18)(14,20)(15,22)(16,23) \qquad (1,7)(2,8)(3,9)(4,10)(5,11)(6,12)$ \\[1em]
\multirow{2}{*}{$\Z_3$}&$(1,6,2,4,3,5)(7,12,8,10,9,11)(13,22,17,23,14,24)(15,19,18,20,21,16)(25,27,26)$ \\
&$(1,13,2,17,3,14)(4,5,6)(7,9,8)(10,22,12,24,11,23)(15,20,26)(16,27,18)(19,25,21)$ \\[1em]
\multirow{2}{*}{$\Z_2$}&$(1,15,12)(2,10,18)(3,16,26)(4,9,6,24,13,23)(5,8,17,22,14,7)(11,20,27)(19,25,21)$ \\
&$(1,26,4)(2,18,16,5,23,13)(3,6)(7,10,27)(8,11)(9,14,22,12,15,20)(17,24)(19,21)$ \\[1em]
\multirow{3}{*}{$\DT$}&$(3,6)(9,12)(14,15)(17,19)(20,22)(21,24) \qquad (1,4)(7,10)(13,18)(14,20)(15,22)(16,23)$ \\
&$(1,4,27,10,7,26)(2,15,8,9,23,3)(5,14,11,12,18,6)(13,20)(16,22)(19,21)$ \\[1em]
\multirow{2}{*}{$\Z_1$}&$(1,10,23,17,16,18,4,9)(2,11)(3,19,24,7,21,12,6,13)(5,15,22,27,8,20,14,25)$ \\
&$(1,7,11,15,2,23,12,13,4)(3,21,18,10,5,24,20,9,14)(6,26,17,19,27,22,8,25,16)$ \\ \\[-.5em]
\bottomrule
\end{tabular}
\caption{The monodromy groups $\Gal(\cal L_{G})$ of the subgroups $G$ of $S_5$ as permutation subgroups of the 27 lines. The cases corresponding to $S_5$, $A_5$ and $F_5$ have $\dim\mathcal{L}_G=0$. The realisations of the groups are those given in \tabref{tab:subgroups_S5} and the lines are numbered as in \secref{sec:lines}.}\label{tab:monodromy_groups}
\end{table}

Let $G$ be one of the subgroups of $S_5$ given in \tabref{tab:subgroups_S5} and let $X_b$ be the Clebsch surface in $\mathcal L_G$.
In \cite{code} we give the monodromy representation $\pi_1(\mathcal L_G\setminus \nabla_G, b)\to S_{27}$ by giving
a line $L$ of $\mathcal L_G$ containing $b$ that is generic in the sense of \thmref{thm:lefschetz_zariski};
and a set of generators of the monodromy group given as permutations of the $27$ lines of $X_b$ described in \secref{sec:lines}.
We also provide the action of $S_5$ on the same lines.

We summarise these results here by giving the actions of $G$ on the 27 lines of the Clebsch surface \eqref{eq:clebsch} in \tabref{tab:actions}, and generators of the corresponding Galois groups $\Gal(\cal L_G)$ in \tabref{tab:monodromy_groups}.

\vfill
\begin{center}
    \textit{(text continues after tables)}
\end{center}
\vfill

\pagebreak
\section{Galois groups of crystallographic K3 surfaces}\label{sec:crystallographic_quartic}

In this section, we extend the results of \secref{sec:quartic} to crystallographic K3 surfaces (see \secref{sec:crystal}).
Of the 227 crystallographic groups of dimension $4$, 111 families have singular generic fibres. 
Of the remaining 116, there are (at most\footnote{We did not try to match linear systems beyond direct comparison of the ones corresponding to the first $\Z$-class of each $\Q$-classes. We have  for example no evidence that the linear systems corresponding to 16.1 and 32.1 are not conjugate. Similarly 3.1 and 8.1 have discriminants that do not have the same component structure over $\Q$, but could be conjugate over $\Q[i]$.}) only 48 distinct linear systems up to conjugation.
We give in \tabref{tab:equivalent_crystallographic} the chosen representative for each of the 48 families. 
The corresponding families of quartics are described in \tabref{tab:quartic_crystallographic}, following the notation of \tabref{tab:quartic}.

\begin{theorem}
The statements of \thmref{thm:S5_quartics} also hold for \tabref{tab:quartic_crystallographic}.
\end{theorem}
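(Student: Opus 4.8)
The plan is to run, mutatis mutandis, the same certified pipeline that proves \thmref{thm:S5_quartics}, now for the $48$ representative linear systems of \tabref{tab:equivalent_crystallographic}. Concretely, for each such family $\calL^4_{C_N}$ I would first exhibit one smooth member, so that the generic fibre is smooth and $\nabla_{C_N}$ is a genuine hypersurface; the dimension $\dim\calL^4_{C_N}$ is then read off directly from the invariant space of quartics and is certified by construction. Next, fixing a smooth base fibre $X_b$, I would compute the period matrix $\Pi$ of $X_b$ with certified error bounds using the methods of \secref{sec:period}--\secref{sec:numerical_integration} and the \texttt{lefschetz-family} package, together with the matrices $\operatorname{Mat}_{g_*}$ of the induced action of each generator $g$ of $C_N$ on $H_2(X_b)$ via \eqref{eq:Matg}.

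With these in hand I would follow \secref{sec:identify_pic} to pin down the generic Picard lattice: \propref{prop:symplectic}, together with the symplectic/non-symplectic criterion $\det g = \pm 1$, gives a lower bound $\Lambda\subset\Pic(X_z)$ obtained from the kernels of $\operatorname{Mat}_{g_*}-I_{22}$ and their orthogonal complements, and the Lefschetz theorem on $(1,1)$-classes, applied numerically through a nonvanishing determinant of periods $\det\left(\int_{\gamma_{i,t}}\nabla_t^k\res\left(\Omega_3/f_t\right)\right)$, certifies that this bound is attained, hence $\Pic(X_z)=\Lambda$ and $\rho_{C_N}$ is certified. Picking a random line $L\subset\calL^4_{C_N}$ through $b$, computing $\nabla_{C_N}\cap L$ and a basis of the first homology of the associated Voronoi graph, and performing certified numerical analytic continuation of $\Pi$ along these loops (\secref{sec:numerical_integration}), I would obtain generators of the image of $\pi_1(L\setminus\nabla_{C_N},b)\to\operatorname{Aut}(H_2(X_b))$; restricting to $\Pic(X_z)\cap H^\perp$ and invoking \propref{prop:identifying_curves} recovers the finitely many smooth rational curves of small degree, and hence the conic count $|c_{C_N}|$ (certified), the orbit structure, the Galois group $\Gal(\calL^4_{C_N})$ and the index $\mathcal{I}_{C_N}$.

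The remaining point, and the main obstacle, is the discriminant. Where the symbolic elimination of \secref{sec:discriminants} terminates, it outputs the true $\deg\nabla_{C_N}$, which lets us check that the chosen line $L$ meets $\nabla_{C_N}$ in $\deg\nabla_{C_N}$ distinct points and thus certify, via \thmref{thm:lefschetz_zariski}, that $L$ is generic; that row is then fully certified. Where the elimination does not finish in reasonable time, exactly as in \thmref{thm:S5_quartics} one only reports the discriminant of the restriction of the family to the random line $L$, and the dichotomy of \thmref{thm:S5_quartics} applies verbatim: if $L$ happens to be generic the row is correct, and otherwise the displayed Galois group is that of the subsystem cut out by $L$ while the true Galois group of $\calL^4_{C_N}$ is a strict overgroup. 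I expect the difficulty here to be the same as in the quartic case of \secref{sec:quartic} --- controlling the size of the Picard--Fuchs operators, which requires a good generating set of algebraic $2$-forms so that the Gauss--Manin connection can be integrated, together with the cost of the elimination, now aggravated by the larger number of families --- but no new theoretical ingredient is needed, so the statement follows by the same certified computation, with the data recorded at \cite{code}.
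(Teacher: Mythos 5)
Your plan matches the paper's own (implicit) proof: the paper proves this theorem by running exactly the pipeline of Theorem~\ref{thm:S5_quartics} — certified periods and group action to pin down $\Pic(X_z)$ via \secref{sec:identify_pic}, monodromy along a random line via \secref{sec:monodromy}, curves via \propref{prop:identifying_curves}, and the same certified/uncertified dichotomy for $\deg\nabla_{C_N}$ depending on whether the discriminant computation of \secref{sec:discriminants} terminates — applied to the 48 representative families of \tabref{tab:equivalent_crystallographic}, with the data at \cite{code}. So the proposal is correct and takes essentially the same approach.
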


\begin{table}[h]
\scriptsize
\centering
\makebox[\linewidth][c]{
\setlength{\tabcolsep}{4.7pt}
\begin{tabular}{ccc}
\toprule
$N$ & \thead{Other groups giving \\the same linear system} & \thead{Matching group\\ in \tabref{tab:quartic}} \\\midrule
1.1 & 1.2 & $\Z_1$ \\
2.1 & 2.2, 2.3 & $\Z_2$ \\
3.1 & 3.2 & $\DT$ \\
4.1 & 4.2, 4.3, 4.4 & $\nK$ \\
5.1 & 5.2 & $K_4$ \\
6.1 & 6.2, 6.3 & -- \\
7.1 & 7.2, 7.3 & -- \\
7.4 & 7.5, 7.6, 7.7 & -- \\
8.1 & 8.2 & $\Z_3$ \\
8.3 & 8.4, 8.5 & $S_3$ \\
10.1 &  --  & -- \\
12.1 & 12.2 & $\Z_4$ \\
12.3 & 12.4, 12.5 & $D_4$ \\
13.1 & 13.2, 13.5 & -- \\
13.3 & 13.4, 13.6, 13.7, 13.8, 13.9, 13.10 & -- \\
14.1 & 14.2, 14.4 & $\Z_6$ \\
14.3 & 14.5 & $\tS$ \\
14.6 & 14.7, 14.8, 14.9, 14.10 & $D_6$ \\
16.1 &  --  & -- \\
18.1 &  --  & -- \\
18.2 &  --  & -- \\
18.3 & 18.5 & -- \\
18.4 &  --  & -- \\
19.1 & 19.3 & -- \\
19.2 &  --  & -- \\
\bottomrule
\end{tabular}
\quad 
\begin{tabular}{ccc}
\toprule
$N$ & \thead{Other groups giving \\the same linear system}& \thead{Matching group\\ in \tabref{tab:quartic}}  \\\midrule
19.4 & 19.5, 19.6 & -- \\
24.1 & 24.2, 24.3, 24.4, 24.5 & $S_4$ and $A_4$ \\
\multirow{2}{*}{25.1} & 25.2, 25.3, 25.4, 25.5, 25.6, & \multirow{2}{*}{--} \\
& 25.7, 25.8, 25.9, 25.10, 25.11   \\
26.1 &  --  & -- \\
26.2 &  --  & -- \\
27.1 & 27.2 & $\Z_5$ \\
27.3 & 27.4 & $D_5$ \\
31.1 &  -- & $F_5$   \\
31.2 &  -- & $F_5$   \\
31.3 & 31.4, 31.5, 31.6, 31.7 & $S_5$ and $A_5$ \\
32.1 &  --  & -- \\
32.2 &  --  & -- \\
32.3 &  --  & -- \\
32.4 &  --  & -- \\
32.5 &  --  & -- \\
32.6 &  --  & -- \\
32.7 & 32.12, 32.13, 32.15, 32.17 & -- \\
32.8 &  --  & -- \\
32.9 & 32.14 & -- \\
32.10 &  --  & $\mathcal H$ \\
32.11 &  --  & -- \\
32.16 & 32.19 & -- \\
32.18 & 32.20, 32.21 & -- \\\\
\bottomrule
\end{tabular}
}
\caption{Equivalent families of smooth crystallographic quartic surfaces.}
\label{tab:equivalent_crystallographic}
\end{table}

Note that crystallographic groups do not always act faithfully on $\C[x_0,x_1,x_2,x_3]_4$.
For example the action of the linear automorphism $(x_0:x_1:x_2:x_3)\mapsto (-x_0:-x_1:-x_2:-x_3)$ fixes every monomial of degree 4.
In the column labeled ``$C_N$'', the group that is given is the faithful part, i.e., the image of the group action inside the group of automorphisms of a generic surface in $\mathcal L_{C_N}$.
In the column ``$\operatorname{Symp}(C_N)$'', we give the subgroup of symplectic automorphisms of this action.

We end by remarking that contrary to the case of symmetric quartic surfaces, we are able to compute a majority of discriminants of crystallographic families of quartic surfaces.

\begin{table}[htp]
\scriptsize
\centering
\makebox[\linewidth][c]{
\setlength{\tabcolsep}{3.5pt}
\begin{tabular}{cccccccccccc}
\toprule
$N$ & $C_N$ & $\operatorname{Symp}(C_N)$ & $n$ & $\rho$& ${\rm Gal}({\calL}^4_{C_N})$ &$|{\rm Gal}({\calL}^4_{C_N})|$& $|c_{C_N}|$ & Orbit structure  & $\mathcal{I}_{C_N}$ & \multicolumn{2}{c}{$\deg \nabla_{C_N}$}  \\ \midrule
32.18 & \href{https://www.lmfdb.org/Groups/Abstract/96.70}{$\mathbb Z_2^3\rtimes A_4$} & \href{https://www.lmfdb.org/Groups/Abstract/48.50}{$\mathbb Z_2^2\rtimes A_4$} & 1 & 19 & \href{https://www.lmfdb.org/Groups/Abstract/16.14}{$\Z_2^4$} & $ 16 $ & 512 & $2^{16},4^{24},8^{48}$ & 2304 & $1^{5}$ \\[.2em]
32.16 & \href{https://www.lmfdb.org/Groups/Abstract/48.50}{$\mathbb Z_2^2\rtimes A_4$} & \href{https://www.lmfdb.org/Groups/Abstract/48.50}{$\mathbb Z_2^2\rtimes A_4$} & 2 & 18 & \href{https://www.lmfdb.org/Groups/Abstract/32.51}{$\Z_2^2\rtimes \Z_8$} & $ 32 $ & 464 & $2^{16},8^{54}$ & 11520 & $1^{7},3^{1}$ \\[.2em]
32.11 & \href{https://www.lmfdb.org/Groups/Abstract/24.12}{$S_4$} & \href{https://www.lmfdb.org/Groups/Abstract/24.12}{$S_4$} & 2 & 18 & \href{https://www.lmfdb.org/Groups/Abstract/384.20051}{$\operatorname{GL}_2(\Z_4)\rtimes \Z_2^2$} & $ 384 $ & 0 & $0^1$ & 48 & $1^{2},2^{2},4^{1}$ \\[.2em]
32.10 & \href{https://www.lmfdb.org/Groups/Abstract/16.14}{$\mathbb Z_2^4$} & \href{https://www.lmfdb.org/Groups/Abstract/16.14}{$\mathbb Z_2^4$} & 4 & 16 & \href{https://www.lmfdb.org/Groups/Abstract/1024.djt}{$\Z_2^{10}$} & $ 1024 $ & 320 & $32^{10}$ & 40320 & $1^{15},3^{1}$ \\[.2em]
32.9 & \href{https://www.lmfdb.org/Groups/Abstract/16.3}{$\mathbb Z_2^2\rtimes\mathbb Z_4$} & \href{https://www.lmfdb.org/Groups/Abstract/8.5}{$\mathbb Z_2^3$} & 3 & 17 & \href{https://www.lmfdb.org/Groups/Abstract/256.56092}{$\Z_2^8$} & $ 256 $ & 368 & $8^{22},32^{6}$ & 5760 & $1^{11},3^{1}$ \\[.2em]
32.8 & \href{https://www.lmfdb.org/Groups/Abstract/16.3}{$\mathbb Z_2^2\rtimes\mathbb Z_4$} & \href{https://www.lmfdb.org/Groups/Abstract/16.3}{$\mathbb Z_2^2\rtimes\mathbb Z_4$} & 3 & 18 & \href{https://www.lmfdb.org/Groups/Abstract/256.26991}{$\Z_2^6\rtimes\Z_4$} & $ 256 $ & 416 & $16^{6},32^{8},64^{1}$ & 768 & $2^{5}$ \\[.2em]
32.7 & \href{https://www.lmfdb.org/Groups/Abstract/16.3}{$\mathbb Z_2^2\rtimes\mathbb Z_4$} & \href{https://www.lmfdb.org/Groups/Abstract/16.3}{$\mathbb Z_2^2\rtimes\mathbb Z_4$} & 2 & 18 & \href{https://www.lmfdb.org/Groups/Abstract/64.267}{$\Z_2^6$} & $ 64 $ & 416 & $4^{16},8^{28},32^{4}$ & 3072 & $1^{7},2^{1}$ \\[.2em]
32.6 & \href{https://www.lmfdb.org/Groups/Abstract/16.11}{$\mathbb Z_2\times D_4$} & \href{https://www.lmfdb.org/Groups/Abstract/16.11}{$\mathbb Z_2\times D_4$} & 3 & 17 & $\Z_2\times\href{https://www.lmfdb.org/Groups/Abstract/256.27633}{\Z_2^5\rtimes D_4}$ & $ 512 $ & 256 & $32^{4},64^{2}$ & 2048 & $1^{3},2^{5}$ \\[.2em]
32.5 & \href{https://www.lmfdb.org/Groups/Abstract/12.3}{$A_4$} & \href{https://www.lmfdb.org/Groups/Abstract/12.3}{$A_4$} & 4 & 17 & \href{https://www.lmfdb.org/Groups/Abstract/4608.bq}{$\Z_2^2\rtimes O^+_4(\Z_3)$} & $ 4608 $ & 0 & $0^1$ & 24 & \textcolor{gray}{$4^{1},6^{2}$}&\textcolor{gray}{(?)} \\[.2em]
32.4 & \href{https://www.lmfdb.org/Groups/Abstract/8.5}{$\mathbb Z_2^3$} & \href{https://www.lmfdb.org/Groups/Abstract/8.5}{$\mathbb Z_2^3$} & 6 & 15 & -- & $ 16384 $ & 192 & $64^{3}$ & 2688 & $2^{9},6^{1}$ \\[.2em]
32.3 & \href{https://www.lmfdb.org/Groups/Abstract/8.3}{$D_4$} & \href{https://www.lmfdb.org/Groups/Abstract/8.3}{$D_4$} & 5 & 16 & -- & $ 73728 $ & 0 & $0^1$ & 16 & $2^{1},6^{3}$ \\[.2em]
32.2 & \href{https://www.lmfdb.org/Groups/Abstract/8.2}{$\mathbb Z_2\times\mathbb Z_4$} & \href{https://www.lmfdb.org/Groups/Abstract/8.2}{$\mathbb Z_2\times\mathbb Z_4$} & 4 & 17 & -- & $ 1024 $ & 256 & $64^{4}$ & 1024 & $2^{7}$ \\[.2em]
32.1 & \href{https://www.lmfdb.org/Groups/Abstract/4.2}{$\mathbb Z_2^2$} & \href{https://www.lmfdb.org/Groups/Abstract/4.2}{$\mathbb Z_2^2$} & 10 & 13 & -- & $ 42467328 $ & 0 & $0^1$ & 6 & \textcolor{gray}{$6^{3},18^{1}$}&\textcolor{gray}{(?)} \\[.2em]
\textbf{31.3} & \href{https://www.lmfdb.org/Groups/Abstract/60.5}{$A_5$} & \href{https://www.lmfdb.org/Groups/Abstract/60.5}{$A_5$} & 1 & 19 & \href{https://www.lmfdb.org/Groups/Abstract/24.14}{$\Z_2\times D_6$} & $ 24 $ & 560 & $2^{10},4^{15},12^{40}$ & 360 & $1^{5}$ \\[.2em]
\textbf{31.2} & \href{https://www.lmfdb.org/Groups/Abstract/20.3}{$F_5$} & \href{https://www.lmfdb.org/Groups/Abstract/10.1}{$D_5$} & 2 & 18 & \href{https://www.lmfdb.org/Groups/Abstract/240.189}{$\Z_2\times S_5$} & $ 240 $ & 400 & $40^{10}$ & 20 & $1^{3},2^{3}$ \\[.2em]
\textbf{31.1} & \href{https://www.lmfdb.org/Groups/Abstract/20.3}{$F_5$} & \href{https://www.lmfdb.org/Groups/Abstract/10.1}{$D_5$} & 2 & 18 & \href{https://www.lmfdb.org/Groups/Abstract/240.189}{$\Z_2\times S_5$} & $ 240 $ & 400 & $40^{10}$ & 20 & $1^{3},2^{3}$ \\[.2em]
\textbf{27.3} & \href{https://www.lmfdb.org/Groups/Abstract/10.1}{$D_5$} & \href{https://www.lmfdb.org/Groups/Abstract/10.1}{$D_5$} & 4 & 17 & -- & $ 14400 $ & 0 & $0^1$ & 20 & \textcolor{gray}{$2^{1},6^{2}$}&\textcolor{gray}{(?)} \\[.2em]
\textbf{27.1} & \href{https://www.lmfdb.org/Groups/Abstract/5.1}{$\Z_5$} & \href{https://www.lmfdb.org/Groups/Abstract/5.1}{$\Z_5$} & 6 & 17  & -- & $72000$ & 0 & $0^1$ & $4$ & \textcolor{gray}{$4^{1},12^{1}$}&\textcolor{gray}{(?)} \\[.2em]
26.2 & \href{https://www.lmfdb.org/Groups/Abstract/8.3}{$D_4$} & \href{https://www.lmfdb.org/Groups/Abstract/8.3}{$D_4$} & 5 & 16 & -- & $ 73728 $ & 0 & $0^1$ & 16 & $2^{1},6^{3}$ \\[.2em]
26.1 & \href{https://www.lmfdb.org/Groups/Abstract/4.1}{$\mathbb Z_4$} & \href{https://www.lmfdb.org/Groups/Abstract/4.1}{$\mathbb Z_4$} & 8 & 15 & -- & $ 1474560 $ & 0 & $0^1$ & 4 & \textcolor{gray}{$4^{1},24^{1}$}&\textcolor{gray}{(?)} \\[.2em]
25.1 & \href{https://www.lmfdb.org/Groups/Abstract/24.13}{$\mathbb Z_2\times A_4$} & \href{https://www.lmfdb.org/Groups/Abstract/12.3}{$A_4$} & 3 & 18 & \href{https://www.lmfdb.org/Groups/Abstract/256.55683}{$\Z_2^2\times D_4^2$} & $ 256 $ & 392 & $2^{4},4^{6},8^{15},16^{3},32^{6}$ & 72 & $1^{5},2^{3}$ \\[.2em]
\textbf{24.1} & \href{https://www.lmfdb.org/Groups/Abstract/12.3}{$A_4$} & \href{https://www.lmfdb.org/Groups/Abstract/12.3}{$A_4$} & 4 & 17 & \href{https://www.lmfdb.org/Groups/Abstract/384.20051}{$\operatorname{GL}_2(\Z_4)\rtimes \Z_2^2$} & $ 384 $ & 344 & $2^{4},8^{3},12^{6},32^{6},48^{1}$ & 288 & \textcolor{gray}{$1^{4},2^{1},5^{2}$}&\textcolor{gray}{(?)} \\[.2em]
19.4 & \href{https://www.lmfdb.org/Groups/Abstract/16.13}{$D_4\rtimes\mathbb Z_2$} & \href{https://www.lmfdb.org/Groups/Abstract/8.3}{$D_4$} & 4 & 17 & \href{https://www.lmfdb.org/Groups/Abstract/1024.djk}{$\Z_2^7\times D_4$} & $ 1024 $ & 288 & $8^{20},16^{8}$ & 1024 & $1^{6},2^{4}$ \\[.2em]
19.2 & \href{https://www.lmfdb.org/Groups/Abstract/8.2}{$\mathbb Z_2\times\mathbb Z_4$} & \href{https://www.lmfdb.org/Groups/Abstract/8.2}{$\mathbb Z_2\times\mathbb Z_4$} & 6 & 17 & -- & $ 16384 $ & 288 & $16^{2},64^{2},128^{1}$ & 64 & $2^{4},8^{1}$ \\[.2em]
19.1 & \href{https://www.lmfdb.org/Groups/Abstract/8.4}{$Q_8$} & \href{https://www.lmfdb.org/Groups/Abstract/4.1}{$\mathbb Z_4$} & 5 & 17 & -- & $ 4096 $ & 288 & $8^{2},16^{5},32^{4},64^{1}$ & 256 & $1^{3},2^{2},4^{2}$ \\[.2em]
18.4 & \href{https://www.lmfdb.org/Groups/Abstract/8.5}{$\mathbb Z_2^3$} & \href{https://www.lmfdb.org/Groups/Abstract/8.5}{$\mathbb Z_2^3$} & 6 & 15 & -- & $ 16384 $ & 192 & $64^{3}$ & 2688 & $1^{6},2^{6},6^{1}$ \\[.2em]
18.3 & \href{https://www.lmfdb.org/Groups/Abstract/8.2}{$\mathbb Z_2\times\mathbb Z_4$} & \href{https://www.lmfdb.org/Groups/Abstract/4.2}{$\mathbb Z_2^2$} & 5 & 16 & -- & $ 4096 $ & 240 & $8^{6},16^{8},64^{1}$ & 192 & $1^{6},2^{4},3^{2}$ \\[.2em]
18.2 & \href{https://www.lmfdb.org/Groups/Abstract/8.3}{$D_4$} & \href{https://www.lmfdb.org/Groups/Abstract/4.2}{$\mathbb Z_2^2$} & 7 & 15 & -- & $ 24576 $ & 200 & $8^{1},16^{2},32^{2},96^{1}$ & 16 & \textcolor{gray}{$1^{3},2^{2},4^{1},5^{1},10^{1}$}&\textcolor{gray}{(?)} \\[.2em]
18.1 & \href{https://www.lmfdb.org/Groups/Abstract/4.2}{$\mathbb Z_2^2$} & \href{https://www.lmfdb.org/Groups/Abstract/4.2}{$\mathbb Z_2^2$} & 10 & 13 & -- & $ 393216 $ & 128 & $128^{1}$ & 648 & \textcolor{gray}{$2^{4},4^{2},20^{1}$}&\textcolor{gray}{(?)} \\[.2em]
16.1 & \href{https://www.lmfdb.org/Groups/Abstract/4.2}{$\mathbb Z_2^2$} & \href{https://www.lmfdb.org/Groups/Abstract/4.2}{$\mathbb Z_2^2$} & 10 & 13 & -- & $ 42467328 $ & 0 & $0^1$ & 6 & \textcolor{gray}{$6^{3},18^{1}$}&\textcolor{gray}{(?)} \\[.2em]
\textbf{14.6} & \href{https://www.lmfdb.org/Groups/Abstract/12.4}{$D_6$} & \href{https://www.lmfdb.org/Groups/Abstract/6.1}{$S_3$} & 6 & 16 &  \href{https://www.lmfdb.org/Groups/Abstract/13824.o}{$D_6\cdot W(F_4)$} & $ 13824 $ & 200 & $2^{1},6^{3},36^{1},48^{3}$ & 6 & $1^{4},2^{2},5^{1},8^{1}$ \\[.2em]
\textbf{14.3} & \href{https://www.lmfdb.org/Groups/Abstract/6.1}{$S_3$} & \href{https://www.lmfdb.org/Groups/Abstract/6.1}{$S_3$} & 7 & 15 & -- & $ 622080 $ & 0 & $0^1$ & 6 & \textcolor{gray}{$1^{2},2^{1},5^{2},12^{1}$}&\textcolor{gray}{(?)} \\[.2em]
\textbf{14.1} & \href{https://www.lmfdb.org/Groups/Abstract/6.2}{$\mathbb Z_6$} & \href{https://www.lmfdb.org/Groups/Abstract/3.1}{$\mathbb Z_3$} & 7 & 16 & -- & $ 41472 $ & 200 & $2^{1},18^{1},36^{1},144^{1}$ & 2 & $1^{3},2^{3},5^{1},8^{1}$ \\[.2em]
13.3 & \href{https://www.lmfdb.org/Groups/Abstract/8.3}{$D_4$} & \href{https://www.lmfdb.org/Groups/Abstract/8.3}{$D_4$} & 6 & 16 & -- & $ 8192 $ & 248 & $8^{11},16^{2},32^{4}$ & 144 & $1^{5},2^{5},3^{2}$ \\[.2em]
13.1 & \href{https://www.lmfdb.org/Groups/Abstract/8.2}{$\mathbb Z_2\times\mathbb Z_4$} & \href{https://www.lmfdb.org/Groups/Abstract/4.1}{$\mathbb Z_4$} & 7 & 16 & -- & $ 32768 $ & 248 & $8^{1},16^{3},32^{2},128^{1}$ & 36 & $1^{2},2^{3},4^{2},6^{1}$ \\[.2em]
\textbf{12.3} & \href{https://www.lmfdb.org/Groups/Abstract/8.3}{$D_4$} & \href{https://www.lmfdb.org/Groups/Abstract/4.2}{$\mathbb Z_2^2$} & 7 & 15 & -- & $ 24576 $ & 200 & $8^{1},16^{2},32^{2},96^{1}$ & 16 & \textcolor{gray}{$1^{5},2^{3},5^{1},10^{1}$}&\textcolor{gray}{(?)} \\[.2em]
\textbf{12.1} & \href{https://www.lmfdb.org/Groups/Abstract/4.1}{$\mathbb Z_4$} & \href{https://www.lmfdb.org/Groups/Abstract/2.1}{$\mathbb Z_2$} & 9 & 14 & -- & $ 245760 $ & 160 & $160^{1}$ & 2 & \textcolor{gray}{$1^{2},2^{3},24^{1}$}&\textcolor{gray}{(?)} \\[.2em]
10.1 & \href{https://www.lmfdb.org/Groups/Abstract/2.1}{$\Z_2$} & \href{https://www.lmfdb.org/Groups/Abstract/2.1}{$\Z_2$} & 18 & 9  & $W(E_8)$ & 696729600 & 0 & $0^1$ & $1$ &  \textcolor{gray}{$12^{1},48^{1}$}&\textcolor{gray}{(?)} \\[.2em]
\textbf{8.3} & \href{https://www.lmfdb.org/Groups/Abstract/6.1}{$S_3$} & \href{https://www.lmfdb.org/Groups/Abstract/3.1}{$\mathbb Z_3$} & 10 & 14 & -- & $ 103680 $ & 164 & $2^{1},54^{3}$ & 6 & \textcolor{gray}{$1^{1},6^{2},21^{1}$}&\textcolor{gray}{(?)} \\[.2em]
\textbf{8.1} & \href{https://www.lmfdb.org/Groups/Abstract/3.1}{$\Z_3$} & \href{https://www.lmfdb.org/Groups/Abstract/3.1}{$\Z_3$} & 12 & 13  & -- & $39191040$ & 0 & $0^1$ & $2$ & \textcolor{gray}{$6^{1},34^{1}$}&\textcolor{gray}{(?)} \\[.2em]
7.4 & \href{https://www.lmfdb.org/Groups/Abstract/8.3}{$D_4$} & \href{https://www.lmfdb.org/Groups/Abstract/4.1}{$\mathbb Z_4$} & 9 & 15 & -- & $ 73728 $ & 208 & $8^{2},48^{4}$ & 80 & $1^{3},6^{1},10^{2}$ \\[.2em]
7.1 & \href{https://www.lmfdb.org/Groups/Abstract/4.1}{$\mathbb Z_4$} & \href{https://www.lmfdb.org/Groups/Abstract/4.1}{$\mathbb Z_4$} & 10 & 15 & -- & $ 294912 $ & 208 & $16^{1},192^{1}$ & 20 & \textcolor{gray}{$2^{2},6^{1},20^{1}$}&\textcolor{gray}{(?)} \\[.2em]
6.1 & \href{https://www.lmfdb.org/Groups/Abstract/8.5}{$\mathbb Z_2^3$} & \href{https://www.lmfdb.org/Groups/Abstract/4.2}{$\mathbb Z_2^2$} & 9 & 14 & -- & $ 65536 $ & 176 & $8^{6},32^{4}$ & 72 & $1^{4},2^{6},3^{4},4^{1}$ \\[.2em]
\textbf{5.1} & \href{https://www.lmfdb.org/Groups/Abstract/4.2}{$\mathbb Z_2^2$} & \href{https://www.lmfdb.org/Groups/Abstract/4.2}{$\mathbb Z_2^2$} & 10 & 13 & -- & $ 393216 $ & 128 & $128^{1}$ & 648 & \textcolor{gray}{$1^{4},2^{6},20^{1}$}&\textcolor{gray}{(?)} \\[.2em]
\textbf{4.1} & \href{https://www.lmfdb.org/Groups/Abstract/4.2}{$\mathbb Z_2^2$} & \href{https://www.lmfdb.org/Groups/Abstract/2.1}{$\mathbb Z_2$} & 13 & 12 & -- & $ 1769472 $ & 104 & $8^{1},48^{2}$ & 2 & \textcolor{gray}{$1^{2},2^{1},6^{1},10^{2},14^{1}$}&\textcolor{gray}{(?)} \\[.2em]
\textbf{3.1} & \href{https://www.lmfdb.org/Groups/Abstract/2.1}{$\Z_2$} & \href{https://www.lmfdb.org/Groups/Abstract/2.1}{$\Z_2$} & 18 & 9  & $W(E_8)$ & $696729600$ & 0 & $0^1$ & 1 & \textcolor{gray}{$6^{2},48^{1}$}&\textcolor{gray}{(?)} \\[.2em]
\textbf{2.1} & \href{https://www.lmfdb.org/Groups/Abstract/2.1}{$\Z_2$} & \href{https://www.lmfdb.org/Groups/Abstract/1.1}{$\Z_1$}& 21 & 8 &  $W(E_7)$ & 2903040 & 56 &$56^{1}$ & 1 & \textcolor{gray}{$1^{1},27^{1},40^{1}$}&\textcolor{gray}{(?)} \\[.2em]
\textbf{1.1} & \href{https://www.lmfdb.org/Groups/Abstract/1.1}{$\Z_1$} & \href{https://www.lmfdb.org/Groups/Abstract/1.1}{$\Z_1$} & 34 & 1 &  \href{https://www.lmfdb.org/Groups/Abstract/1.1}{$\Z_1$} & 1 & 0 & $0^{1}$ & 1 & $108^1$ \\[.2em]
\bottomrule 
\end{tabular}
}
\caption{Galois groups, conics and discriminant degrees of crystallographic quartic surfaces. When the groups could be identified in the LMFDB group database \cite{lmfdb}, a link is provided.}
\label{tab:quartic_crystallographic}
\end{table}

\clearpage
\small
\bibliographystyle{abbrv}
\bibliography{references}

\noindent{\bf Authors' addresses:}  \\

\noindent Eric Pichon-Pharabod, MPI-MiS Leipzig
\hfill {\tt eric.pichon@mis.mpg.de}

\noindent Simon Telen, MPI-MiS Leipzig
\hfill {\tt simon.telen@mis.mpg.de}

\end{document}